\RequirePackage{etex}

\documentclass[12pt,a4paper]{amsart}

\usepackage[a4paper, total={6in, 23.7cm}]{geometry}
\usepackage{amsmath,amsthm,amsfonts,amssymb,mathrsfs,amsbsy}
\usepackage{graphicx}
\usepackage{color}
\usepackage[noadjust]{cite}
\usepackage{marginnote}
\usepackage{xifthen}
\usepackage{mathtools}
\usepackage{dsfont}
\usepackage{bbm}
\usepackage{appendix}
\usepackage{delarray}
\usepackage{enumerate}
\usepackage{enumitem}
\usepackage{mathabx}

\usepackage{xcolor}
\usepackage{tikz}
\usepackage[hypertexnames=false]{hyperref} 

\hypersetup{
    colorlinks=true,
    linkcolor=blue,
    filecolor=magenta,      
    urlcolor=cyan,
    pdftitle={Overleaf Example},
    pdfpagemode=FullScreen,
    }

\mathtoolsset{showonlyrefs=true}

\binoppenalty=\maxdimen
\relpenalty=\maxdimen

\linespread{1.01}


\newenvironment{nalign}
{\begin{equation}\begin{aligned}}
		{\end{aligned}\end{equation}\ignorespacesafterend}

\newtheorem{theorem}{Theorem}
\newtheorem{corollary}[theorem]{Corollary}

\newtheorem{definition}[theorem]{Definition}

\newtheorem{lemma}[theorem]{Lemma}
\newtheorem{proposition}[theorem]{Proposition}

\numberwithin{equation}{section}
\numberwithin{theorem}{section}
\numberwithin{figure}{section}

\theoremstyle{remark}
\newtheorem{remark}[theorem]{Remark}


\newcommand\No[1]{\left\| #1 \right\|_1}
\newcommand\Ni[1]{\left\| #1 \right\|_\infty}


\newcommand\R{\mathbb{R}}

\newcommand\NN{\mathbb{N}}

\newcommand\dx{\,\text{d}x}
\newcommand\dw{\,\text{d}w}
\newcommand\ds{\,\text{d}s}

\newcommand\dt{\,\text{d}t}
\newcommand\dy{\,\text{d}y}

\newcommand{\W}[2]
{{
		W^{#1,#2}_
		{
			\ifthenelse
			{
				\equal{#1}{2}
			}
			{
				\nu
			}
			{
			}
		}(\Omega)
}}

\newcommand{\Li}{{L^\infty(\R)}}

\newcommand{\Lp}[1]
{{
		L^
		{
			\ifthenelse
			{
				\isempty{#1}
			}
			{
				p
			}
			{
				#1
			}
		}(\R)
}}

\definecolor{lime}{HTML}{A6CE39}
\DeclareRobustCommand{\orcidicon}{%
	\begin{tikzpicture}
		\draw[lime, fill=lime] (0,0) 
		circle [radius=0.16] 
		node[white] {{\fontfamily{qag}\selectfont \tiny ID}};
		\draw[white, fill=white] (-0.0625,0.095) 
		circle [radius=0.007];
	\end{tikzpicture}
	\hspace{-2mm}
}

\definecolor{lime}{HTML}{A6CE39}
\DeclareRobustCommand{\orcidicon}{%
	\begin{tikzpicture}
		\draw[lime, fill=lime] (0,0) 
		circle [radius=0.16] 
		node[white] {{\fontfamily{qag}\selectfont \tiny ID}};
		\draw[white, fill=white] (-0.0625,0.095) 
		circle [radius=0.007];
	\end{tikzpicture}
	\hspace{-2mm}
}

\setlength{\tabcolsep}{2pt}

\begin{document} 

\title[Pressureless Euler alignment system]{Solutions at vacuum and rarefaction waves \\ in pressureless Euler alignment system}

\author[S. Cygan]{Szymon Cygan \href{https://orcid.org/0000-0002-8601-829X}{\orcidicon}}
\address[S. Cygan]{	Instytut Matematyczny, Uniwersytet Wroc\l{}awski, pl. Grunwaldzki 2/4, \hbox{50-384} Wroc\l{}aw, Poland \\ 
\href{https://orcid.org/0000-0002-8601-829X}{orcid.org/0000-0002-8601-829X}}
\email{szymon.cygan@math.uni.wroc.pl}
\urladdr {http://www.math.uni.wroc.pl/~scygan}

\author[G. Karch]{Grzegorz Karch \href{https://orcid.org/0000-0001-9390-5578}{\orcidicon}}
\address[G. Karch]{	
Instytut Matematyczny, Uniwersytet Wroc\l{}awski, pl. Grunwaldzki 2/4, \hbox{50-384} Wroc\l{}aw, Poland \\ 
\href{https://orcid.org/0000-0001-9390-5578}{orcid.org/0000-0001-9390-5578}}
\email{grzegorz.karch@math.uni.wroc.pl}
\urladdr {http://www.math.uni.wroc.pl/~karch}

\date{\today}

\begin{abstract}
We construct global-in-time weak solutions to the pressureless Euler alignment system posed on the whole line and supplemented with 
initial conditions, where an initial density is an arbitrary, nonnegative, bounded, and integrable function (hence density at  vacuum is allowed)
and the corresponding initial velocity is determined by certain inequalities.
Moreover, our setting covers the case where solutions to the pressureless Euler alignment system are known to be non-smooth.

We also study an asymptotic behavior of constructed solutions and we show that, under a suitable rescaling, the density looks 
 like a uniform distribution on a bounded, time dependent, expanding-in-time  interval and 
 the corresponding velocity approaches a rarefaction wave ({\it i.e.}~the well-known explicit solution to the inviscid Burgers equation).
\end{abstract}

\thanks{The authors acknowledge the helpful comments on this work by Jan Peszek, Changhui Tan and Bartosz Wr\'oblewski.}
\subjclass{35Q31; 35B65; 35R11; 76N10; 82C22}

\keywords{Pressureless Euler alignment system, global-in-time weak solutions, solutions at vacuum, rarefaction waves}

\maketitle

\section{Introduction} 
\subsection*{Main results of this work} 
We study properties of global-in-time  solutions to  the following  one dimensional (pressureless)
Euler alignment system
\begin{nalign}
    \label{eq:URhoPhi}
        \begin{aligned}
        \rho_t + (\rho u )_x &= 0,\\
        u_t+ uu_x &= \int_\R \varphi(y-x) \big( u(y,t) - u(x,t)\big) \rho (y,t) \dy, 
        \end{aligned}
    \qquad x\in\R,  \quad t>0,
\end{nalign}
with the singular interaction kernel
\begin{nalign}\label{eq:varphi}
\varphi(x) = \frac{1}{\smash{|x|}^{1+\alpha}}\qquad  \text{for some}\quad  \alpha \in (0,1)
\end{nalign}
which describes the evolution of the density $\rho=\rho(x,t)$ and the corresponding velocity $u=u(x,t)$.
Notice that the right-hand side of  second equation in system \eqref{eq:URhoPhi} with singular kernel \eqref{eq:varphi} resembles the fractional Laplacian defined by the formula
\begin{nalign}
    \label{eq:frac}
    \Lambda^\alpha f(x)= \int_{\mathbb{R}} \frac{f(x)-f(y)}{|y-x|^{\alpha+1}}  \dy,
\end{nalign}  
which we often use in this work. 
We supplement this system with initial conditions 
\begin{nalign} \label{ini:0}
    \rho(\cdot,0)= \rho_0 \in \Li \cap \Lp{1},  \qquad   u(\cdot,0)= u_0 \in \Li
\end{nalign}
and under the assumption that there exists a constant $a>0$ such that 
\begin{nalign} \label{ini:G0rho0}
    0\leqslant G_0(x)\equiv (u_0)_x(x)-\Lambda^\alpha \rho_0(x) \leqslant a \rho_0(x) \quad \text{a.e.~in $x\in\R$,}
\end{nalign}
 we construct a global-in-time weak solution 
 $$u\in L^\infty( \R\times [0,\infty)), \qquad \rho \in L^\infty\big([0,\infty), \Lp{1} \cap \Li\big)$$
 to initial value problem \eqref{eq:URhoPhi}-\eqref{ini:0}
(see Theorem \ref{thm:GlobExist}, below).

Then, in Theorem \ref{thm:asymp},  we study an asymptotic behavior of constructed solutions.
Assuming, moreover,  that there are  constants $a>0$, $b>0$ such that 
\begin{nalign} \label{ini:rho0Gorho0}
    0\leqslant b \rho_0(x)\leqslant G_0(x) \leqslant a \rho_0(x) \quad \text{a.e.~in $x\in\R$}
\end{nalign}
we show
that 
 \begin{nalign}
        \lambda \rho(\lambda x, \lambda t) \xrightarrow{\lambda \to \infty} \overline\rho(x,t) = \frac{M_\rho}{M_G}
 \begin{cases}
            0, & x\leqslant 0, \\
             \frac{1}{t}, & 0<x\leqslant M_G t, \\
            0, & x> M_G t,
        \end{cases}
    \end{nalign}
and
\begin{nalign} \label{u:ini:limit}
        u(\lambda x, \lambda t) &\xrightarrow{\lambda\to\infty} \overline u(x,t) =
         \begin{cases}
            0, & x\leqslant 0, \\
             \frac{x}{t}, & 0<x \leqslant M_G t, \\
            M_G, & x > M_G t,
        \end{cases}
    \end{nalign}
in suitable (weak) typologies and 
with the numbers
$$M_\rho=\int_\R \rho_0(x)\dx \quad \text{and} \quad  M_G =\int_\R G_0(x)\dx.$$

Let us briefly summarize how the results of this work contribute to the theory on the one-dimensional pressureless Euler alignment system \eqref{eq:URhoPhi}-\eqref{eq:varphi} posed on the whole line $\mathbb{R}$.

We have obtained global-in-time weak solutions corresponding to  initial conditions where the nonnegative density $\rho_0 \in L^1(\mathbb{R}) \cap L^\infty(\mathbb{R})$ is arbitrary and $u_0$ is determined by inequalities \eqref{ini:G0rho0} which include solutions at vacuum, {\it i.e}  initial densities satisfying  $\rho_0(x)=0$ at some $x\in \R$ are allowed. 
 In this range of initial conditions, we construct weak solutions using methods analogous to those developed for the porous medium equation with a non-vocal pressure (see equation  \eqref{eq:porous}, below). Our functional setting covers the case when solutions of system \eqref{eq:URhoPhi}-\eqref{eq:varphi} are not necessarily smooth (which is known in the case when $G_0\equiv 0$ and when system \eqref{eq:URhoPhi}-\eqref{eq:varphi} reduces to the porous medium equation with a non-local pressure, see the references below).

We also study the asymptotic behavior of the constructed solutions. 
If initial conditions satisfy inequalities \eqref{ini:rho0Gorho0}, 
after suitable rescaling of a solution to system \eqref{eq:URhoPhi}-\eqref{eq:varphi}, the density $\rho(x,t)$ looks like a uniform distribution on the interval $[0, M_G t]$ and the corresponding velocity 
$u(x,t) $ approaches 
the rarefaction wave  \eqref{u:ini:limit}  which is the well-known solution of the Burgers equation $u_t+uu_x=0$.
Here, the lower bound $  0\leqslant b \rho_0(x)\leqslant G_0(x)$ in inequalities  \eqref{ini:rho0Gorho0} with $b>0$ and with a nontrivial $\rho_0$ plays a pivotal role 
because, for $G\equiv 0$, scaling limits of solutions to system \eqref{eq:URhoPhi}-\eqref{eq:varphi} is different (see Remark \ref{rho:to:Barenblatt}, below).

In this work, we consider  system \eqref{eq:URhoPhi}
with the singular kernel \eqref{eq:varphi} with $\alpha \in (0,1)$ because only  in this range, the quantity  
$$
G\equiv u_x-\Lambda^\alpha \rho \quad 
\text{written in the form} \quad  \Lambda^{-\alpha} u_x \equiv \Lambda^{-\alpha} G+\rho 
$$
allows us to obtain  suitable compactness estimates of $u(x,t)$ in both results: on the existence of solutions and on their asymptotic behavior.

A detailed statement of these results is included in the next section.

\subsection*{Cucker–Smale model and Euler alignment system}
Now, we  briefly review the motivations for studying model \eqref{eq:URhoPhi} and we cite related references. 
We refer to the recent monograph by Shvydkoy \cite{Shvydkoy} for further details and additional references.

 Model \eqref{eq:URhoPhi}  arises as the macroscopic realisation of the Cucker and Smale agent-based model 
\begin{nalign} \label{CS:system}
    x_i' = v_i, \qquad
    mv_i' = \frac{1}{N} \sum_{j=1}^N \varphi(|x_i - x_j|)(v_j - v_i)
\end{nalign}
which describes the collective motion of $N$ individuals.
This system  with a smooth kernel $\varphi$ was first introduced by Cucker and Smale \cite{CS07a,CS07b}
 who proved that  every solution aligns
exponentially and flocks strongly, namely, it has the properties:
\begin{nalign}
\lim_{t\to\infty}|v_i-\overline v|=0 \quad\text{and}\quad 
\sup_{i,j}|x_i-x_j|\leqslant D<\infty,
\end{nalign}
see the monograph \cite{Shvydkoy} as well as the papers  
\cite{AB19,MMPZ19,MT14}
 for a survey of known results.

For a large number of agents, {\it i.e.} when $N \to \infty$,
it is more efficient to consider the
mesoscopic description of the Cucker-Smale dynamics. The corresponding
kinetic formulation of system \eqref{CS:system} can be derived formally via the BBGKY hierarchy to obtain
 the following Vlasov-type model which
describes evolution of a probability distribution $f (x, v, t)$ of agents in a phase
space $(x, v)$:
\begin{equation}\label{CS;kinetic}
\partial_t f +v\cdot\nabla  f +\lambda \nabla_v\big(fF(f)\big)=0,
\end{equation}
where
\begin{equation}
    F(f)(x,v,t)=\int_{\R^{2n}} \varphi (x, y)(w - v)f (y, w, t) \dw \dy.
\end{equation}
This kinetic alignment model was first derived by Ha and Tadmor \cite{HT08} and then studied {\it e.g.}~in  
\cite{HL09,MFRT10}.

By taking $v$-moments of equation \eqref{CS;kinetic}
and introducing the monokinetic density ansatz
concentrated at the macroscopic velocity $u$:
$$
f (x, v, t) = \rho (x, t)\delta (v - u(x, t))
$$
we obtain either the system 
\begin{align*}
    \rho_t  +\nabla \cdot (\rho u) &= 0, \\
(\rho u)_t + \nabla_x \cdot  (\rho u \otimes u) &=
\int_{\R^n}
\rho (x)\rho (y)(u(y) - u(x))\varphi (x - y) \dy,
\end{align*}
or, after formally dividing by $\rho$,
the system 
\begin{align}\label{EA:1}
    \rho_t  +\nabla \cdot (\rho u) &= 0, \\
u_t + u \cdot \nabla_x u   &=
\int_{\R^n}
\rho (y)(u(y) - u(x))\varphi (x - y) \dy,\label{EA:2}
\end{align}
which is called {\it  the pressureless Euler alignment system} and which 
describes the evolution of the density profile $\rho=\rho(x,t)$  together with the  associated velocity $u(x,t)$. 
This system was derived by Ha and Tadmor \cite{HT08} (see also \cite{KV15,FK19}), and initial results regarding the existence of solutions (either local-in-time or global-in-time) with a smooth kernel $\varphi$ and smooth, initial conditions have been presented in the works \cite{HKK14,HKK15,LS22}.

Critical threshold conditions for the global regularity of solutions in the one-dimensional version of system \eqref{EA:1}--\eqref{EA:2}, posed on the entire real line $\mathbb{R}$ with a bounded positive kernel $\varphi$, first appeared in the work \cite{TT14}. The sharp criterion was later identified in the paper \cite{CCTT16}, accompanied by the proof that global-in-time regular solutions exist if and only if the initial data are ``sub-critical" in the following sense:
\begin{equation} \label{tres}
(u_0)_x(x)+\varphi*\rho_0(x) \geq 0 \quad \text{for all} \quad x\in \mathbb{R}.
\end{equation}
On the other hand, as seen in \cite{ST17}, the one-dimensional model posed on the one-dimensional torus $\mathbb{T}$ with the singular kernel $\varphi(x)=|x|^{-1-\alpha}$ for $1\leqslant \alpha < 2$, and subject to initial data $(u_0, \rho_0) \in  
H^3(\mathbb{T}) \times H^{2+\alpha}(\mathbb{T})$, admits a global solution within the same class. Furthermore, still considering the one-dimensional model on the torus but with $\alpha \in (0,1)$, it was proven in \cite{DKRL18} that the system, supplemented with smooth initial data $(\rho_0,u_0)$ such that $\rho_0(x) > 0$ for all $x \in \mathbb{T}$, possesses a unique global smooth solution.
Other results regarding the existence and regularity of solutions to the one-dimensional model with $\alpha\in(0,2)$, on the torus and with no-vacuum initial data  are contained in \cite{ST17b,ST18,L19}. It is noteworthy that the singularity of the kernel $\varphi(x)=|x|^{-1-\alpha}$
removes the requirement \eqref{tres}. The regularity result for $\alpha \in (0,1)$, first obtained in \cite{DKRL18}, is even more surprising, because for the analogous model - the fractal Burgers equation - solutions develop shocks in finite time, as shown in \cite{ADV07}.
 
Finally, we recall recent results concerning the existence and regularity of solutions in the multidimensional case. In the work \cite{DMPW19}, the system \eqref{EA:1}--\eqref{EA:2} with $\alpha \in (1,2)$ is examined on the entire space $\mathbb{R}^n$ with $n\geqslant 1$, and global-in-time regular solutions have been constructed under a smallness assumption imposed on $(u_0, \rho_0-1)$. Then, the authors of \cite{RS20} (see also \cite{LRS22,L23} and the references therein) consider this system with $\alpha \in (0,2)$ on the $n$-dimensional torus, obtaining local-in-time regular solutions corresponding to sufficiently regular initial conditions with no vacuum  
({\it i.e.}~for $\rho_0(x)>0$ for all $x$). Global-in-time weak solutions to the forced Euler-alignment system on the torus and without vacuum (this time, under the assumption $\rho_0^{-1}\in L^\infty(\mathbb{T})$) have been constructed in \cite{L19}.

\subsection*{Porous medium equation with the  nonlocal pressure and Barenblatt self-similar profiles}
In this work, we study
the Euler alignment  system \eqref{eq:URhoPhi}-\eqref{eq:varphi} written in the   equivalent form (see comments below equations \eqref{eq:URhoG})
\begin{nalign}
        \rho_t +(\rho u)_x  = 0, \quad
        G_t + (Gu)_x  = 0, \quad
        u_x - \Lambda^\alpha\rho = G,
\end{nalign}
which for $G\equiv 0$
reduces to
 the so-called one dimensional
porous medium equation with the  nonlocal pressure 
    \begin{nalign} \label{eq:porous}
        \rho_t + (u\rho)_x = 0 \quad\text{with} \quad 
         u_x =  \Lambda^\alpha \rho \quad\text{for} \quad  x\in \R,\; t>0.
    \end{nalign}
This equation, supplemented with an initial condition
\begin{equation} \label{npm}
    \rho(\cdot,0)=\rho_0 \in L^1(\R)\cap L^\infty(\R)
\end{equation}
and motivated by the theory of dislocations, was considered by  Biler, Karch, and Monneau \cite{BKM10}, who constructed a unique global-in-time viscosity solution $\rho =\rho(x,t)$. Then, the extension of model \eqref{npm} to higher dimensions was introduced 
 by Caffarelli and V\'azquez starting in  \cite{CV11a,CV11b} (followed by several other  papers, see {\it e.g.} 
\cite{CSV13,CV15,BIK15,STV16,STV19}
and references therein), where  the existence of weak solutions,
the finite speed of propagation, local H\"older regularity, and asymptotic behaviour
were established.

\begin{figure}[t]
  \centering
    \includegraphics[width =0.30\textwidth]{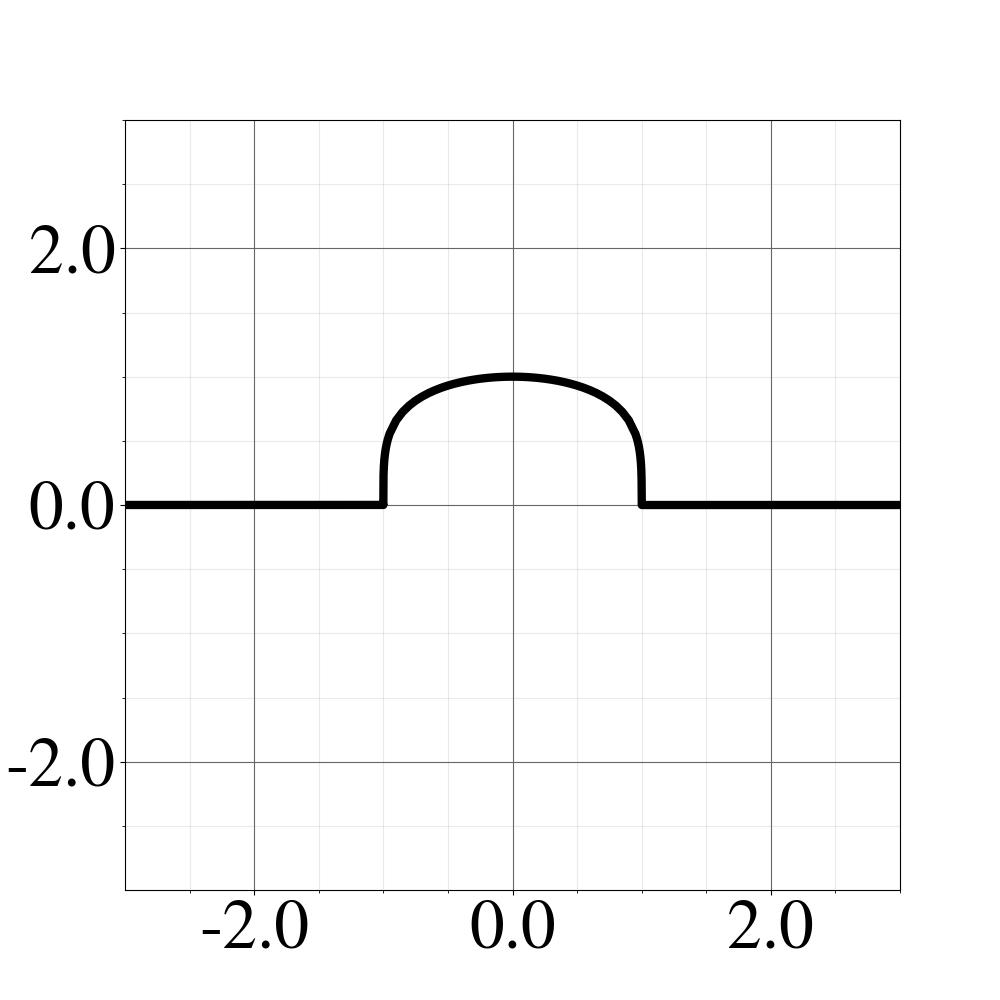} 
    \includegraphics[width =0.30\textwidth]{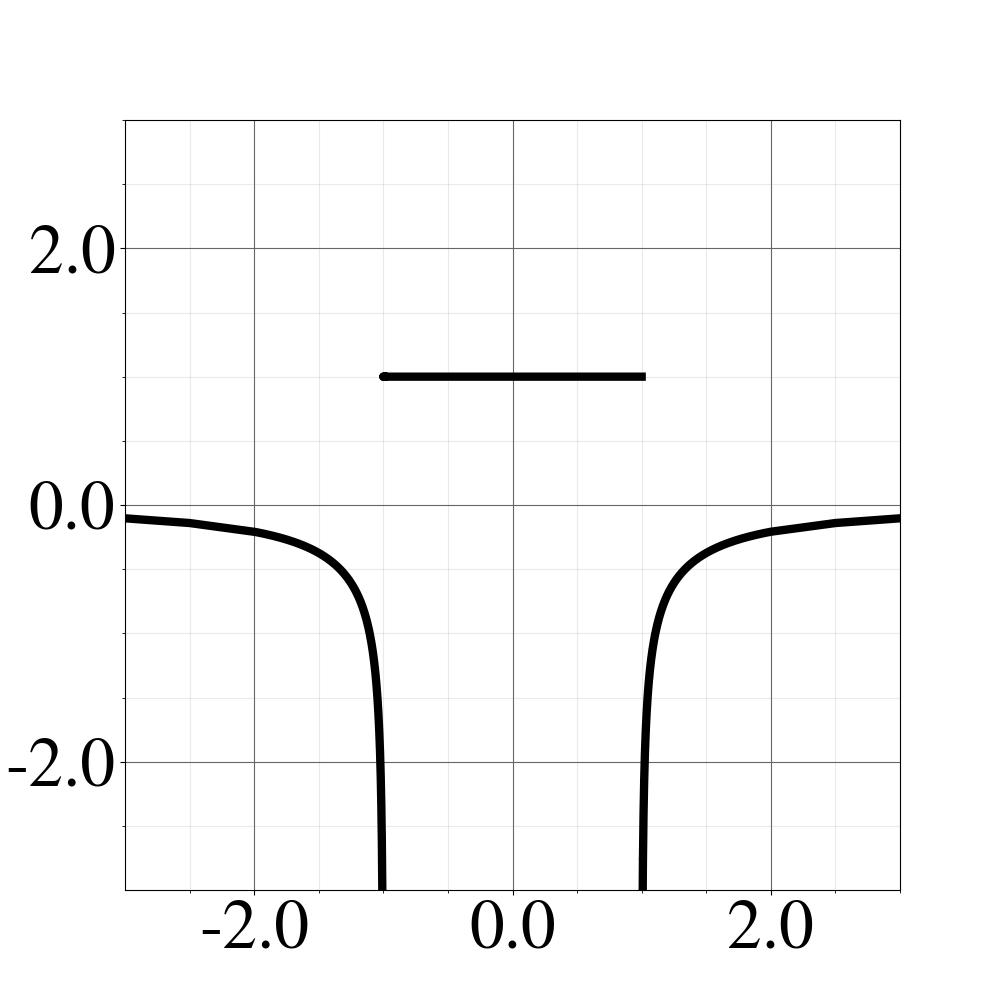}        
     \includegraphics[width = 0.30\textwidth]{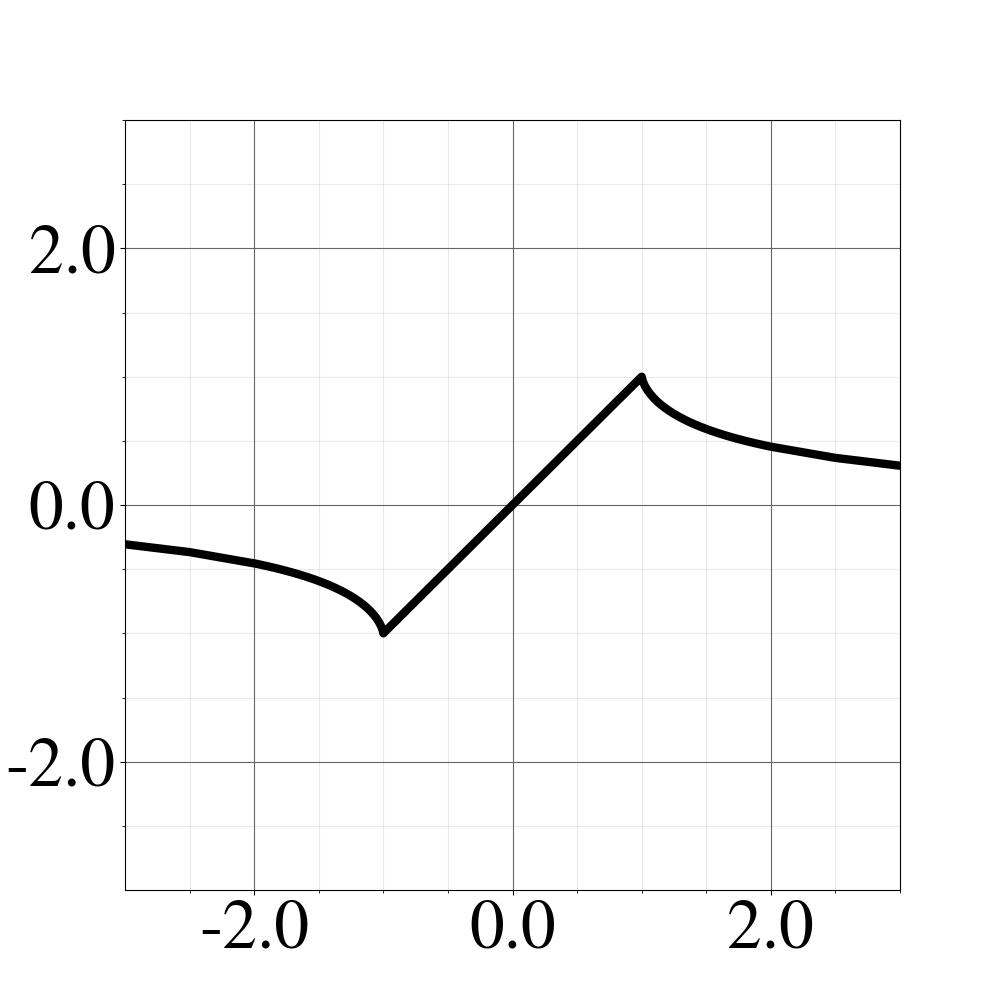}
       \caption{From the left to the right: the Getoor function  $\Phi_\alpha (x)$ given by formula \eqref{profiles} with $\alpha=1/2$,
        the corresponding $\Lambda^\alpha \Phi_\alpha(x)$ (see \eqref{LaPa}, and 
     $U(x)=\int_{-\infty}^x \Lambda^\alpha \Phi_\alpha (y)\; dy$.
       }
    \label{fig:rho0}
\end{figure}

In particular, the 
 work  \cite{BKM10}
contains a construction of 
the following  explicit self-similar solution of equation \eqref{eq:porous} 
    \begin{nalign}\label{rho:self}
        \rho(x,t) = t^{-\frac{1}{\alpha+1}}\Phi_\alpha\left( {x}{t^{-\frac{1}{\alpha+1}}}\right), 
    \end{nalign}
with the profile 
\begin{equation} \label{profiles}
 \Phi_\alpha(x) = K(\alpha,1)  \left(1- |x|^2\right)_+^\frac{\alpha}{2}, 
\end{equation}
and the constant 
\begin{equation}\label{Ka1}
K(\alpha,1) = \Gamma\left(\frac{1}{2}\right) \left[2^\alpha \Gamma\left(1 + \frac{\alpha}{2} \right) \Gamma\left( \frac{1+\alpha}{2}\right) \right]^{-1},
\end{equation} 
where $\Gamma$ denotes the usual $\Gamma$-function 
(see also \cite{BIK15} for analogous  explicit self-similar solutions of the higher dimensional counterpart of  equation \eqref{eq:porous}).
This explicit solution  generalizes  the well-known  Barenblatt self-similar solution of the classical porous medium equation. 
On the other hand,  $\Phi_\alpha (x)$ is called the Getoor function and it has a probabilistic interpretation:
it express the 
expected time of the first passage to the exterior of the unit ball of the symmetric $\alpha$-stable process starting at $x$.


    It was proved by Getoor \cite[Thm 5.2]{getoor1961first} that
    \begin{nalign} \label{LaPa}
        \Lambda^\alpha \Phi_\alpha (x)= 
        \begin{cases}
            1, & \text{for} \quad  |x| <1, \\
            H(x), & \text{for} \quad |x|>1,
        \end{cases}
    \end{nalign}
    where 
    $$H(x) =  \left[ \Gamma\left( -\frac{\alpha}{2}\right) \Gamma \left( \frac{3+\alpha}{2}  \right) \right]^{-1} \Gamma \left( \frac{1}{2}\right) \cdot |x|^{-1-\alpha} \cdot F\left( \frac{2+\alpha }{2}, \frac{1+\alpha}{2}, \frac{1+\alpha}{2}, |x|^{-2}  \right)$$ and $F(a,b,c,x)$ is the hypergeometric function. 
     Thus, applying  the relation  $u_x =\Lambda^\alpha \rho$ to  the self-similar solution  
     \eqref{rho:self}  we obtain the velocity in the explict self-similar form 
       \begin{nalign}\label{uU}
        u(x,t)= U\left( {x}{t^{-\frac{1}{\alpha+1}}}\right) \quad\text{with}\quad  U(x) =\int_{-\infty}^x \Lambda^\alpha  \Phi_\alpha(y)\; \dy.
    \end{nalign}
    Here, we emphasize that,  
 by  properties of the function $H=H(x)$ discussed in 
    \cite[Thm.~5.2]{getoor1961first},
     we obtain 
    \begin{itemize}
        \item $U_x(x) <0$  for $|x|  \geqslant 1$;
        \item $U_x(x) \sim \frac{C}{|x|^{1+\alpha}}$ when $|x| \to \infty$;
        \item $U_x (x)\sim C\left( 1 - |x|^2 \right)^{-\frac{\alpha}{2}}$ when   $|x|  \geqslant 1$ and $|x| \to 1$.
    \end{itemize}
%
The explicit self-similar solution  \eqref{rho:self} and \eqref{uU}, with the profiles $\Phi_\alpha$ and $U$ shown in Fig.~\ref{fig:rho0}, describes an asymptotic behavior of other solutions which we discuss below in Remark \ref{rho:to:Barenblatt}.

\begin{remark}
Notice that if $\rho_0\in C^1(\R)$ is nonnegative and compactly supported, then the corresponding initial velocity
has similar properties:
\begin{equation}
(u_0)_x (x)=\Lambda^\alpha \rho_0(x)  \leqslant 0 \quad\text{for all} \quad x\in \R\setminus {\rm supp}\, \rho_0
\end{equation}
and
$$
|\partial_x u_0(x)| \leqslant \frac{C}{|x|^{1+\alpha}} \quad\text{for all} \quad x\in \R,
$$
which is a direct consequence of the definition of the fractional Laplacian  \eqref{eq:frac}. 
\end{remark}



\subsection*{Notation}
For $p \in  [1, \infty]$, we denote the norm in the usual Lebesgue space $L^p(\R)$
by $\|\cdot\|_p$. The standard Sobolev space of order $k$ is written as $W^{k,p}(\R)$.
For other Banach spaces $X$ appearing throughout this work, the corresponding norm is denoted by $\|\cdot\|_X$. 
The symbol $\mathcal{S}(\R)$ represents the Schwartz space of smooth, rapidly decreasing functions, while $\widehat{\cdot}$ and $\widecheck{\cdot}$ denote the Fourier transform and its inverse. Additionally, for a function $v\in \Lp{1}$, we use the shorthand
\begin{nalign}
    \partial_x^{-1} v(x) = \int_{-\infty}^x v(y) \dy.
\end{nalign}
Constants may vary from line to line and will be denoted by $C$.

\section{Results and comments}

\subsection*{Statement of the problem}
We begin by writing the Euler alignment system with the strongly singular interaction kernel 
\begin{nalign}
    \label{eq:URhoConv}
    \begin{aligned}
        \rho_t + (\rho u )_x &= 0,  \\
        u_t+ uu_x &= \int_\R \frac{1}{|y-x|^{1+\alpha}} \big( u(y,t) - u(x,t)\big) \rho (y,t) \dy, 
    \end{aligned}
    \qquad x\in \R, \quad t>0,
\end{nalign}
 using the relation
\begin{nalign}\label{rel:0}
    \big( u(y) - u(x) \big) \rho(y) =  u(x) \big(\rho(x) - \rho(y)\big) -\big(u(x)\rho(x) - u(y) \rho(y) \big)
\end{nalign}
and the definition of the fractional Laplacian \eqref{eq:frac}  
as the system 
\begin{nalign}
    \label{eq:URhoFractional}
    \begin{aligned}
        \rho_t + (\rho u )_x &= 0,  \\
        u_t+ u(u_x - \Lambda^\alpha\rho) +\Lambda^\alpha(\rho u)&=0,
    \end{aligned}
    \qquad x\in \R, \quad t>0,
\end{nalign}
which we supplement with initial conditions 
\begin{nalign}
    \label{eq:iniConv}
    \rho(x,0) = \rho_0(x) \quad \text{and} \quad u(x,0) = u_0(x).
\end{nalign}

It is useful to study 
 system \eqref{eq:URhoFractional}  expressed (by direct calculations)  in the following formally equivalent form 
\begin{nalign}
    \label{eq:URhoG}
    \begin{aligned}
        \rho_t +(\rho u)_x & = 0, \\
        G_t + (Gu)_x & = 0, \\
        u_x - \Lambda^\alpha\rho& = G,
    \end{aligned}
    \qquad x\in \R, \quad t>0,
\end{nalign}
together with initial conditions
\begin{nalign}
    \label{eq:ini3eq}
    \rho(x,0) = \rho_0(x) \quad \text{and} \quad G(x,0) = G_0(x).
\end{nalign}

\subsection*{Existence of solutions}
Our first goal is to construct  
functions  
$$
(\rho, u,G)= \big(\rho(x,t),u(x,t),G(x,t)\big) \quad \text{for all} \quad  x\in\R, \quad  t\geqslant 0,
$$
 which satisfy both systems \eqref{eq:URhoFractional} and \eqref{eq:URhoG} in a weak sense, with $\alpha \in (0,1)$.
 In fact, we can show that a weak solution of problem \eqref{eq:URhoG}--\eqref{eq:ini3eq}
 solves problem
\eqref{eq:URhoFractional}--\eqref{eq:iniConv}
in the same way as in the proof of Proposition \ref{prop:regular}, below.
In the case when
the interaction kernel $\varphi(x)$ is an integrable  and bounded function, 
the authors of \cite{CCTT16, ST17} noticed that, these two systems are indeed equivalent which played  an important role in their analysis. 

We begin by definitions of weak solutions.
 
\begin{definition} 
    \label{def:WeakSol2eq}
    A couple $(u, \rho)$ is a weak solutions to problem \eqref{eq:URhoFractional}--\eqref{eq:iniConv} with
    initial conditions
    $\rho_0 \in \Li \cap \Lp{1}$  and   $u_0 \in \Li$
    if
    \begin{itemize}
        \item $u\in L^\infty(\R\times [0,\infty))$ and $\rho \in L^q\big([0,T], \Lp{} \big)$  for all $p,q \in (1,\infty)$, $T>0$;
        \item $u \in C\big( [0,T], L^q([-R,R])\big)$ for each $T>0$, $R>0$, and $q\in\big(1/(1-\alpha),\infty\big)$;
        \item $u_x - \Lambda^\alpha \rho \in L^q\big([0,T], \Lp{} \big)$  for all $p,q \in (1,\infty)$ and each $T>0$;
        \item the following equations hold true
        \begin{nalign}
            \int_0^\infty \int_\R \rho \varphi_t \dx \dt + \int_\R \rho_0 \varphi(\cdot,0) \dx &+ \int_0^\infty \int_\R \rho u \varphi_x \dx \dt = 0, \\
            \int_0^\infty \int_\R u \psi_t \dx \dt + \int_\R u_0 \psi(\cdot,0) \dx &
            - \int_0^\infty \int_\R  u(u_x - \Lambda^\alpha \rho)\psi \dx \dt   \\ 
            &- \int_0^\infty \int_\R  u\rho  \Lambda^\alpha\psi \dx \dt = 0,
        \end{nalign}        
    \end{itemize}
    for all  $\varphi, \psi \in C_c^\infty\big(\R \times [0,\infty)\big)$.
\end{definition}

\begin{definition}
    \label{def:WeakSol3eq}
    A triple $(u,\rho, G)$ is a weak solution to problem \eqref{eq:URhoG}--\eqref{eq:ini3eq} with initial conditions $\rho_0, G_0 \in \Li \cap \Lp{1}$  if 
    \begin{itemize}    
        \item $u\in L^\infty(\R\times [0,\infty))$ and $\rho, G \in L^q\big([0,T], \Lp{} \big)$  for all $p,q \in (1,\infty)$, $T>0$;
        \item  $u \in C\big( [0,T], L^q([-R,R])\big)$ for each $T>0$, $R>0$ and $q\in\big(1/(1-\alpha),\infty\big)$;
        \item the following equations hold true
    \end{itemize}
    \begin{nalign} \label{rho:G:u:weak}
        \int_0^\infty \int_\R \rho \varphi_t \dx \dt + \int_\R \rho_0 \varphi(\cdot,0) \dx + \int_0^\infty \int_\R \rho u \varphi_x \dx \dt &= 0, \\
        \int_0^\infty \int_\R G \psi_t \dx \dt + \int_\R G_0 \psi(\cdot,0) \dx + \int_0^\infty \int_\R G u \psi_x \dx \dt &= 0, \\
         \int_0^\infty \int_\R u \zeta_x \dx \dt + \int_0^\infty \int_\R \rho \Lambda^\alpha \zeta \dx \dt + \int_0^\infty \int_\R G \zeta \dx \dt&=0,
    \end{nalign}
    for all $\varphi,\psi,\zeta \in C_c^\infty\big(\R \times [0,\infty)\big)$.
\end{definition}

We denote the masses of initial conditions by

\begin{nalign} \label{mass}
    M_\rho \equiv  \int_\R{\rho_0} (x) \dx \quad \text{and} \quad M_G \equiv \int_\R{G_0(x)}\dx.
\end{nalign}

\begin{theorem}[Existence of solutions]
    \label{thm:GlobExist}
    Let $\alpha \in (0,1]$.
    Assume that $u_0 \in \Li$ and $\rho_0 \in \Li \cap \Lp{1}$ satisfy 
    $G_0 \equiv  \partial_x u_0 - \Lambda^\alpha \rho_0 \in \Li \cap \Lp{1}$.
    Suppose  there exist a constant $a>0$ such that
    \begin{nalign}\label{ini:exist}
        0 \leqslant G_0(x) \leq a \rho_0(x) \quad \text{a.e.~in} \quad x\in \R.
    \end{nalign}
    There exist a global-in-time weak solution $(\rho,u)$ of problem \eqref{eq:URhoFractional}--\eqref{eq:iniConv} (cf. Def.~\ref{def:WeakSol2eq}) which for $G=u_x-\Lambda^\alpha \rho$,  
    is also a weak solution of problem \eqref{eq:URhoG}--\eqref{eq:ini3eq} (cf. Def.~\ref{def:WeakSol3eq}). This solution satisfies  the following properties for all $t>0$:
        \begin{itemize}
        \item the estimate 
        \begin{nalign} \label{u:inf:0}
             \|u(\cdot, t)\|_\infty &\leqslant \|u_0\|_\infty;
        \end{nalign}
        \item the positivity and comparison principles \label{Py}
        \begin{nalign} \label{Garho:0}
            0\leqslant G(x,t) \leqslant a \rho(x,t) \quad   \text{a.e. in} \quad x\in\R;
        \end{nalign}
        \item the $L^p$-estimates 
        \begin{nalign}\label{eq:Lp:est}
              \frac{1}{a}\|G(\cdot, t)\|_p &\leqslant\|\rho(\cdot, t)\|_p \leqslant \|\rho_0\|_p \quad \text{for each} \quad p\in[1,\infty]
        \end{nalign}
        and
        \begin{nalign}
            \label{eq:DecEstim}
            \frac{1}{a}\|G(\cdot, t)\|_p \leqslant\|\rho(\cdot, t)\|_p \leqslant C M_\rho^\frac{2+p\alpha}{2p + 2\alpha} {t^{\left(-1 + \frac{1}{p}\right)\frac{1}{2+\alpha}}} \quad \text{for each} \quad p\in(2,\infty).
        \end{nalign}
    \end{itemize}
\end{theorem}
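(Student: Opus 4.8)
The plan is to obtain the solution as a vanishing-viscosity limit of smooth solutions of a parabolic regularisation with mollified data, and then to pass to the limit by a compactness argument whose core is the identity $\Lambda^{-\alpha}u_x=\Lambda^{-\alpha}G+\rho$; I describe it for $\alpha\in(0,1)$, where this identity can be exploited. Fix a nonnegative mollifier $\eta_\varepsilon$ with $\int\eta_\varepsilon=1$ and put $\rho_0^\varepsilon=\rho_0*\eta_\varepsilon$, $u_0^\varepsilon=u_0*\eta_\varepsilon$; then $G_0^\varepsilon:=\partial_xu_0^\varepsilon-\Lambda^\alpha\rho_0^\varepsilon=G_0*\eta_\varepsilon$ still satisfies $0\leqslant G_0^\varepsilon\leqslant a\rho_0^\varepsilon$, $\|\rho_0^\varepsilon\|_p\leqslant\|\rho_0\|_p$ and $\|G_0^\varepsilon\|_p\leqslant a\|\rho_0\|_p$, with $\rho_0^\varepsilon\to\rho_0$ and $G_0^\varepsilon\to G_0$ in $\Lp{1}\cap\Lp{p}$ for every $p<\infty$. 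I then solve the parabolic system
\begin{nalign}
  \rho_t^\varepsilon+(\rho^\varepsilon u^\varepsilon)_x=\varepsilon\rho_{xx}^\varepsilon,\qquad
  u_t^\varepsilon+u^\varepsilon\big(u_x^\varepsilon-\Lambda^\alpha\rho^\varepsilon\big)+\Lambda^\alpha(\rho^\varepsilon u^\varepsilon)=\varepsilon u_{xx}^\varepsilon
\end{nalign}
with data $(\rho_0^\varepsilon,u_0^\varepsilon)$: for fixed $u^\varepsilon\in L^\infty$ the nonlocal terms are of spatial order $\alpha<1$ and $1$ whereas the regularisation is of order $2$, so a contraction argument in suitable function spaces (adapted to the fact that $u_0$ need not decay) gives a unique smooth local-in-time solution, global by the a priori bounds below; and $G^\varepsilon:=u_x^\varepsilon-\Lambda^\alpha\rho^\varepsilon$ is then seen, by differentiating, to solve the linear continuity-diffusion equation $G_t^\varepsilon+(G^\varepsilon u^\varepsilon)_x=\varepsilon G_{xx}^\varepsilon$, so $(\rho^\varepsilon,u^\varepsilon,G^\varepsilon)$ solves a regularisation of \eqref{eq:URhoG} as well.

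The heart of the proof is a set of $\varepsilon$-uniform a priori estimates. The parabolic maximum principle, applied to the linear equation $z_t+(zu^\varepsilon)_x=\varepsilon z_{xx}$ (solved, with nonnegative data, by $z=\rho^\varepsilon$, $z=G^\varepsilon$ and $z=a\rho^\varepsilon-G^\varepsilon$), gives $\rho^\varepsilon\geqslant0$ and \eqref{Garho:0}. Next, using \eqref{rel:0} the $u^\varepsilon$-equation reads $u_t^\varepsilon+u^\varepsilon u_x^\varepsilon=\int_\R|x-y|^{-1-\alpha}\big(u^\varepsilon(y,t)-u^\varepsilon(x,t)\big)\rho^\varepsilon(y,t)\dy+\varepsilon u_{xx}^\varepsilon$; at a spatial maximum of $u^\varepsilon(\cdot,t)$ the alignment integral is $\leqslant0$ since $\rho^\varepsilon\geqslant0$, and so is $\varepsilon u_{xx}^\varepsilon$, so $\max_xu^\varepsilon(\cdot,t)$ is nonincreasing and, symmetrically, $\min_xu^\varepsilon(\cdot,t)$ nondecreasing, which yields \eqref{u:inf:0}. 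For the $L^p$-bounds, multiply the $\rho^\varepsilon$-equation by $p(\rho^\varepsilon)^{p-1}$ and integrate: the viscous term is $\leqslant0$ and the transport term becomes $-(p-1)\int(\rho^\varepsilon)^pu_x^\varepsilon=-(p-1)\int(\rho^\varepsilon)^p\big(\Lambda^\alpha\rho^\varepsilon+G^\varepsilon\big)$, where $\int(\rho^\varepsilon)^p\Lambda^\alpha\rho^\varepsilon\geqslant\tfrac1{p+1}\int\Lambda^\alpha\big((\rho^\varepsilon)^{p+1}\big)=0$ by the C\'ordoba--C\'ordoba inequality (with $\Phi(r)=r^{p+1}/(p+1)$) and $\int(\rho^\varepsilon)^pG^\varepsilon\geqslant0$; hence $\tfrac{d}{dt}\|\rho^\varepsilon(t)\|_p\leqslant0$, and together with mass conservation $\|\rho^\varepsilon(t)\|_1=M_\rho$, the limit $p\to\infty$, and $G^\varepsilon\leqslant a\rho^\varepsilon$ this gives \eqref{eq:Lp:est}. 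Finally \eqref{eq:DecEstim} comes from the same identity, now retaining the dissipation: the Stroock--Varopoulos inequality $\int(\rho^\varepsilon)^p\Lambda^\alpha\rho^\varepsilon\geqslant c_{p,\alpha}\|\Lambda^{\alpha/2}(\rho^\varepsilon)^{(p+1)/2}\|_2^2$ combined with a one-dimensional fractional Gagliardo--Nirenberg inequality and the conserved mass feed into an ODE comparison (a Moser-type iteration in $p$) yielding the stated decay, which by $G^\varepsilon\leqslant a\rho^\varepsilon$ also bounds $a^{-1}G^\varepsilon$.

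It remains to let $\varepsilon\to0$. By the above, $\{u^\varepsilon\}$ is bounded in $L^\infty(\R\times[0,\infty))$ and $\{\rho^\varepsilon\},\{G^\varepsilon\}$ in $L^\infty\big([0,T];\Lp{1}\cap\Lp{p}\big)$ for all $p<\infty$, so along a subsequence $\rho^\varepsilon\rightharpoonup\rho$, $G^\varepsilon\rightharpoonup G$ weak-$*$. The crucial point is strong compactness of $u^\varepsilon$: applying $\Lambda^{-\alpha}$ to the constraint gives $\Lambda^{-\alpha}u_x^\varepsilon=\Lambda^{-\alpha}G^\varepsilon+\rho^\varepsilon$, and for $\alpha\in(0,1)$ the Riesz potential $\Lambda^{-\alpha}$ sends $\Lp{1}\cap\Li$ boundedly into $\Lp{q}$ for every $q\in\big(1/(1-\alpha),\infty\big)$, so $\Lambda^{-\alpha}u_x^\varepsilon$ is bounded in $L^\infty_tL^q_x$ and hence $\{u^\varepsilon\}$ is bounded, for each such $q$ and each $R>0$, in $L^\infty\big([0,T];W^{1-\alpha,q}([-R,R])\big)$; combining this with a uniform bound on $\partial_tu^\varepsilon$ in a fixed negative Sobolev space (read off from the $u^\varepsilon$-equation, whose right-hand side is controlled in $L^\infty_t(\Lp{q}\cap W^{-\alpha,q}_{loc})$ up to a viscous remainder tending to $0$), the Aubin--Lions--Simon lemma yields, up to a further subsequence, $u^\varepsilon\to u$ in $C\big([0,T];L^q([-R,R])\big)$ for the exponents $q\in\big(1/(1-\alpha),\infty\big)$ of Definitions \ref{def:WeakSol2eq}--\ref{def:WeakSol3eq}. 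This strong$\,\times\,$weak convergence passes to the limit in the products $\rho^\varepsilon u^\varepsilon$, $G^\varepsilon u^\varepsilon$, $u^\varepsilon(u_x^\varepsilon-\Lambda^\alpha\rho^\varepsilon)=u^\varepsilon G^\varepsilon$ and, after transferring $\Lambda^\alpha$ onto the test function, in $\Lambda^\alpha(\rho^\varepsilon u^\varepsilon)$; the viscous terms vanish since $\varepsilon\int\rho^\varepsilon\varphi_{xx}\to0$. Thus $(\rho,u,G)$ satisfies the weak formulations of Definitions \ref{def:WeakSol2eq} and \ref{def:WeakSol3eq} (the equivalence between the two systems being obtained as in Proposition \ref{prop:regular}); the estimates \eqref{u:inf:0}, \eqref{eq:Lp:est}, \eqref{eq:DecEstim} survive by weak-$*$ lower semicontinuity of the norms, and $0\leqslant G\leqslant a\rho$ survives because weak-$*$ limits of the nonnegative functions $G^\varepsilon$ and $a\rho^\varepsilon-G^\varepsilon$ are nonnegative.

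The main obstacle is precisely the compactness of $u^\varepsilon$ in the last step: one has no control on $\rho^\varepsilon$ beyond the $L^p$-bounds, so $\Lambda^\alpha\rho^\varepsilon$ is a priori unbounded in every natural space, and the only remedy is the identity $\Lambda^{-\alpha}u_x=\Lambda^{-\alpha}G+\rho$, which replaces $\Lambda^\alpha\rho$ by $\rho$ plus a smoothed copy of $G$ (both controlled), and which crucially requires $\Lambda^{-\alpha}$ to act on $L^1$-functions, legitimate in one space dimension only for $\alpha<1$ (which is why the endpoint $\alpha=1$ needs a separate treatment). A secondary, routine difficulty is the rigorous justification of the integrations by parts and of the C\'ordoba--C\'ordoba and Stroock--Varopoulos inequalities at the regularised level, which first needs enough smoothness and spatial decay of $(\rho^\varepsilon,G^\varepsilon)$ from the parabolic theory.
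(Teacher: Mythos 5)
Your proposal is correct and follows essentially the same route as the paper: viscous regularisation, maximum/comparison principles for $u^\varepsilon$ and for $G^\varepsilon$, $a\rho^\varepsilon-G^\varepsilon$, the Stroock--Varopoulos and fractional Gagliardo--Nirenberg estimates for the $L^p$ bounds and decay, and above all the same key compactness mechanism, namely inverting the constraint to $\Lambda^{-\alpha}u_x=\Lambda^{-\alpha}G+\rho$ so that $u^\varepsilon$ is bounded in $H^{1-\alpha}_q$ locally and Aubin--Lions--Simon applies. The only (cosmetic) deviations are that you mollify the initial data and regularise the $(\rho,u)$-system, deriving the $G$-equation by differentiation, whereas the paper regularises the $(\rho,G)$-system, defines $u^\varepsilon=\partial_x^{-1}(G^\varepsilon+\Lambda^\alpha\rho^\varepsilon)$, and recovers the $u$-equation afterwards; and you invoke C\'ordoba--C\'ordoba where the paper simply discards the nonnegative Stroock--Varopoulos term.
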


\begin{remark} \label{rem:u:const:0}
    In the proof of Theorem \ref{thm:GlobExist}, we construct solutions of system \eqref{eq:URhoG} where the last equation is replaced by its integrated version 
    $u = \partial^{-1}_x G + \partial^{-1}_x \Lambda^\alpha \rho$. Notice that, if~$(\rho,u)$ is a solution of the Euler alignment system \eqref{eq:URhoFractional} then so is the pair $(\rho, u+C)$ for each constant $C\in \R$
    (it suffices to  change the variables $(x,t)\mapsto (x-Ct,t)$). Thus, without loss of generality we may ``normalize'' the solution $u(x,t)$ by assuming that $\lim\limits_{x\to -\infty} u(x,t)=0$ (provided  this limit exists). 
    This normalization does not play any role neither in the proof of Theorem \ref{thm:GlobExist} (Existence of solutions) nor in Theorem \ref{thm:DecayEstiamtes} (Decay estimates). 
    However, we impose it explicitly by choosing $u_0=\partial^{-1}_x G_0+\partial^{-1}_x \Lambda^\alpha\rho_0$ in 
    Theorem \ref{thm:asymp} in order to study  the asymptotic behavior of solutions.
\end{remark}

\begin{remark} 
    We emphasize that  the pair $\big(\rho(x,t), u(x,t)\big)$, given by formulas \eqref{rho:self} and \eqref{uU}, with the profiles $\Phi_\alpha(x)$ and $U(x)$ as depicted in Fig.~\ref{fig:rho0}, represents an explicit weak {\it non-smooth}  solution of the Euler alignment system \eqref{eq:URhoConv} which falls in the scope of Theorem \ref{thm:GlobExist}. 
    Notice that, according to the papers \cite{T19,AC21}, solutions of equation \eqref{eq:porous} (thus of system \eqref{eq:URhoPhi}) may develop singularities for $\alpha \in (0,2)$, on the real line or in the periodic case, even with just a point of vacuum.
    On the other hand, as shown {\it e.g.} in~\cite{ST17,DKRL18,ST17b,ST18,L19}, solutions remain smooth for all time in the case where the Euler alignment system \eqref{eq:URhoPhi} with $\alpha \in (0,2)$ is considered on the torus and is supplemented with smooth initial data such that $\rho_0>0$. 
\end{remark}

\subsection*{Asymptotic behavior of solutions and rarefaction waves}

Methods used in the proof of Theorem \ref{thm:GlobExist} will be now applied to study 
an asymptotic  behavior of solutions. First, however,
we improve decay estimates \eqref{eq:DecEstim} under  an additional assumption on initial conditions. Below, we discuss a  class of initial 
data satisfying such assumptions.

\begin{theorem}[Decay estimates]
    \label{thm:DecayEstiamtes}
    Let $(\rho, u,  G)$  be the solution to problems  \eqref{eq:URhoFractional}--\eqref{eq:iniConv} and  \eqref{eq:URhoG}-\eqref{eq:ini3eq} constructed in Theorem \ref{thm:GlobExist} and corresponding to the initial conditions $(\rho_0, G_0)$ satisfying the following inequalities 
    for some constants  $a>0$ and $b>0${\rm :}
    \begin{nalign} 
        \label{eq:InitConComp}
        0\leqslant b \rho_0(x) \leqslant G_0(x) \leqslant a \rho_0(x) \quad \text{a.e. in $x\in \R$}.
    \end{nalign} 
  Then the following estimates hold true for all $t>0${\rm :}
        \begin{nalign}
            b \rho(x,t) \leqslant G(x,t) \leqslant a \rho(x,t)\quad \text{a.e. in $x\in \R$} 
        \end{nalign}
            and 
        \begin{nalign} 
            \label{eq:RhoGDecayEstim}
            \|\rho(\cdot, t)\|_p \leqslant C(p) M_\rho^\frac{1}{p} {t^{-1+\frac{1}{p}}}   \quad \text{and} \quad \|G(\cdot, t)\|_p \leqslant C(p) M_G^\frac{1}{p} {t^{-1+\frac{1}{p}}},
        \end{nalign}
          for    each  $p\in[1,\infty]$,
        where  constants $C = C(p)>0$ are independent of $t$ and the masses $M_\rho$ and $M_G$ are given by formulas \eqref{mass}.
\end{theorem}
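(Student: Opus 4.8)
\medskip

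The plan is to derive Theorem \ref{thm:DecayEstiamtes} from a two-sided comparison principle together with an $L^\infty$--$L^1$ interpolation, all the pointwise work being carried out on the smooth approximations used to construct $(\rho,u,G)$ in Theorem \ref{thm:GlobExist}.

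First I would settle the two-sided bound $b\rho\le G\le a\rho$. The upper bound $G\le a\rho$ is already contained in Theorem \ref{thm:GlobExist}, since assumption \eqref{eq:InitConComp} in particular includes $0\le G_0\le a\rho_0$. For the new lower bound one uses the structural fact that $\rho$ and $G$ solve \emph{the same} continuity equation $w_t+(wu)_x=0$ with a common velocity $u$, so that $G-b\rho$ solves it as well; hence the very positivity argument that was used in Theorem \ref{thm:GlobExist} to propagate $G_0\ge0$ into $G\ge0$ applies verbatim to the initial sign condition $G_0-b\rho_0\ge0$ from \eqref{eq:InitConComp}: on the regularized problem the inequality $G^\varepsilon-b\rho^\varepsilon\ge0$ is preserved by the maximum principle and it survives the passage to the limit. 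Integrating $b\rho(x,t)\le G(x,t)\le a\rho(x,t)$ over $\R$ and using that $\rho\ge0$, $G\ge0$ satisfy continuity equations (so their masses are conserved) gives $\|\rho(\cdot,t)\|_1=M_\rho$ and $\|G(\cdot,t)\|_1=M_G$, and in passing $bM_\rho\le M_G\le aM_\rho$.

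The heart of the matter — and the genuine improvement over \eqref{eq:DecEstim} — is the sharp $L^\infty$ decay $\|\rho(\cdot,t)\|_\infty\le 1/(bt)$. I would run a maximum-principle argument on a smooth approximation $\rho^\varepsilon$, which still solves a continuity equation with velocity reconstructed through $u^\varepsilon_x=\Lambda^\alpha\rho^\varepsilon+G^\varepsilon$ and with $G^\varepsilon\ge b\rho^\varepsilon\ge0$ by the previous step at the regularized level. Put $v(t)=\|\rho^\varepsilon(\cdot,t)\|_\infty$ and let $x_t$ be a point at which $\rho^\varepsilon(\cdot,t)$ attains its maximum. There the first-order spatial term vanishes, any viscous regularization term has a favourable sign, and — crucially — $\Lambda^\alpha\rho^\varepsilon(x_t,t)\ge0$, because by \eqref{eq:frac} one has $\Lambda^\alpha f(x_t)=\int_\R\big(f(x_t)-f(y)\big)|y-x_t|^{-1-\alpha}\dy\ge0$ whenever $f$ is maximized at $x_t$ (this requires nothing about staying away from vacuum). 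Inserting the constraint $u^\varepsilon_x=\Lambda^\alpha\rho^\varepsilon+G^\varepsilon$ and the bound $G^\varepsilon\ge b\rho^\varepsilon$ into the equation for $\rho^\varepsilon$ yields, at $x_t$,
\[
\partial_t\rho^\varepsilon(x_t,t)\le-\rho^\varepsilon(x_t,t)\big(\Lambda^\alpha\rho^\varepsilon(x_t,t)+G^\varepsilon(x_t,t)\big)\le-b\,v(t)^2,
\]
and the standard envelope (Danskin) inequality for the Dini derivative of a supremum gives $v'(t)\le-b\,v(t)^2$ for a.e.\ $t$. Integrating this ODE inequality produces $v(t)\le 1/(bt)$, a bound independent of $\varepsilon$; letting $\varepsilon\to0$ gives $\|\rho(\cdot,t)\|_\infty\le 1/(bt)$, and hence $\|G(\cdot,t)\|_\infty\le a\|\rho(\cdot,t)\|_\infty\le a/(bt)$ by the comparison bound.

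It remains to interpolate. For $p\in(1,\infty)$ the elementary inequality $\|\rho(\cdot,t)\|_p\le\|\rho(\cdot,t)\|_1^{1/p}\|\rho(\cdot,t)\|_\infty^{1-1/p}$ together with mass conservation gives $\|\rho(\cdot,t)\|_p\le M_\rho^{1/p}(bt)^{-1+1/p}$, and likewise $\|G(\cdot,t)\|_p\le M_G^{1/p}\big(a/(bt)\big)^{1-1/p}$; the endpoints $p=1$ and $p=\infty$ are already in hand. This is exactly \eqref{eq:RhoGDecayEstim} with $C(p)$ depending only on $a$, $b$, $p$. I expect the only real obstacle to be the rigorous implementation of the $L^\infty$ step at the regularized level — checking that $G^\varepsilon\ge b\rho^\varepsilon$ holds uniformly in $\varepsilon$, that the sign $\Lambda^\alpha\rho^\varepsilon\ge0$ at interior maxima is not destroyed by whichever regularization of $\Lambda^\alpha$ and of the velocity reconstruction is used in Theorem \ref{thm:GlobExist}, and that $\rho^\varepsilon$ is regular enough (smooth, decaying at spatial infinity) for the maximum to be attained and for $t\mapsto v(t)$ to be absolutely continuous — everything else being soft comparison and Hölder interpolation.
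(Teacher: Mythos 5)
Your proposal is correct, but for the decisive decay estimate it follows a genuinely different route from the paper. The two-sided comparison $b\rho^\varepsilon\leqslant G^\varepsilon\leqslant a\rho^\varepsilon$ is obtained exactly as in the paper (the parabolic maximum principle applied to $G^\varepsilon-b\rho^\varepsilon$ and $a\rho^\varepsilon-G^\varepsilon$, which solve the same drift--diffusion equation \eqref{eq:w}), and the passage to the limit $\varepsilon\to0$ in all the resulting bounds is the duality argument already used in the proof of Theorem \ref{thm:GlobExist}. Where you diverge is in the decay itself: the paper (via Theorem \ref{thm:ExistEstimatesEpsAB}) never touches the spatial maximum of $\rho^\varepsilon$; it multiplies the regularized continuity equation by $p(\rho^\varepsilon)^{p-1}$, substitutes $u^\varepsilon_x=G^\varepsilon+\Lambda^\alpha\rho^\varepsilon$ in identity \eqref{z:B3}, discards the Stroock--Varopoulos and viscous terms, keeps only $-(p-1)\int(\rho^\varepsilon)^pG^\varepsilon\leqslant -b(p-1)\int(\rho^\varepsilon)^{p+1}$, and closes a Riccati-type differential inequality for $\|\rho^\varepsilon(\cdot,t)\|_p^p$ by H\"older against the conserved mass; the constant $C(p)=b^{-(p-1)/p}$ stays bounded as $p\to\infty$, which yields the sup bound in the limit $p\to\infty$. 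You instead prove the $L^\infty$ bound $\|\rho^\varepsilon(\cdot,t)\|_\infty\leqslant 1/(bt)$ first, by evaluating the non-divergence form of the equation at a maximizer, using $\Lambda^\alpha\rho^\varepsilon\geqslant0$ there and $G^\varepsilon\geqslant b\rho^\varepsilon$, and then interpolate downward; the endpoint constants agree. Your route requires the pointwise machinery (attainment of the supremum for $t>0$, Lipschitz continuity of $t\mapsto\sup_x\rho^\varepsilon(x,t)$, control of the fractional term along near-maximizing sequences), but all of this is already deployed in the paper's Lemma \ref{lem:u:est} for $u^\varepsilon$ following \cite{DI06}, so the technical obstacles you flag are real but surmountable with tools the paper itself uses; what the paper's integral method buys is that it avoids any pointwise evaluation of $\Lambda^\alpha\rho^\varepsilon$ altogether, needing only its nonnegativity after integration against $(\rho^\varepsilon)^p$.
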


The $L^p$-decay estimates \eqref{eq:RhoGDecayEstim} allow us to to study the following scaling limits of solutions to systems
\eqref{eq:URhoFractional}-\eqref{eq:iniConv} and \eqref{eq:URhoG}-\eqref{eq:ini3eq}
\begin{align}
    \rho^\lambda(x,t) \equiv \lambda \rho(\lambda x,\lambda t), \quad G^\lambda(x,t) \equiv  \lambda G(\lambda x,\lambda t), \quad u^\lambda(x,t) \equiv  u(\lambda x,\lambda t).
\end{align}

\begin{theorem}[Asymptotic behavior] 
\label{thm:asymp}
    Let $(u,\rho, G)$ be a weak solution to systems \eqref{eq:URhoFractional}-\eqref{eq:iniConv} and \eqref{eq:URhoG}-\eqref{eq:ini3eq} constructed in Theorem \ref{thm:GlobExist} and satisfying the $L^p$-decay estimates \eqref{eq:RhoGDecayEstim}. 
    Assume, moreover, that $u_0=\partial^{-1}_x G_0+\partial^{-1}_x \Lambda^\alpha\rho_0$.
    For each $r,p\in(1,\infty)$ and $q\in\big( 1/\alpha, \infty\big)$
        \begin{nalign}
        \label{eq:WeakLambdaConvergence}
        \begin{matrix}
            \rho^{\lambda} \to \overline{\rho} \\
             G^{\lambda} \to \overline{G}
        \end{matrix}& & & \text{weakly in} \; \; L^r\big( [t_1, t_2], \Lp{} \big),  \\
        u^{\lambda} \to \overline{u}& & & \text{strongly in} \; \; C\big( [t_1, t_2], L^q([-R,R]) \big),
    \end{nalign} 
for all $0<t_1<t_2$ and $R>0$,
    where $\big(\overline{u}, \overline{\rho}, \overline{G}\big)$ is a weak solution to the system
    \begin{nalign}
        \overline{\rho}_t + (\overline{u}\overline{\rho} )_x &= 0, \\
        \overline{G}_t + (\overline{u}\overline{G} )_x &= 0, \\
        \overline{u}_x=&\overline G, 
    \end{nalign}
    supplemented with the the initial conditions ${\rm(}$involving the Dirac measure $\delta_0$ and the masses of initial conditions \eqref{mass}{\rm )}
    \begin{nalign}
          \overline{\rho}(x,0) = M_\rho \delta_0, \qquad \overline{G}(x,0) = M_G \delta_0,
    \end{nalign}
    and given by the explicit formulas
    \begin{nalign}\label{orho}
       \overline{\rho}(x,t) =  \frac{M_\rho}{M_G}
        \begin{cases}
            0, & x\leqslant 0, \\
             \frac{1}{t}, & 0<x\leqslant M_G t, \\
            0, & x> M_G t,
        \end{cases} \qquad
         \overline{G}(x,t) = 
        \begin{cases}
            0, & x\leqslant 0, \\
            \frac{1}{t}, & 0<x\leqslant M_G t, \\
            0, & x> M_G t
        \end{cases} 
    \end{nalign}
and
        \begin{nalign}\label{orho:u}
\overline{u}(x,t) = 
        \begin{cases}
            0, & x\leqslant 0, \\
            \frac{x}{t}, & 0<x \leqslant M_G t, \\
            M_G, & x > M_G t.
        \end{cases}
        \end{nalign}
        
\end{theorem}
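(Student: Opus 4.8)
The strategy is to exploit the exact scaling symmetry of system \eqref{eq:URhoG} and of the $L^p$-bounds of Theorems~\ref{thm:GlobExist}--\ref{thm:DecayEstiamtes}, pass to a limit by compactness, and then identify the limit via self-similarity. First I would record the equations satisfied by the rescaled triple $(\rho^\lambda,G^\lambda,u^\lambda)$: the two continuity equations scale to $\rho^\lambda_t+(\rho^\lambda u^\lambda)_x=0$ and $G^\lambda_t+(G^\lambda u^\lambda)_x=0$ unchanged, while (since $\Lambda^\alpha$ is $\alpha$-homogeneous) the constitutive relation $u_x-\Lambda^\alpha\rho=G$ becomes $u^\lambda_x-\lambda^{-\alpha}\Lambda^\alpha\rho^\lambda=G^\lambda$; equivalently, using $u_0=\partial_x^{-1}G_0+\partial_x^{-1}\Lambda^\alpha\rho_0$ and $\lim_{x\to-\infty}u(x,t)=0$,
\begin{equation}
u^\lambda(x,t)=\partial_x^{-1}G^\lambda(\cdot,t)(x)+\lambda^{-\alpha}\,\partial_x^{-1}\Lambda^\alpha\rho^\lambda(\cdot,t)(x).
\end{equation}
Because $\|\rho^\lambda(\cdot,t)\|_p=\lambda^{1-1/p}\|\rho(\cdot,\lambda t)\|_p$ (and likewise for $G^\lambda$), the decay estimates \eqref{eq:RhoGDecayEstim} turn into the \emph{$\lambda$-independent} bounds
\begin{equation}
\|\rho^\lambda(\cdot,t)\|_p\leqslant C(p)M_\rho^{1/p}t^{-1+1/p},\qquad \|G^\lambda(\cdot,t)\|_p\leqslant C(p)M_G^{1/p}t^{-1+1/p},\qquad \|u^\lambda(\cdot,t)\|_\infty\leqslant\|u_0\|_\infty,
\end{equation}
for $p\in[1,\infty]$ and $t>0$, and the inequalities $0\leqslant b\rho^\lambda\leqslant G^\lambda\leqslant a\rho^\lambda$ persist.

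On each slab $[t_1,t_2]\times\R$ with $0<t_1<t_2$, these uniform bounds give weak precompactness of $\{\rho^\lambda\}$, $\{G^\lambda\}$ in $L^r\big([t_1,t_2],\Lp{}\big)$ for all $r,p\in(1,\infty)$. For $\{u^\lambda\}$ I would re-run, uniformly in $\lambda$, the compactness estimates from the proof of Theorem~\ref{thm:GlobExist}: the identity $\Lambda^{-\alpha}u^\lambda_x=\lambda^{-\alpha}\rho^\lambda+\Lambda^{-\alpha}G^\lambda$ gives $\lambda$-independent fractional spatial regularity of $u^\lambda$, the rescaled momentum equation in \eqref{eq:URhoFractional} gives a $\lambda$-independent bound on $\partial_t u^\lambda$ in a negative-order space, and an Aubin--Lions--Simon argument yields strong precompactness of $\{u^\lambda\}$ in $C\big([t_1,t_2],L^q([-R,R])\big)$ for $q\in(1/\alpha,\infty)$, $R>0$. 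Extracting a subsequence, $\rho^\lambda\rightharpoonup\overline\rho$, $G^\lambda\rightharpoonup\overline G$, $u^\lambda\to\overline u$, and I pass to the limit in the weak formulation \eqref{rho:G:u:weak} restricted to $t\in(t_1,t_2)$: in the bilinear terms $\rho^\lambda u^\lambda$, $G^\lambda u^\lambda$ one multiplies a weakly convergent by a locally strongly convergent sequence, which passes to the limit against a compactly supported test function, giving $\overline\rho_t+(\overline u\overline\rho)_x=0$ and $\overline G_t+(\overline u\overline G)_x=0$; in the rescaled third equation the term $\lambda^{-\alpha}\iint\rho^\lambda\Lambda^\alpha\zeta\dx\dt$ is $O(\lambda^{-\alpha})$ since $\Lambda^\alpha\zeta\in L^{p'}(\R)$ for $\zeta\in C_c^\infty$ and $\rho^\lambda$ is bounded in $L^r(L^p)$, so in the limit $\overline u_x=\overline G$. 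As $\overline G\in L^r\big([t_1,t_2],\Lp{}\big)$ and $\overline u\in L^\infty$, this forces $\overline u(\cdot,t)\in W^{1,p}(\R)\cap L^\infty(\R)$, in particular continuous in $x$, for a.e.\ $t>0$; moreover $0\leqslant b\overline\rho\leqslant\overline G\leqslant a\overline\rho$.

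To identify the limit, eliminating $\overline G=\overline u_x$ between the last two equations yields $(\overline u_t+\overline u\,\overline u_x)_x=0$, so $\overline u_t+\overline u\,\overline u_x$ depends on $t$ only; the bound $\|\overline u\|_\infty\leqslant\|u_0\|_\infty$ together with $\lim_{x\to-\infty}\overline u(x,t)=0$ (inherited because $u^\lambda=\partial_x^{-1}G^\lambda+o(1)$ with $G^\lambda\geqslant0$) forces it to vanish, so $\overline u$ is a bounded, spatially continuous weak solution of the inviscid Burgers equation $\overline u_t+\overline u\,\overline u_x=0$ with $\overline u_x=\overline G\geqslant0$ --- hence automatically its entropy solution. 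Using scaling once more --- the family $\{u^\lambda\}_{\lambda>0}$ is stable under $u^\lambda\mapsto u^{\lambda\mu}$, so every subsequential limit is $0$-homogeneous, $\overline u(x,t)=V(x/t)$ --- the Burgers equation reduces to $V'(\xi)\big(V(\xi)-\xi\big)=0$, whose only bounded, continuous, nondecreasing solution with $V(-\infty)=0$ is $V(\xi)=0$ for $\xi\leqslant0$, $V(\xi)=\xi$ for $0\leqslant\xi\leqslant M_G$, $V(\xi)=M_G$ for $\xi\geqslant M_G$; the junction value is pinned by
\begin{equation}
\lim_{x\to\infty}\overline u(x,t)=\lim_{x\to\infty}u^\lambda(x,t)=\big\|G(\cdot,\lambda t)\big\|_1=M_G,
\end{equation}
where $\widehat{\Lambda^\alpha\rho}(0)=0$ removes the $\Lambda^\alpha\rho$ contribution and the $L^1$ mass of $G$ is conserved. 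This produces $\overline u$ and $\overline G$ as in \eqref{orho}--\eqref{orho:u}; finally $\overline\rho$ and $\overline G$ solve the \emph{same} transport equation $w_t+(\overline u w)_x=0$ with initial traces $M_\rho\delta_0$ and $M_G\delta_0$ as $t\to0^+$ (checked from mass conservation together with the fact that the supports shrink to $\{0\}$), so $\overline\rho=(M_\rho/M_G)\,\overline G$. Since every subsequential limit equals this explicit solution, the whole family converges, which is \eqref{eq:WeakLambdaConvergence}.

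The principal obstacle is the strong compactness of $\{u^\lambda\}$ in $C\big([t_1,t_2],L^q([-R,R])\big)$ \emph{uniformly in $\lambda$}: it requires extracting $\lambda$-independent fractional-smoothness and time-regularity bounds from the identity $\Lambda^{-\alpha}u^\lambda_x=\lambda^{-\alpha}\rho^\lambda+\Lambda^{-\alpha}G^\lambda$ and the momentum equation, which is precisely the place where $\alpha\in(0,1)$ is used. A secondary technical point, since a general $\rho_0\in\Li\cap\Lp{1}$ need not be compactly supported, is to justify $\lim_{x\to\infty}u^\lambda\to M_G$ and the Dirac initial traces under the weak limit; this rests on a uniform tail (non-concentration) estimate for $\rho^\lambda$ and $G^\lambda$ coming from finite speed of propagation (available since $u\in L^\infty$) together with the decay rates \eqref{eq:RhoGDecayEstim}.
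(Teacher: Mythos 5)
Your overall architecture (rescaling, $\lambda$-independent bounds from \eqref{eq:RhoGDecayEstim}, Aubin--Lions--Simon compactness for $u^\lambda$ via the constitutive relation, limit passage in the weak formulations with the $\lambda^{-\alpha}$ terms vanishing, and the final linear-transport argument giving $\overline\rho=(M_\rho/M_G)\overline G$) matches the paper's proof. The divergence --- and the gap --- is in how you identify $\overline u$ as the rarefaction wave \eqref{orho:u}. You obtain the Burgers equation by eliminating $\overline G=\overline u_x$ from the limit system and then pin down the solution by a self-similarity argument: ``the family is stable under $u^\lambda\mapsto u^{\lambda\mu}$, so every subsequential limit is $0$-homogeneous.'' This is circular. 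If $u^{\lambda_k}\to\overline u$ along a subsequence, then $u^{\mu\lambda_k}(x,t)=u^{\lambda_k}(\mu x,\mu t)\to\overline u(\mu x,\mu t)$; but $\{\mu\lambda_k\}$ is a \emph{different} subsequence, so you may only conclude $\overline u(\mu x,\mu t)=\overline u(x,t)$ once you already know that all subsequential limits coincide --- which is exactly what is being proved. Without self-similarity, the information you have assembled (bounded, continuous, nondecreasing entropy solution of Burgers with $\overline u(-\infty,t)=0$ and $\overline u(+\infty,t)=M_G$) does \emph{not} single out \eqref{orho:u}: any translate $V\bigl((x-x_0)/t\bigr)$, and many non-self-similar monotone entropy solutions, satisfy the same constraints. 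The missing ingredient is the initial trace of $\overline u$.

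The paper closes this by never discarding the rescaled momentum equation: it passes to the limit in the term $\int_\R u_0(\lambda_k x)\varphi(x,0)\dx$, using the hypothesis $u_0=\partial_x^{-1}G_0+\partial_x^{-1}\Lambda^\alpha\rho_0$ to show $u_0(\lambda_k x)\to M_G\mathbf{1}_{\{x>0\}}$ pointwise (the $\partial_x^{-1}\Lambda^\alpha\rho_0$ part tends to $0$ because it is a Riesz-type convolution $K_\alpha*\rho_0$ vanishing at infinity), so that $\overline u$ is a weak solution of the Riemann problem $\overline u_t+(\overline u^2/2)_x=0$, $\overline u(\cdot,0)=M_G\mathbf{1}_{\{x>0\}}$; the Oleinik condition $\|\overline u_y(\cdot,s)\|_\infty=\|\overline G(\cdot,s)\|_\infty\leqslant Cs^{-1}$ (inherited from \eqref{eq:RhoGDecayEstim} with $p=\infty$) then gives uniqueness of the entropy solution and hence the explicit formula. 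Your proposal can be repaired along the same lines --- you even have the needed hypothesis on $u_0$ and you gesture at the Dirac trace of $\overline G$ at $t=0^+$, which via $\overline u_x=\overline G$ would also yield the step datum --- but ``the supports shrink to $\{0\}$'' is asserted, not proved, and the self-similarity shortcut cannot substitute for it. A secondary, fixable imprecision: killing the $x$-independent function $c(t)$ in $\overline u_t+\overline u\,\overline u_x=c(t)$ requires a justification of $\overline u(x,t)\to0$ as $x\to-\infty$ uniformly enough to integrate in time; the paper sidesteps this entirely by working with the momentum equation in weak form.
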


\begin{remark}
    The scaling limit  of $u^\lambda=u(\lambda x,\lambda t)$ given by the rarefaction wave $\overline{u}$ in \eqref{orho:u} resembles  the results from the paper \cite{KMX08}
    on the convergence
    toward rarefaction waves 
    of solutions to the fractal Burgers equation 
    $u_t+\Lambda^\alpha u +uu_x=0$, supplemented with step-like initial data such that 
     $\lim\limits_{x\to -\infty} u_0(x) < \lim\limits_{x\to +\infty} u_0(x).$
\end{remark}

\begin{remark}\label{rho:to:Barenblatt}
Let us emphasize that  scaling limits of solutions 
 to problems  \eqref{eq:URhoFractional}--\eqref{eq:iniConv} and  \eqref{eq:URhoG}-\eqref{eq:ini3eq} constructed in Theorem \ref{thm:GlobExist} differ depending 
 on whether $G\equiv 0$ or not. 
 Due to assumption \eqref{eq:InitConComp},
Theorem \ref{thm:asymp} covers the case of nonzero $G$. On the other hand, for $G\equiv 0$, the considered system reduces to the one dimensional
porous medium equation with the  nonlocal pressure \eqref{eq:porous} where a scaling limit was already considered in \cite[Thm.~2.5]{BKM10}.
It is proved in that work, that supplementing the porous medium equation  \eqref{eq:porous} with an initial condition such that  $\int_\R \rho_0(x)\dx=1$, the corresponding solution $\rho = \rho(x,t)$ satisfies
\begin{equation*}
     \lambda \rho(\lambda x, \lambda^{1+\alpha} t) \xrightarrow{\lambda \to \infty} t^{-\frac{1}{\alpha+1}}\Phi_\alpha\left( {x}{t^{-\frac{1}{\alpha+1}}}\right)
\end{equation*}
in a suitable topology.
\end{remark} 

\subsection*{Comments on considered initial conditions}

\begin{figure}[t]
  \centering
        \includegraphics[width =0.45\textwidth]{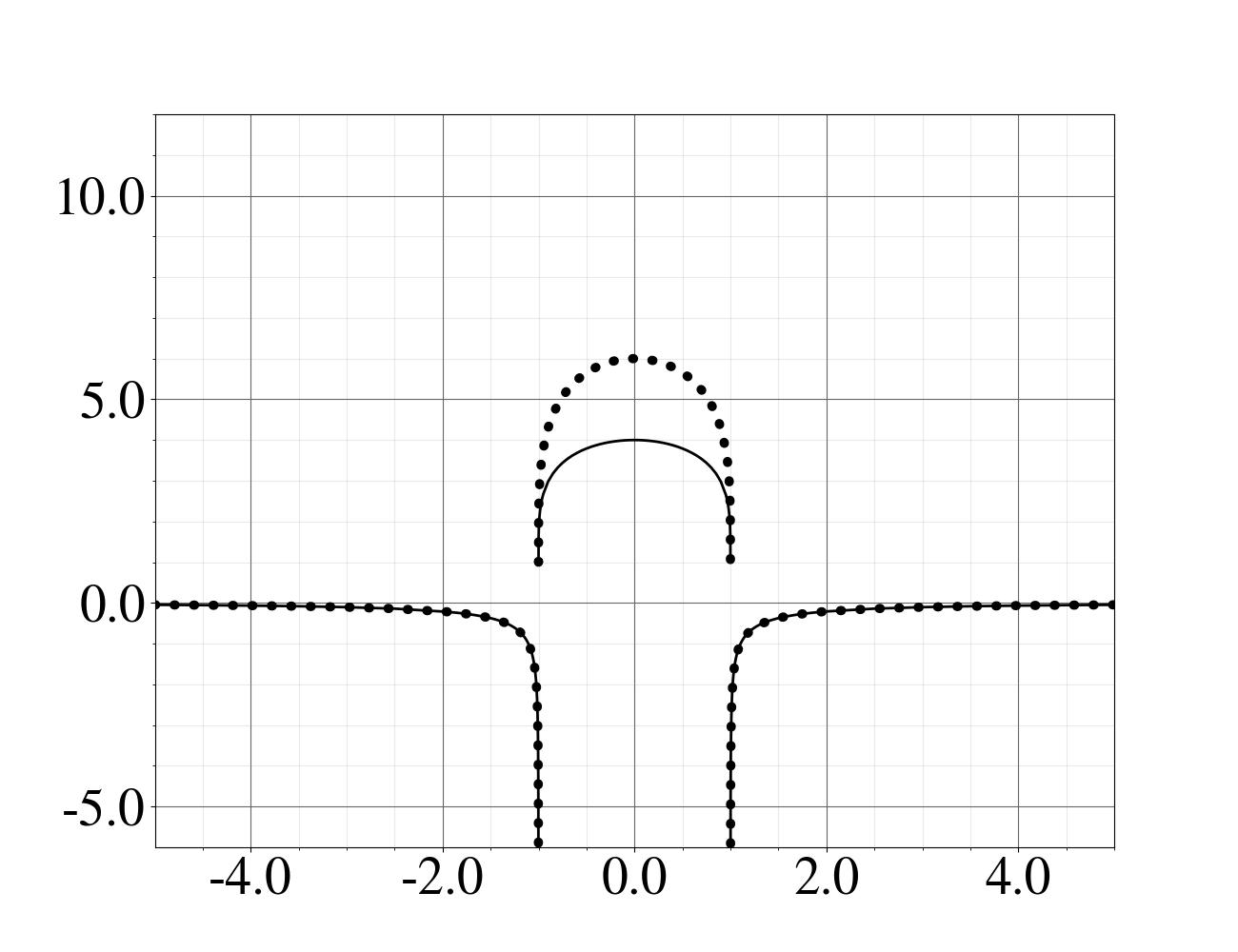}    
     \includegraphics[width = 0.45\textwidth]{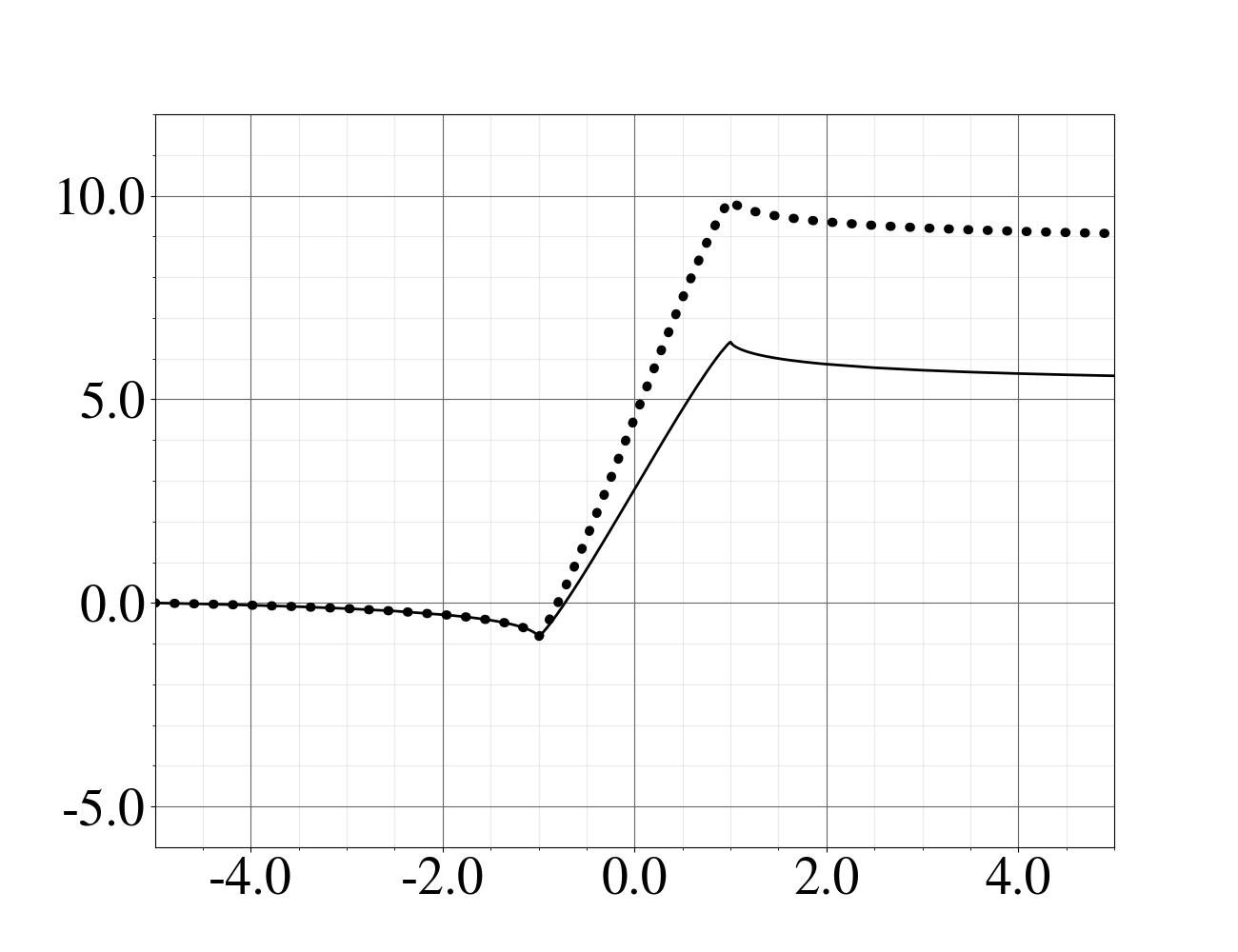}
       \caption{In order to illustrate inequalities \eqref{eq:UBounds} with $\rho_0=\Phi_\alpha$, we plot 
        $a\Phi_\alpha +\Lambda^\alpha \Phi_\alpha$   for $a=3$ (solid line) and $a = 5$ (dotted line), on the left-hand side. 
       The figure on the right-hand side shows the corresponding   $u_0(x)=\int_{-\infty}^x \big(a\Phi_\alpha (y)+\Lambda^\alpha\Phi_\alpha (y)\big)\;dy$.
       }
    \label{fig:rho}
\end{figure}

Here, we discuss assumptions \eqref{ini:exist} and \eqref{eq:InitConComp} imposed on initial conditions $\rho_0, u_0, G_0$.
Recalling that $$G_0=(u_0)_x - \Lambda^\alpha\rho_0$$
we write assumption \eqref{eq:InitConComp}  in the form
    \begin{nalign}
        \label{eq:UBounds}
        b \rho_0(x) + \Lambda^\alpha \rho_0(x)\leqslant (u_0)_x(x)\leqslant a\rho_0(x) + \Lambda^\alpha \rho_0(x)
        \qquad \text{for all} \quad x\in\R.
    \end{nalign}

First, we choose  $\rho_0$ in the form of  the Getoor function $\rho_0=\Phi_\alpha$ given by formulas 
\eqref{profiles}-\eqref{Ka1}.
 Since the Getoor function $\Phi_\alpha$ is supported  in $[-1,1]$, 
     using the properties of  $\Lambda^\alpha \Phi_\alpha (x)$ given by expression \eqref{LaPa}, we obtain that
     assumption \eqref{eq:UBounds} takes the form of two inequalities: 
     \begin{equation} \label{in:as:1}
         b \Phi_\alpha(x) +1\leqslant   \partial_x u_0(x) \leqslant a \Phi_\alpha(x) +1 \qquad \text{for} \quad   |x|\leqslant 1
     \end{equation}
     and 
  \begin{equation}\label{in:as:2}
        \partial_x u_0(x) = \Lambda^\alpha \Phi_\alpha(x) =H(x) \qquad \text{for} \quad   |x|\geqslant 1.
  \end{equation}
Fig.~\ref{fig:rho} illustrates functions satisfying these two conditions. 

  Similarly, if $\rho_0 \in C^1(\R)$ is compactly supported and positive, by the definition of the fractional Laplacian \eqref{eq:frac}, assumption  \eqref{eq:UBounds}
implies
  $$
 (u_0)_x(x)=\Lambda^\alpha\rho_0 (x) \leqslant 0 \quad\text{for all} \quad x\in \R\setminus {\rm supp}\, \rho_0.
$$
Moreover, since $\int_\R \Lambda^\alpha \rho_0(x) \dx =0$,
under the assumptions
$$
u_0=\partial^{-1}_x G_0+\partial^{-1}_x \Lambda^\alpha \rho_0 \quad\text{and} \quad 0\leq G_0\in L^1(\R),
$$
 we obtain 
$$
0=\lim_{x\to -\infty} u_0(x) < \lim_{x\to +\infty} u_0(x) = \int_\R G_0(x)\dx.
$$



\section{Auxiliary results involving non-local operators}
In this section, we will collect results used in the proofs of main theorems.

\begin{lemma}
    \label{lem:FracEstim}
    Let $\alpha\in (0,1)$. There exists a constant $C>0$ such that the inequality
    \begin{equation}
        \|\Lambda^{\alpha} f\|_1\leq C\left(\|f\|_1+\|\partial_x^2 f\|_1\right)
    \end{equation}
    holds true for all $f\in W^{2,1}(\R)$.
\end{lemma}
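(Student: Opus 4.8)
The plan is to estimate the $L^1$-norm of $\Lambda^\alpha f$ by splitting the defining singular integral into a near-diagonal part and a far part, using the smoothness of $f$ to control the first and the integrability of $f$ (together with decay of the kernel) to control the second. Concretely, writing
\begin{equation}
\Lambda^\alpha f(x) = \int_{\R} \frac{f(x)-f(y)}{|y-x|^{1+\alpha}}\dy = \int_{|z|<1} \frac{f(x)-f(x-z)}{|z|^{1+\alpha}}\dz + \int_{|z|\geqslant 1} \frac{f(x)-f(x-z)}{|z|^{1+\alpha}}\dz \equiv I_1(x)+I_2(x),
\end{equation}
I would work on each piece separately, take $L^1$-norms in $x$, and apply Fubini (Minkowski's integral inequality) to pull the $x$-integral inside the $z$-integral.

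\textbf{Near-diagonal part.} For $I_1$, since $\alpha\in(0,1)$ the factor $|z|^{-1-\alpha}$ is too singular to use a first-order Taylor bound directly, so I would symmetrize: using the change of variable $z\mapsto -z$ on half the domain,
\begin{equation}
I_1(x) = \frac{1}{2}\int_{|z|<1} \frac{2f(x)-f(x-z)-f(x+z)}{|z|^{1+\alpha}}\dz,
\end{equation}
and then write the second difference as $2f(x)-f(x-z)-f(x+z) = -\int_0^1\int_{-1}^{1} s\, |z|^2 \, \partial_x^2 f(x+s\sigma z)\,d\sigma\,ds$ up to constants (i.e. the standard integral remainder for the symmetric second difference). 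Taking $\|\cdot\|_1$ in $x$, applying Minkowski's integral inequality, and using translation-invariance of the $L^1$-norm, each inner term contributes $\|\partial_x^2 f\|_1$, while the remaining $z$-integral $\int_{|z|<1} |z|^{2}|z|^{-1-\alpha}\dz = \int_{|z|<1}|z|^{1-\alpha}\dz$ converges because $1-\alpha>-1$. This yields $\|I_1\|_1 \leqslant C\|\partial_x^2 f\|_1$.

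\textbf{Far part.} For $I_2$, I split into two terms: $\int_{|z|\geqslant 1} |z|^{-1-\alpha}\dz\cdot |f(x)|$ and $\int_{|z|\geqslant1} |f(x-z)|\,|z|^{-1-\alpha}\dz$. The first gives $C\|f\|_1$ since $\int_{|z|\geqslant 1}|z|^{-1-\alpha}\dz<\infty$. For the second, taking $\|\cdot\|_1$ in $x$ and using Fubini (or Young's convolution inequality, recognizing this as $|f| * (|z|^{-1-\alpha}\mathbbm{1}_{|z|\geqslant1})$ with the kernel in $L^1$) again gives $C\|f\|_1$. Hence $\|I_2\|_1\leqslant C\|f\|_1$, and combining the two bounds proves the lemma. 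I do not expect a genuine obstacle here; the only point requiring a little care is the symmetrization in $I_1$, which is essential because the naive first-order Taylor estimate $|f(x)-f(x-z)|\leqslant |z|\|\partial_x f\|_\infty$ both uses the wrong norm and fails to make $\int_{|z|<1}|z|\cdot|z|^{-1-\alpha}\dz$ converge for $\alpha$ close to $1$ — the second-order symmetric difference is what supplies the extra power of $|z|$ needed for integrability while keeping everything in $L^1$.
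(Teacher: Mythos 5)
Your proof is correct, and it takes a genuinely different route from the paper's. You work directly with the singular-integral definition of $\Lambda^\alpha$: the symmetrization of the near-diagonal part, which converts the first difference into the symmetric second difference $2f(x)-f(x+z)-f(x-z)$ with its integral-remainder representation in terms of $\partial_x^2 f$, is exactly the right device, and after Minkowski's integral inequality the $z$-integral $\int_{|z|<1}|z|^{1-\alpha}\,dz$ converges for every $\alpha\in(0,1)$; the far part is a convolution with an integrable truncated kernel plus a multiple of $f$, so Young's inequality finishes it. (The only point worth making explicit is that the second-order Taylor formula with integral remainder is legitimate for $f\in W^{2,1}(\R)$ because $\partial_x f$ is absolutely continuous; alternatively, prove the estimate for $f\in C_c^\infty$ and conclude by density.) The paper argues instead on the Fourier side: it reduces the claim to the $L^1$-boundedness of the pseudo-differential operator $\Lambda^\alpha(I-\partial_x^2)^{-1}$, i.e. to showing that the multiplier $m(\xi)=|\xi|^\alpha/(1+|\xi|^2)$ has an integrable inverse Fourier transform, which is verified by bounding $\widecheck{m}$ on $\{|x|\leqslant 1\}$ via $\|m\|_1$ and on $\{|x|>1\}$ via H\"older and Hausdorff--Young applied to $\partial_\xi m\in L^p(\R)$ for $p<1/(1-\alpha)$. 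Your approach is more elementary and self-contained, avoids Fourier multiplier theory entirely, and yields explicit constants; the paper's approach is shorter given that machinery and sits naturally alongside the other multiplier-type estimates it uses (Lemmas \ref{lem:UEstim} and \ref{thm:uFracEst}). Both restrictions to $\alpha\in(0,1)$ are essential: in your argument it makes both $\int_{|z|<1}|z|^{1-\alpha}\,dz$ and $\int_{|z|\geqslant1}|z|^{-1-\alpha}\,dz$ finite, and in the paper's it guarantees $\partial_\xi m\in L^p$ for some $p>1$.
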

\begin{proof}
    It suffices to prove the inequality
    \begin{equation}
     \|\Lambda^{\alpha} f\|_1\leq C\|f - \partial_x^2 f\|_1
    \end{equation}
    which is equivalent to the estimate
    \begin{equation}
         \|\Lambda^{\alpha}(I-\partial^2_x)^{-1} g\|_1\leqslant C\|g\|_1
    \end{equation}
    for each $g\in L^1(\R)$. The  operator $\Lambda^{\alpha}(I-\partial^2_x)^{-1}$ is a pseudo-differential operator 
    and, because $\alpha\in (0,1)$, its 
    symbol $m(\xi)$
    satisfies:
    \begin{nalign}
        m(\xi) = \frac{|\xi|^\alpha}{1+|\xi|^2} \in L^1(\R)\cap L^\infty(\R)
    \end{nalign} 
and $\partial_\xi m  \in \Lp{}$ for each $p\in \big[1, \, 1/(1-\alpha)\big)$. We show below, that $\widehat{m}\in \Lp{1}$  (notice that $\widehat m=\widecheck m$)
    which will complete the proof because, by the Young inequality for convolution, we have
    \begin{nalign}
        \|\Lambda^{\alpha}(I-\partial^2_x)^{-1} g\|_1 = \No{(m\widehat{g})\widecheck{\,}} = \No{\widecheck{m} * g} \leqslant \No{\widecheck{m}} \No{g}.
    \end{nalign}
    In order to show that $\No{\widecheck{m}}$ is finite, we  decompose
    \begin{nalign}
        \int_\R |\widecheck{m}(x)| \dx = \int_{|x|\leqslant 1} |\widecheck{m}(x)| \dx + \int_{|x|>1} |\widecheck{m}(x)| \dx.
    \end{nalign}
Then
    \begin{nalign}
        \int_{|x|\leqslant 1} |\widecheck{m}(x)| \dx \leqslant 2 \|\widecheck{m}\|_\infty \leqslant C \|m\|_1.  
    \end{nalign}
    Next, by properties of the Fourier transform, the H\"older and Hausdorff-Young inequalities 
    with $p\in[1,1/(1-\alpha))$ and $q = 1- 1/p$, we obtain
    \begin{nalign}
        \int_{|x|>1} |\widecheck{m}(x)| \dx &= \int_{|x|>1} \frac{|x \widecheck{m}(x)|}{|x|}  \dx 
        = \int_{|x|>1} \frac{|\widecheck{(\partial_\xi m)}(x)|}{|x|} \dx \\
        &\leqslant \left\| |x|^{-1} \mathbf{1}_{|x|>1} \right\|_p \left\| \big(\widecheck{\partial_\xi m}\big) \right\|_q 
        \leqslant C \left\| {\partial_\xi m} \right\|_p.
    \end{nalign}

\end{proof}

\begin{lemma}
    \label{lem:UEstim} 
    Let $\alpha \in (0,1)$. 
    For all 
    $$p\in \left(1,\ \frac1{1-\alpha}\right) \quad\text{and}\quad 
    q\in (p,\infty) \quad \text{such that} \quad \frac{1}{q}=\frac{1}{p}-(1-\alpha)
    $$
    there exist a constant $C=C(p,q,\alpha)>0$ such that for arbitrary $f \in \Lp{p} $ we have
    \begin{nalign}\label{eq:est1}
       \| \partial^{-1}_x \Lambda^\alpha f\|_q \leqslant C \|f\|_p.
    \end{nalign}
    Moreover, for another constant $C>0$, we have
    \begin{nalign}\label{eq:est2}
       \| \partial^{-1}_x \Lambda^\alpha f\|_\infty \leqslant C \big( \|f\|_1+ \|f\|_\infty \big).
    \end{nalign}
\end{lemma}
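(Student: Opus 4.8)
The plan is to reduce both inequalities to one structural fact about the operator $T:=\partial_x^{-1}\Lambda^\alpha$: up to a multiplicative constant it is convolution with the explicit homogeneous kernel
\[
K(x)=c_\alpha\,\frac{\operatorname{sgn}(x)}{|x|^{\alpha}},\qquad x\neq0 .
\]
On the Schwartz class $\mathcal S(\R)$ the operator $T$ acts as the Fourier multiplier with symbol $|\xi|^{\alpha}/(i\xi)$, which is homogeneous of degree $\alpha-1\in(-1,0)$; hence its inverse Fourier transform is a homogeneous distribution of degree $-\alpha\in(-1,0)$, and because $0<\alpha<1$ this distribution is precisely the locally integrable odd function $K$ above, with no point mass or derivative-of-delta at the origin. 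An equivalent route, avoiding the direct Fourier transform of a homogeneous distribution, is to factor $T=\mathcal H\circ\Lambda^{-(1-\alpha)}$, where $\mathcal H$ is the Hilbert transform and $\Lambda^{-(1-\alpha)}$ is the Riesz potential of order $1-\alpha$, whose convolution kernel is a constant multiple of $|x|^{-(1-(1-\alpha))}=|x|^{-\alpha}$; applying $\mathcal H$, which commutes with dilations and sends even functions to odd ones, turns $|x|^{-\alpha}$ into $c_\alpha\operatorname{sgn}(x)|x|^{-\alpha}$. Once $Tf=K*f$ is checked on $\mathcal S(\R)$ by taking Fourier transforms, it extends by density to $f\in L^p(\R)$ with $p$ in the stated range, and to $f\in L^1(\R)\cap L^\infty(\R)$, the convolution being absolutely convergent in those cases.

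To prove \eqref{eq:est1}, note that $K$ belongs to the weak Lebesgue space $L^{1/\alpha,\infty}(\R)$: its super-level sets $\{x:|K(x)|>\lambda\}$ are bounded intervals whose length is a fixed constant times $\lambda^{-1/\alpha}$, so $\|K\|_{L^{1/\alpha,\infty}}<\infty$. Young's convolution inequality in weak Lebesgue spaces then gives
\[
\|\partial_x^{-1}\Lambda^\alpha f\|_q=\|K*f\|_q\leqslant C\,\|K\|_{L^{1/\alpha,\infty}}\,\|f\|_p
\]
for all $p,q\in(1,\infty)$ linked by $\tfrac{1}{q}+1=\tfrac{1}{p}+\alpha$, that is $\tfrac{1}{q}=\tfrac{1}{p}-(1-\alpha)$; here the constraint $q<\infty$ is exactly $p<1/(1-\alpha)$, and $q>p$ is automatic since $1-\alpha>0$, so the entire admissible range of the lemma is covered. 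Equivalently, one may invoke the Hardy--Littlewood--Sobolev inequality for the Riesz potential $\Lambda^{-(1-\alpha)}\colon L^p(\R)\to L^q(\R)$ followed by the $L^q$-boundedness of $\mathcal H$.

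To prove \eqref{eq:est2}, take $f\in L^1(\R)\cap L^\infty(\R)$ and split the kernel as $K=K\mathbf{1}_{\{|x|<1\}}+K\mathbf{1}_{\{|x|\geqslant1\}}=:K_1+K_2$. Since $\alpha<1$, the near piece is integrable, $K_1\in L^1(\R)$ with $\|K_1\|_1=2c_\alpha/(1-\alpha)$, while the far piece is bounded, $K_2\in L^\infty(\R)$ with $\|K_2\|_\infty=c_\alpha$. Young's inequality then yields $\|K_1*f\|_\infty\leqslant\|K_1\|_1\|f\|_\infty$ and $\|K_2*f\|_\infty\leqslant\|K_2\|_\infty\|f\|_1$, and adding the two estimates gives \eqref{eq:est2}.

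The only genuinely delicate point is the first step: identifying $T$ with convolution against $K$, since neither the multiplier $|\xi|^{\alpha}/(i\xi)$ nor the kernel $K$ lies in $L^1(\R)$, so the identification must be carried out on $\mathcal S(\R)$ — via the classical formula for the Fourier transform of homogeneous distributions, or via the Hilbert-transform/Riesz-potential factorization — and then propagated by density; the precise value of $c_\alpha$ is irrelevant, only that it is finite and the kernel has the stated form. After that, \eqref{eq:est1} is just the weak-type Young inequality and \eqref{eq:est2} is merely the decomposition of $K$ into an $L^1$ part (paired with $\|f\|_\infty$) and an $L^\infty$ part (paired with $\|f\|_1$), both of which are routine.
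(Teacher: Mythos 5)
Your proof is correct and follows essentially the same route as the paper: identify $\partial_x^{-1}\Lambda^\alpha$ with $\mathcal H\circ\Lambda^{-(1-\alpha)}$ (equivalently, convolution with a homogeneous kernel of degree $-\alpha$), obtain \eqref{eq:est1} from the Hardy--Littlewood--Sobolev/weak-Young inequality, and obtain \eqref{eq:est2} by splitting the kernel into an $L^1$ piece near the origin and an $L^\infty$ piece at infinity. The only cosmetic difference is that you justify the kernel identification by a Fourier-multiplier computation on $\mathcal S(\R)$, whereas the paper first verifies an $L^1(\R\times\R)$ integrability condition and uses Fubini to write $\partial_x^{-1}\Lambda^\alpha f=\Lambda^\alpha(\partial_x^{-1}f)$; both are legitimate.
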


\begin{proof}
    Suppose first that $f \in C_c^\infty(\R)$ and let us show that the integrand in the fractional Laplacian \eqref{eq:frac} satisfies
    \begin{nalign}
        \label{eq:InL1}
        \frac{f(x)-f(y)}{|x-y|^{\alpha+1}} \in L^1(\R\times \R).
    \end{nalign}
    We fix $K>0$ such that $f(x) = 0$ for $|x|>K$, hence, 
    the integrand in \eqref{eq:InL1}
    is equal to  $0$  for $|x|>K$ and $|y|>K$.
Now, we consider the following three cases. 
    
    If $|x|\leqslant K$ and $|y|\leqslant 2K$ (or, analogously,  $|x|\leqslant 2K$ and $|y|\leqslant K$), by the mean value argument, we have
    \begin{nalign}
        \frac{|f(x)-f(y)|}{|y-x|^{\alpha+1}} \leqslant \frac{\|f_x\|_\infty}{|y-x|^\alpha},
    \end{nalign}
    where the right hand side is integrable on the square $[-2K,2K]\times[-2K,2K]$. Indeed, by the direct calculation we have
    \begin{nalign}
        \int_{-2K}^{2K} \int_{-2K}^{2K} \frac{1}{|x-y|^\alpha} \dy \dx &= 2\int_{-2K}^{2K} \int_{-2K}^x \frac{1}{(x-y)^\alpha} \dy \dx  \\
        &=  \frac{2}{1 - \alpha }\int_{-2K}^{2K} (x+2K)^{1-\alpha} \dx =\frac{2(4K)^{2-\alpha}}{(1-\alpha)(2-\alpha)}.
    \end{nalign}
    
    Next, in the case $|x|\leqslant K$ and $|y|>2K$ (or, for $|y|\leqslant K$ and $|x|>2K$) using the triangle inequality we obtain the estimate
    \begin{nalign}
        \frac{|f(x)-f(y)|}{|x-y|^{\alpha+1}} \leqslant \frac{\Ni{f}}{|x-y|^{\alpha+1}} \leqslant \frac{\Ni{f}}{(|y| - |x|)^{\alpha+1}} \leqslant \frac{\Ni{f}}{(|y| - K)^{\alpha+1}}
    \end{nalign}
    where the last term is integrable on the strip $[-K, K] \times \big(\R \setminus [-2K,2K]\big)$.

    Now, we write the fractional Laplacian \eqref{eq:frac} in the equivalent form
    \begin{nalign}
        \Lambda^\alpha f(x)= \int_{\mathbb{R}} \frac{f(x)-f(x+y)}{|y|^{\alpha+1}}  \dy
    \end{nalign}
    and we introduce the primitive function $F(x) = \partial^{-1} f(x)=\int_{-\infty}^x f(x) \dx$. 
    By the intergrability property \eqref{eq:InL1} and the  Fubini theorem, we conclude
    \begin{nalign}
        \label{eq:InFrac}
        \partial_x^{-1} \Lambda^\alpha f(x) = \int_{\mathbb{R}} \int_{-\infty}^x \frac{f(x)-f(x+y)}{|y|^{\alpha+1}} \dx \dy = \int_{\mathbb{R}}  \frac{F(x)-F(x+y)}{|y|^{\alpha+1}} \dy
        =\Lambda^\alpha F(x).
    \end{nalign}

  Now, since $\alpha-1\in(-1,0)$, we may use the Riesz potential operator \cite[Ch.~V]{stein1970singular} given by the formula 
    \begin{nalign}\label{Riesz}
        \Lambda^{\alpha-1 } \varphi (x) = C(\alpha) \int_\R \frac{1}{|x-y|^{\alpha}} \varphi(y) \dy 
    \end{nalign}
    which, by properties of the Fourier transform of the fractional Laplacian, satisfies 
\begin{equation}\label{PLH}
   \partial_x^{-1} \Lambda^\alpha f = \Lambda^\alpha F =\Lambda^{\alpha-1} \mathcal{H} \partial_x F= \Lambda^{\alpha-1} \mathcal{H} f,
\end{equation}
 where $\mathcal{H}$ is the Hilbert transform,~\textit{i.e.}~the pseudo-differential operator with the symbol $-i \, \text{sgn}(\xi)$, thus satisfying 
 $\mathcal{H} \partial_x=\Lambda$. 
 The Hilbert transform is a bounded linear operator on $\Lp{}$ for each $p \in (1,\infty)$, hence, 
inequality \eqref{eq:est1} is an immediate consequence of the Hardy-Littlewood-Sobolev inequality for fractional integration 
\cite[Ch.~V, Thm.~1]{stein1970singular}. 
    We complete the proof of inequality \eqref{lem:UEstim} for arbitrary  $f \in \Lp{p}$   by the usual density argument.

It follows from relation \eqref{PLH}
that 
$$ \widehat{(\partial_x^{-1} \Lambda^\alpha f)}(\xi) = \frac{- i\,\text{sgn}(\xi)}{|\xi|^{1-\alpha}}\widehat{f} (\xi).$$
Hence $ \partial_x^{-1} \Lambda^\alpha f = K_\alpha*f$,
where $K_\alpha=K_\alpha(x)$ is a locally integrable, homogeneous function of degree $-\alpha\in (-1,0)$.
  Using the decomposition
 $K_\alpha=K_\alpha^1+K_\alpha^2$
with 
$$
K_\alpha^1(x) = 
\begin{cases}
    K_\alpha(x), & |x|\leq 1,\\
    0,& |x|\geq 1
\end{cases}
\quad \text{and}\quad K_\alpha^2=K_\alpha-K_\alpha^1
$$
we prove inequality \eqref{eq:est2} immediately 
by using the H\"older inequality .
\end{proof}

\begin{lemma} 
    \label{thm:uFracEst} 
    Fix  $\alpha \in(0,1)$,  $p\in (1,1/\alpha)$ and $q \in (p,\infty)$  such that $1/q = 1/p - \alpha$.
    If $G \in \Lp{p}$,  $\rho \in \Lp{q}$, and
    $u_x = G + \Lambda^\alpha \rho$ in the sense of tempered distributions
    then $\Lambda^{1-\alpha}u \in \Lp{q}$ together with the estimate 
    \begin{nalign}
        \|\Lambda^{1-\alpha} u\|_q \leqslant C\big( \|G\|_p + \|\rho\|_q \big)
    \end{nalign}
    with a constant $C>0$.
\end{lemma}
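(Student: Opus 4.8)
The plan is to obtain $\Lambda^{1-\alpha}u$ from the hypothesis $u_x=G+\Lambda^\alpha\rho$ by applying the operator $\Lambda^{-\alpha}\mathcal{H}$, and then to estimate the two resulting terms exactly with the tools used for Lemma \ref{lem:UEstim}. From $\mathcal{H}\partial_x=\Lambda$ (see the proof of Lemma \ref{lem:UEstim}) one has $\Lambda^{-\alpha}\mathcal{H}\partial_x=\Lambda^{-\alpha}\Lambda=\Lambda^{1-\alpha}$, and since $\Lambda^{-\alpha}$ commutes with $\mathcal{H}$ while $\Lambda^{-\alpha}\Lambda^\alpha$ is the identity, applying $\Lambda^{-\alpha}\mathcal{H}$ to both sides of $u_x=G+\Lambda^\alpha\rho$ gives
\begin{nalign}
   \Lambda^{1-\alpha}u=\Lambda^{-\alpha}\mathcal{H}G+\mathcal{H}\rho .
\end{nalign}
I would make this rigorous by first assuming $G,\rho\in\mathcal{S}(\R)$, taking the explicit primitive $u=\partial_x^{-1}G+\partial_x^{-1}\Lambda^\alpha\rho$ and verifying the identity on the Fourier side (here $\partial_x^{-1}\Lambda^\alpha=\Lambda^{\alpha-1}\mathcal{H}$ from \eqref{PLH} and $\Lambda\partial_x^{-1}=\mathcal{H}$ do the work), and then passing to general $G\in L^p(\R)$, $\rho\in L^q(\R)$ using density of $\mathcal{S}(\R)$.

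Granting the identity, the bound is immediate from two classical facts, both already invoked in the proof of Lemma \ref{lem:UEstim}. First, the Hilbert transform $\mathcal{H}$ is bounded on $L^r(\R)$ for every $r\in(1,\infty)$, hence $\|\mathcal{H}\rho\|_q\leqslant C\|\rho\|_q$. Second, since the hypotheses give precisely $1<p<q<\infty$ with $1/q=1/p-\alpha$, the Riesz potential $\Lambda^{-\alpha}$ maps $L^p(\R)$ into $L^q(\R)$ by \cite[Ch.~V, Thm.~1]{stein1970singular}, so $\|\Lambda^{-\alpha}\mathcal{H}G\|_q\leqslant C\|\mathcal{H}G\|_p\leqslant C\|G\|_p$. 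Adding the two estimates yields $\|\Lambda^{1-\alpha}u\|_q\leqslant C\big(\|G\|_p+\|\rho\|_q\big)$, and the right-hand side being an honest $L^q(\R)$ function also gives $\Lambda^{1-\alpha}u\in L^q(\R)$.

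The one point deserving care is that $u$ is determined by $u_x$ only up to an additive constant, so one must check that $\Lambda^{1-\alpha}u$ is unambiguous; this is exactly where $\alpha<1$ enters. Indeed $\Lambda^{1-\alpha}$ annihilates constants — immediately from the singular-integral formula \eqref{eq:frac} with exponent $1-\alpha\in(0,1)$, or, on the Fourier side, because the multiplier $|\xi|^{1-\alpha}$ vanishes continuously at $\xi=0$ — so every tempered-distribution primitive of $G+\Lambda^\alpha\rho$ yields the same $\Lambda^{1-\alpha}u$, namely the $L^q(\R)$ function produced above as the limit of the Schwartz regularisations. I expect this bookkeeping about primitives modulo constants to be the only genuinely delicate step; the analytic heart of the argument is nothing more than the Hardy--Littlewood--Sobolev estimate for $\Lambda^{-\alpha}$ combined with the $L^p$-boundedness of the Hilbert transform, both already set up for Lemma \ref{lem:UEstim}.
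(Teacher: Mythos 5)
Your proposal is correct and follows essentially the same route as the paper: both rewrite the relation $u_x = G + \Lambda^\alpha\rho$ using $\mathcal{H}\partial_x=\Lambda$ and $\Lambda^{-\alpha}\Lambda^\alpha=\mathrm{id}$ to get $\Lambda^{1-\alpha}u = \mathcal{H}\Lambda^{-\alpha}G + \mathcal{H}\rho$ (up to sign conventions), and then conclude via the Hardy--Littlewood--Sobolev inequality for $\Lambda^{-\alpha}:L^p\to L^q$ together with the $L^q$-boundedness of the Hilbert transform. Your extra remarks on the density argument and on $\Lambda^{1-\alpha}$ annihilating the additive constant in $u$ are sound and only make explicit what the paper leaves implicit.
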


\begin{proof}
The fractional Laplacian \eqref{eq:frac} and the Riesz potential operator \eqref{Riesz}
satisfy  the equation $\Lambda^{-\alpha}\Lambda^{\alpha} \varphi = \varphi$ in the sense of tempered distributions, see {\it e.g.} \cite[Ch.~V]{stein1970singular}.
    Thus,  the equation $u_x = G - \Lambda^\alpha \rho$ may be written in the form
    \begin{nalign}
        \Lambda^{-\alpha} \partial_x u = \Lambda^{-\alpha}G - \rho.
    \end{nalign} 
    Next, in the same way as in the proof of Lemma \ref{lem:UEstim}, we obtain 
    \begin{nalign}
         \Lambda^{1-\alpha} u = -\mathcal{H} \Lambda^{-\alpha} \partial_x  u = -\left( \mathcal{H} \Lambda^{-\alpha}G - \mathcal{H} \rho \right)
    \end{nalign}
    and the proof is completed by  applying the the Hardy-Littlewood-Sobolev inequality to $\|\Lambda^{-\alpha} G\|_q$.
\end{proof}

For $p \in(1,\infty)$ and $s\in \R$, we recall the classical definition of the Bessel potential space 
\begin{nalign}
    H_p^s\left(\mathbb{R}\right)=\left\{u \in \mathcal{S}^{\prime} : 
    \Big(1+|\xi|^2)^{\frac{s}{2}}  \widehat{u}\Big)^{\widecheck{\ \;  \; }} \in \Lp{}
    \right\},
\end{nalign}
where $\mathcal{S}^{\prime}$ is the space of tempered distributions on $\R$.
This is a Banach space equipped with the norm
\begin{nalign}
    \|u\|_{H_p^s\left(\mathbb{R}\right)} = \left\|
    \left(\left(1+|\xi|^2\right)^{\frac{s}{2}} \widehat{u}\right)^{\widecheck{\ \;  \; }}\right\|_\Lp{}.
\end{nalign}
Then, the Bessel potential space  for a bounded domain $\Omega\subseteq \R$  is defined by 
\begin{nalign}
    H_p^s\left(\Omega\right) =\left\lbrace u\big|_\Omega: u\in H_p^s\left(\mathbb{R}\right) \right\rbrace
\end{nalign}
with the norm
\begin{nalign}
    \label{eq:BesselNorm}
    \|u\|_{H_p^s(\Omega)}=\inf_{{U \in H_p^s\left(\mathbb{R}\right),  U|_{\Omega}=u,}} \|U\|_{H_p^s\left(\mathbb{R}\right)}.
\end{nalign}

\begin{lemma}
    \label{thm:vfracestim}
    Assume that $v\in \Lp{q} $ and $\Lambda^{1-\alpha} v\in \Lp{q}$ for some $q\in (1,\infty)$. Then $v\in H_q^{1-\alpha}\big([-R,R]\big)$ for each $R>0$ together with the estimate
    \begin{nalign}
        \|v\|_{H^{1-\alpha}_q([-R,R])} \leqslant C \left( \|v\|_q + \|\Lambda^{1-\alpha} v\|_q \right).
    \end{nalign}
\end{lemma}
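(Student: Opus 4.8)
The plan is to establish the stronger statement that $v$ itself belongs to $H_q^{1-\alpha}(\R)$, with the bound
\[
\|v\|_{H_q^{1-\alpha}(\R)}\leqslant C\big(\|v\|_q+\|\Lambda^{1-\alpha}v\|_q\big),
\]
where $C$ depends only on $q$ and $\alpha$. Then the asserted estimate on $[-R,R]$ follows immediately for every $R>0$, with the same constant: in the infimum \eqref{eq:BesselNorm} defining the restriction norm it suffices to take the extension $U=v$, which gives $\|v\|_{H_q^{1-\alpha}([-R,R])}\leqslant\|v\|_{H_q^{1-\alpha}(\R)}$.

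To prove the global bound, recall that $v\in H_q^{1-\alpha}(\R)$ exactly when the inverse Fourier transform of $\xi\mapsto(1+|\xi|^2)^{(1-\alpha)/2}\widehat v(\xi)$ lies in $\Lp{q}$, and that quantity is precisely $\|v\|_{H_q^{1-\alpha}(\R)}$. I would factor the Bessel symbol as
\[
(1+|\xi|^2)^{\frac{1-\alpha}{2}}=m(\xi)\big(1+|\xi|^{1-\alpha}\big),\qquad
m(\xi):=\frac{(1+|\xi|^2)^{\frac{1-\alpha}{2}}}{1+|\xi|^{1-\alpha}}.
\]
Because the inverse Fourier transform of $(1+|\xi|^{1-\alpha})\widehat v(\xi)$ is exactly $v+\Lambda^{1-\alpha}v\in\Lp{q}$, it suffices to show that the Fourier multiplier operator $T_m$ with symbol $m$ is bounded on $\Lp{q}$ for each $q\in(1,\infty)$: granting this, $T_m\big(v+\Lambda^{1-\alpha}v\big)\in\Lp{q}$ has Fourier transform $(1+|\xi|^2)^{(1-\alpha)/2}\widehat v(\xi)$, hence $v\in H_q^{1-\alpha}(\R)$ with
\[
\|v\|_{H_q^{1-\alpha}(\R)}=\|T_m(v+\Lambda^{1-\alpha}v)\|_q\leqslant C\|v+\Lambda^{1-\alpha}v\|_q\leqslant C\big(\|v\|_q+\|\Lambda^{1-\alpha}v\|_q\big).
\]

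The $\Lp{q}$-boundedness of $T_m$ I would obtain from the one-dimensional Mikhlin--H\"ormander multiplier theorem, which requires only that $m\in C^1(\R\setminus\{0\})$ and $|m(\xi)|+|\xi\,\partial_\xi m(\xi)|\leqslant C$ for $\xi\neq 0$. Here $m$ is $C^\infty$ and strictly positive on $\R\setminus\{0\}$, extends continuously by $m(0)=1$, and $m(\xi)\to 1$ as $|\xi|\to\infty$ (both numerator and denominator are of order $|\xi|^{1-\alpha}$ there), so $m$ is bounded, and $\xi\,\partial_\xi m$ is bounded both for $|\xi|$ large and on compact subsets of $\R\setminus\{0\}$. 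The only point that genuinely needs care is the behaviour near $\xi=0$: there $\partial_\xi(1+|\xi|^{1-\alpha})$ is of size $|\xi|^{-\alpha}$, which is singular, but after multiplication by $\xi$ it becomes of size $|\xi|^{1-\alpha}$, hence bounded (indeed vanishing at the origin) precisely because $1-\alpha>0$; the smooth factor $(1+|\xi|^2)^{(1-\alpha)/2}$ and the denominator, bounded below by $1$, add nothing singular. This verifies the Mikhlin condition and completes the argument. This last check near the origin, where the assumption $\alpha\in(0,1)$ is used (through $1-\alpha>0$), is the only non-routine ingredient; alternatively one may simply quote the standard equivalence $\|u\|_{H_p^s(\R)}\sim\|u\|_p+\|\Lambda^s u\|_p$ for $s>0$ and $p\in(1,\infty)$ (see {\it e.g.}~\cite{stein1970singular}), which is exactly what the two displays above encode.
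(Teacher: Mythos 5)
Your proposal is correct and follows essentially the same route as the paper: the paper simply invokes the standard Fourier-multiplier equivalence $\|v\|_{H^{1-\alpha}_q(\R)}\leqslant C\big(\|v\|_q+\|\Lambda^{1-\alpha}v\|_q\big)$ from \cite[Ch.~IV]{stein1970singular} and then passes to $[-R,R]$ via the definition of the restriction norm, while you spell out the Mikhlin--H\"ormander verification for the symbol $(1+|\xi|^2)^{(1-\alpha)/2}/(1+|\xi|^{1-\alpha})$ explicitly. The only cosmetic caveat is that with the paper's unnormalized definition \eqref{eq:frac} the symbol of $\Lambda^{1-\alpha}$ is $c_\alpha|\xi|^{1-\alpha}$ rather than $|\xi|^{1-\alpha}$, which merely changes the constant.
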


\begin{proof}
By the  theory of Fourier multipliers on $L^p$-spaces (see \textit{e.g.} \cite[Ch IV]{stein1970singular}), we have
 the inequality     
 \begin{nalign}
        \|v\|_{H^{1-\alpha}_q(\R)} \leqslant C \left( \|v\|_q + \|\Lambda^{1-\alpha} v\|_q \right)
    \end{nalign}
which, by the usual cut-off and the definition of the norm in \eqref{eq:BesselNorm}, completes the proof.
\end{proof}

\begin{lemma}[Compact embedding, see \textit{e.g.}~{\cite[Thm. 4.10.1]{triebel1995}}]
    \label{thm:CompactEmbedding}
    Fix $R>0$. For each $q\in(1,\infty)$ and each $s>0$, the embedding $H^s_q\big([-R,R]\big) \subseteq L^q\big([-R,R]\big)$ is compact.
\end{lemma}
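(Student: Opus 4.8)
Since this is the Rellich--Kondrachov compactness phenomenon for Bessel potential spaces, one option is simply to invoke it from the literature as indicated; let me instead sketch a hands-on argument via the Kolmogorov--Riesz (Fr\'echet--Kolmogorov) compactness criterion in $L^q(\R)$. First I would reduce the range of parameters: because $H^s_q([-R,R])\hookrightarrow H^{s'}_q([-R,R])$ continuously whenever $0<s'\leqslant s$, it suffices to treat $s\in(0,1)$, and it is enough to show that an arbitrary norm-bounded set $\mathcal F\subset H^s_q([-R,R])$ is relatively compact in $L^q([-R,R])$. Using a bounded extension operator $E\colon H^s_q([-R,R])\to H^s_q(\R)$ (which exists because an interval is a bounded Lipschitz domain) followed by multiplication with a fixed cut-off $\chi\in C_c^\infty(\R)$ equal to $1$ on $[-R,R]$ (pointwise multiplication being bounded on $H^s_q(\R)$ for $s\in(0,1)$), I may replace $\mathcal F$ by the set $\{\chi Ef:f\in\mathcal F\}$, which is bounded in $H^s_q(\R)$, consists of functions supported in one fixed compact $K\supseteq[-R,R]$, and whose restrictions to $[-R,R]$ are exactly the elements of $\mathcal F$. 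Since restriction $L^q(\R)\to L^q([-R,R])$ is continuous, relative compactness of this set in $L^q(\R)$ yields the lemma.

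In the Kolmogorov--Riesz criterion the boundedness in $L^q(\R)$ and the uniform decay at infinity (here, common compact support in $K$) are immediate, so the only substantive requirement is the uniform equicontinuity of translations, $\sup_{f\in\mathcal F}\|f(\cdot+h)-f\|_{L^q(\R)}\to 0$ as $h\to0$. This comes down to the quantitative estimate
\[
\|f(\cdot+h)-f\|_{L^q(\R)}\leqslant C\,|h|^{s}\,\|f\|_{H^s_q(\R)}\qquad\text{for }|h|\leqslant1,
\]
with $C=C(q,s)$. I would prove it with a Littlewood--Paley decomposition $f=\sum_{j}\Delta_jf$: on the block $|\xi|\sim2^{j}$ one has $\|\Delta_jf(\cdot+h)-\Delta_jf\|_q\leqslant\min\!\big(2,\,C|h|2^{j}\big)\,\|\Delta_jf\|_q$ --- the factor $|h|2^{j}$ coming from $|e^{ih\xi}-1|\leqslant|h\xi|$ together with the $L^q$-boundedness of the corresponding (smooth, compactly supported) multiplier, the factor $2$ being trivial --- combined with $\|\Delta_jf\|_q\leqslant C\,2^{-js}\|f\|_{H^s_q}$, i.e.\ the embedding $H^s_q=F^s_{q,2}\hookrightarrow B^s_{q,\infty}$. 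Summing over $j$ and splitting the resulting geometric series at $2^{j}\sim|h|^{-1}$ gives $\sum_{j}2^{-js}\min(2,C|h|2^{j})\leqslant C'|h|^{s}$ precisely because $0<s<1$, which is the claimed bound.

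Inserting this estimate into the Kolmogorov--Riesz theorem shows that $\{\chi Ef:f\in\mathcal F\}$ is relatively compact in $L^q(\R)$, and restricting to $[-R,R]$ completes the argument. The only real obstacle is the uniform translation estimate; note in particular that the naive Mikhlin--H\"ormander bound applied to the symbol $(e^{ih\xi}-1)(1+|\xi|^2)^{-s/2}$ does \emph{not} deliver the decay $|h|^{s}$ uniformly in $h$ at high frequencies, which is exactly why the dyadic decomposition (equivalently, working through $B^s_{q,\infty}$) is needed. Everything else --- the extension/cut-off reduction, the boundedness and tightness checks --- is routine.
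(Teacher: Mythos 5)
Your argument is correct. Note, however, that the paper offers no proof of this lemma at all: it is stated with a citation to Triebel's monograph and used as a black box, so there is nothing in the text to compare against step by step. Your self-contained route --- reduce to $s\in(0,1)$, pass to near-optimal extensions (which, given the paper's definition of $H^s_q([-R,R])$ by restriction with the infimum norm \eqref{eq:BesselNorm}, you get for free without even invoking a bounded linear extension operator), cut off to a fixed compact support, and then verify the Kolmogorov--Riesz criterion via the translation estimate $\|f(\cdot+h)-f\|_{L^q}\leqslant C|h|^{s}\|f\|_{H^s_q(\R)}$ --- is sound, and your Littlewood--Paley proof of that estimate is correct, including the observation that a direct Mikhlin--H\"ormander bound on the symbol $(e^{ih\xi}-1)(1+|\xi|^2)^{-s/2}$ fails (its first derivative weighted by $|\xi|$ grows like $|h||\xi|^{1-s}$). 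One remark: the dyadic decomposition can be bypassed entirely by interpolation --- the translation-difference operator has norm at most $2$ on $L^q(\R)$ and at most $|h|$ from $W^{1,q}(\R)$ to $L^q(\R)$, and since $H^s_q=[L^q,W^{1,q}]_s$ by complex interpolation, this yields the bound $2^{1-s}|h|^{s}$ directly; this is slightly shorter but relies on the identification of the complex interpolation space, which is of comparable depth to the embedding $F^s_{q,2}\hookrightarrow B^s_{q,\infty}$ you use. Either way, what your approach buys is an elementary, citation-free proof of a fact the paper simply imports.
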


\begin{corollary}
    \label{thm:Compact}
    Assume that 
    $\lbrace v^\varepsilon \rbrace_{\varepsilon>0}\subset \Li$ is bounded and 
    $\lbrace \Lambda^{1-\alpha} v^\varepsilon \rbrace_{\varepsilon>0}$ is bounded in $\Lp{q}$ for some $q\in (1,\infty)$ and $\alpha\in (0,1)$. There exist a subsequence 
    $\lbrace v^{\varepsilon_k} \rbrace_{k\in \NN}$ and $v\in \Li$ such that $v^{\varepsilon_k} \to v$ strongly in $L^q\big([-R,R]\big)$ for each $R>0$.
\end{corollary}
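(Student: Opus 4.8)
The plan is to localise the problem with a smooth cut-off, convert the two uniform bounds into a uniform bound in a fractional Sobolev space on a bounded interval, apply the compact embedding of Lemma~\ref{thm:CompactEmbedding}, and finish by a diagonal extraction over an exhausting sequence of intervals.

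Concretely, I would fix $R>0$ and choose $\chi\in C_c^\infty(\R)$ with $0\leqslant\chi\leqslant1$, $\chi\equiv1$ on $[-R,R]$ and ${\rm supp}\,\chi\subset[-2R,2R]$. Since $\{v^\varepsilon\}$ is bounded in $\Li$, each $\chi v^\varepsilon$ is supported in $[-2R,2R]$ and satisfies $\|\chi v^\varepsilon\|_q\leqslant C(R)$ uniformly in $\varepsilon$. To apply Lemma~\ref{thm:vfracestim} to $\chi v^\varepsilon$ I must check that $\Lambda^{1-\alpha}(\chi v^\varepsilon)$ is bounded in $\Lp{q}$; writing
\[
\Lambda^{1-\alpha}(\chi v^\varepsilon)=\chi\,\Lambda^{1-\alpha}v^\varepsilon+\bigl[\Lambda^{1-\alpha},\chi\bigr]v^\varepsilon,
\]
the first term is controlled by the hypothesis on $\{\Lambda^{1-\alpha}v^\varepsilon\}$ and $\|\chi\|_\infty=1$. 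For the commutator I would use, since $1-\alpha\in(0,1)$, the representation
\[
\bigl[\Lambda^{1-\alpha},\chi\bigr]v^\varepsilon(x)=c_\alpha\int_\R\frac{\bigl(\chi(x)-\chi(y)\bigr)v^\varepsilon(y)}{|x-y|^{2-\alpha}}\,\dy,
\]
valid in the sense of tempered distributions, whose integral converges absolutely because $|\chi(x)-\chi(y)|\leqslant\|\chi'\|_\infty|x-y|$ cancels the diagonal singularity. Splitting into $|x-y|\leqslant1$ and $|x-y|>1$, observing that $\chi(x)-\chi(y)$ vanishes unless $x$ or $y$ lies in $[-2R,2R]$, and using $\|v^\varepsilon\|_\infty\leqslant C$ together with $(2-\alpha)q>1$, one checks that $[\Lambda^{1-\alpha},\chi]v^\varepsilon$ is bounded on bounded sets and decays like $|x|^{-(2-\alpha)}$ as $|x|\to\infty$, hence is bounded in $\Lp{q}$ uniformly in $\varepsilon$. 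This localisation step — forced by the fact that Lemma~\ref{thm:vfracestim} requires the function itself to be globally integrable, whereas the $v^\varepsilon$ are merely in $\Li$ — is the main (and essentially the only) point requiring care.

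Once $\{\chi v^\varepsilon\}$ and $\{\Lambda^{1-\alpha}(\chi v^\varepsilon)\}$ are known to be bounded in $\Lp{q}$, Lemma~\ref{thm:vfracestim} gives that $\{\chi v^\varepsilon\}$ is bounded in $H^{1-\alpha}_q\bigl([-R,R]\bigr)$, and since $1-\alpha>0$ the compact embedding of Lemma~\ref{thm:CompactEmbedding} yields a subsequence converging in $L^q\bigl([-R,R]\bigr)$; as $\chi\equiv1$ there, it is a subsequence of $\{v^\varepsilon\}$. Applying this with $R=1,2,3,\dots$ and extracting a diagonal subsequence $\{v^{\varepsilon_k}\}$ produces convergence in $L^q\bigl([-m,m]\bigr)$ for every $m\in\NN$, hence in $L^q\bigl([-R,R]\bigr)$ for every $R>0$; denote by $v$ the resulting limit, which is well defined a.e.\ on $\R$ by consistency of the limits on nested intervals. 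Finally, the uniform bound $\|v^{\varepsilon_k}\|_\infty\leqslant C$ passes to the limit (by weak-$*$ lower semicontinuity of the $\Li$-norm, or after extracting a further a.e.\ convergent subsequence), so that $v\in\Li$, which completes the proof.
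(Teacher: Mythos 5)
Your proof is correct and follows essentially the same route as the paper's: localise with a smooth cut-off, apply Lemma \ref{thm:vfracestim} and the compact embedding of Lemma \ref{thm:CompactEmbedding}, then diagonalise over an exhausting sequence of intervals. The only difference is that you explicitly verify the uniform $L^q(\R)$ bound on $\Lambda^{1-\alpha}(\chi v^\varepsilon)$ through the commutator $[\Lambda^{1-\alpha},\chi]$ --- a step the paper's proof leaves implicit when it asserts boundedness of $\{\varphi_R v^\varepsilon\}$ in $H^{1-\alpha}_q([-R,R])$ --- and your kernel estimate for that commutator (Lipschitz cancellation near the diagonal, $|x|^{-(2-\alpha)}$ decay at infinity with $(2-\alpha)q>1$) is sound.
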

\begin{proof} 
 Let $\varphi \in C_c^\infty$ satisfy $0\leqslant \varphi \leqslant 1$,
    $\varphi(x) = 1$ {for} $x\in [-1,1]$ {and} $\text{supp} \,\varphi \subseteq [-2, 2]$. Define $\varphi_R(x)=\varphi(x/R)$.
    By Lemma \ref{thm:vfracestim}, the family  $\lbrace \varphi_R v^\varepsilon \rbrace_{\varepsilon>0} \subset H^{1-\alpha}_q([-R,R])$ is bounded for each $R>0$. By the compactness from Lemma~\ref{thm:CompactEmbedding}, there exist a subsequence $\left\lbrace \varepsilon^k_R\right\rbrace_{k\geqslant 0}$ (depending on $R>0$) and a function $v_R \in L^q([-R,R])$ such that $v^{\varepsilon_R^k} \to v_R$ strongly in $L^q([-R,R])$. 
    Now, the usual diagonal argument gives a subsequence converging in $L^q([-R,R])$ for each $R>0$.
\end{proof}

Finally, we recall two well-known inequalities involving the fractional Laplacian. 
\begin{lemma}[Stroock-Varopoulos inequality,  {\cite[Thm. 2.1]{liskevich11some}}]
    \label{thm:FracStrooqVarop}
    Assume $\alpha \in(0,2)$ and $p\in[1,\infty)$. The inequality
\begin{equation*}
        \int v^p \Lambda^\alpha v \dx \geqslant 4\frac{p}{(p+1)^2} \int_\R \left( \Lambda^\frac{\alpha}{2} \left( v^\frac{p+1}{2}\right) \right)^2 \dx 
   \end{equation*}
    holds true for each sufficiently regular $v=v(x)\geqslant 0$.
\end{lemma}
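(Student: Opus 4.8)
The plan is to reduce the inequality to an elementary pointwise estimate between the Gagliardo-type integrands of the two sides. For a sufficiently regular nonnegative $v$, I would first rewrite the left-hand side using the singular-integral formula \eqref{eq:frac} and symmetrizing the double integral in $x$ and $y$:
\[
\int_\R v^p \Lambda^\alpha v \dx = \frac12 \int_\R\int_\R \frac{\big(v(x)^p - v(y)^p\big)\big(v(x)-v(y)\big)}{|x-y|^{1+\alpha}} \dx\dy .
\]
Applying the same symmetrization with exponent $1$ to the function $w=v^{(p+1)/2}$, together with the identity $\int_\R (\Lambda^{\alpha/2} w)^2\dx = \int_\R w\,\Lambda^\alpha w\dx$ (which follows from Plancherel and $\widehat{\Lambda^{\alpha/2}w}=|\xi|^{\alpha/2}\widehat w$), I would obtain
\[
\int_\R \Big(\Lambda^{\alpha/2}\big(v^{(p+1)/2}\big)\Big)^2 \dx = \frac12\int_\R\int_\R \frac{\big(v(x)^{(p+1)/2} - v(y)^{(p+1)/2}\big)^2}{|x-y|^{1+\alpha}} \dx\dy .
\]
Comparing integrands, it then suffices to establish the pointwise inequality
\[
(a^p - b^p)(a-b) \;\geqslant\; \frac{4p}{(p+1)^2}\,\big(a^{(p+1)/2} - b^{(p+1)/2}\big)^2 \qquad\text{for all } a,b\geqslant 0 .
\]

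To prove this pointwise bound, assume first $a>b\geqslant 0$ and use the representations $a^p-b^p = p\int_b^a s^{p-1}\ds$ and $a^{(p+1)/2}-b^{(p+1)/2} = \tfrac{p+1}{2}\int_b^a s^{(p-1)/2}\ds$. The Cauchy--Schwarz inequality yields $\big(\int_b^a s^{(p-1)/2}\ds\big)^2 \leqslant (a-b)\int_b^a s^{p-1}\ds$, and multiplying by $(p+1)^2/4$ gives precisely the claimed inequality; the case $a=b$ is trivial and the case $b>a$ follows by symmetry of both sides. Integrating this pointwise estimate against the positive kernel $|x-y|^{-1-\alpha}$ over $\R\times\R$ then completes the argument.

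The only genuinely delicate point is justifying the two bilinear identities for the class of ``sufficiently regular'' nonnegative $v$ in question: one must check absolute convergence of the Gagliardo double integrals, the legitimacy of the symmetrization, and the Plancherel step $\int(\Lambda^{\alpha/2}w)^2 = \int w\,\Lambda^\alpha w$. For $v\in\mathcal S(\R)$ --- or, more to the point here, for the smooth regularized solutions with integrable tails appearing in the approximation scheme --- this is classical, and the inequality for the relevant limiting class then follows by a routine density argument; I would present this justification only in sketch form, since the heart of the matter is the one-line Cauchy--Schwarz computation above. Alternatively, one may simply invoke \cite[Thm.~2.1]{liskevich11some} directly.
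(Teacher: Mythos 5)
Your argument is correct, but note that the paper does not prove this lemma at all: it is stated as a recalled result and justified solely by the citation to \cite[Thm.~2.1]{liskevich11some}, so there is no internal proof to compare against. What you supply is the standard self-contained derivation (symmetrization of the Gagliardo double integral, the pointwise bound $(a^p-b^p)(a-b)\geqslant \tfrac{4p}{(p+1)^2}\bigl(a^{(p+1)/2}-b^{(p+1)/2}\bigr)^2$ via Cauchy--Schwarz on the integral representations, and Plancherel for the half-order form), and every step checks out, including the endpoint $p=1$ where the pointwise inequality degenerates to an identity. The one point you should flag more carefully is a normalization issue created by the paper's own convention: with $\Lambda^\alpha$ defined by the unnormalized singular integral \eqref{eq:frac}, the quadratic-form identity $\int w\,\Lambda^\alpha w = \tfrac12\iint |x-y|^{-1-\alpha}\bigl(w(x)-w(y)\bigr)^2$ is exact, but $\Lambda^{\alpha/2}\circ\Lambda^{\alpha/2}$ does not equal $\Lambda^\alpha$ (the symbols are $c_{\alpha/2}^2|\xi|^\alpha$ versus $c_\alpha|\xi|^\alpha$), so the Plancherel step $\int(\Lambda^{\alpha/2}w)^2 = \int w\,\Lambda^\alpha w$ holds only for the Fourier-normalized operator with symbol $|\xi|^{s}$; your proof is exact in that convention (which is the one Liskevich--Semenov use, and the one the paper implicitly adopts elsewhere, e.g.\ in \eqref{PLH}), and otherwise the inequality survives with an adjusted harmless constant. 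Your approach buys a transparent, elementary proof valid for all $\alpha\in(0,2)$ and $p\in[1,\infty)$; the paper's citation buys brevity.
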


\begin{lemma}[Gagiliardo--Nirenberg type inequality,  {\cite[Lemma 3.2]{BIK15}}] 
    \label{thm:FracGagiNiren}
    Fix $r>2$ and $q>1$ such that $q<r<2q$. The inequality
    \begin{nalign}\nonumber
        \|v\|_q^{\theta_1} \leqslant C \left\| \Lambda^\frac{\alpha}{2} |v|^\frac{r}{2} \right\|_2^2 \|v\|_1^{\theta_2}
    \end{nalign}
    holds true with
    \begin{nalign}\nonumber
        \theta_1 = \frac{q}{q-1} (r-1 + \alpha) \quad \text{and} \quad  \theta_2 = \theta_1 - r = \frac{r + q(\alpha - 1)}{q-1}.
    \end{nalign}
\end{lemma}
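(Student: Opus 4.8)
The plan is to obtain the inequality by composing two classical estimates --- the fractional Sobolev embedding and the interpolation (Lyapunov) inequality for Lebesgue norms --- and then matching the resulting exponents with $\theta_1$ and $\theta_2$. First I would set $w=|v|^{r/2}$, so that $\big\|\Lambda^{\alpha/2}|v|^{r/2}\big\|_2^2=\|\Lambda^{\alpha/2}w\|_2^2$. Since $\alpha\in(0,1)$, the order $\alpha/2$ lies in $(0,1/2)$, which is precisely the subcritical range on the line where the homogeneous Sobolev embedding $\dot H^{\alpha/2}(\R)\hookrightarrow L^{2/(1-\alpha)}(\R)$ holds; I would justify it exactly as in the proofs of Lemmas~\ref{lem:UEstim} and~\ref{thm:uFracEst}, by writing $w=\Lambda^{-\alpha/2}\big(\Lambda^{\alpha/2}w\big)$, recognising $\Lambda^{-\alpha/2}$ as the Riesz potential of formula~\eqref{Riesz}, and invoking the Hardy--Littlewood--Sobolev inequality \cite[Ch.~V, Thm.~1]{stein1970singular}. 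Undoing the substitution turns $\|w\|_{2/(1-\alpha)}^{2}\leqslant C\|\Lambda^{\alpha/2}w\|_2^2$ into
\[
\|v\|_{r/(1-\alpha)}^{r}\leqslant C\,\big\|\Lambda^{\alpha/2}|v|^{r/2}\big\|_2^2;
\]
for $v$ not regular enough for these manipulations, I would first prove this for $v\in\mathcal S(\R)$ and then pass to general $v$ by density together with Fatou's lemma (the assertion being vacuous when the right-hand side is infinite).

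Next I would interpolate. As $q>1$ and $q<r<r/(1-\alpha)$, the exponent $q$ lies strictly between $1$ and $r/(1-\alpha)$, so the interpolation inequality for Lebesgue norms gives
\[
\|v\|_q\leqslant\|v\|_1^{\,1-\lambda}\,\|v\|_{r/(1-\alpha)}^{\,\lambda},\qquad \frac1q=(1-\lambda)+\lambda\,\frac{1-\alpha}{r},
\]
that is, $\lambda=\dfrac{r(q-1)}{q(r-1+\alpha)}$; one checks that $\lambda\in(0,1)$, positivity being immediate and $\lambda<1$ being equivalent to $q(1-\alpha)<r$, which holds because $r>q>q(1-\alpha)$.

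Raising the interpolation inequality to the power $r/\lambda$ and inserting the Sobolev bound for $\|v\|_{r/(1-\alpha)}^{r}$ gives
\[
\|v\|_q^{r/\lambda}\leqslant C\,\big\|\Lambda^{\alpha/2}|v|^{r/2}\big\|_2^2\,\|v\|_1^{r(1-\lambda)/\lambda},
\]
and it only remains to verify, from the explicit value of $\lambda$, that $r/\lambda=\dfrac{q(r-1+\alpha)}{q-1}=\theta_1$ and $r(1-\lambda)/\lambda=\dfrac{r+q(\alpha-1)}{q-1}=\theta_2$ (in particular $\theta_2=\theta_1-r>0$, again because $q(1-\alpha)<r$), which is the claimed estimate. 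The hypotheses $q<r<2q$ serve only to keep these exponents in their admissible ranges.

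I do not expect a genuine obstacle here, as the argument is a routine chaining of standard inequalities; the only points needing care are the precise bookkeeping of the exponents, the verification that $\lambda\in(0,1)$ and $\theta_2>0$ so that the interpolation step and the final rearrangement are legitimate, and the approximation argument reducing everything to smooth $v$.
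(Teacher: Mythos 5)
Your argument is correct, and the exponent bookkeeping checks out: with $\lambda=\frac{r(q-1)}{q(r-1+\alpha)}$ one indeed gets $r/\lambda=\theta_1$ and $r(1-\lambda)/\lambda=\theta_1-r=\theta_2$, and the admissibility conditions $1<q<r/(1-\alpha)$ and $\lambda\in(0,1)$ follow from $q<r$ and $\alpha\in(0,1)$. Note, however, that the paper does not prove this lemma at all --- it is imported verbatim as \cite[Lemma~3.2]{BIK15} --- so there is no in-text proof to compare against; your self-contained derivation (fractional Sobolev embedding $\dot H^{\alpha/2}(\R)\hookrightarrow L^{2/(1-\alpha)}(\R)$ applied to $w=|v|^{r/2}$, followed by Lebesgue interpolation between $L^1$ and $L^{r/(1-\alpha)}$) is the standard route and is essentially how the cited result is obtained in one dimension. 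Two small remarks. First, the hypothesis $r<2q$ is never used in your proof (only $q(1-\alpha)<r$ is, and that already follows from $q<r$); it is carried along from the multidimensional statement in \cite{BIK15}, where the analogous Sobolev exponent changes and additional restrictions become relevant. Second, your Sobolev step genuinely requires $\alpha<1$ in dimension one (you need $\alpha/2<1/2$ for the Hardy--Littlewood--Sobolev embedding with $p=2$); this is consistent with the standing assumption \eqref{eq:varphi}, but Theorems \ref{thm:GlobExist} and \ref{thm:GlobalExistEps} are nominally stated for $\alpha\in(0,1]$, so if one wanted the lemma at the endpoint $\alpha=1$ a different interpolation (e.g.\ a Nash-type inequality avoiding the critical embedding) would be needed. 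Neither point affects the validity of your proof in the range actually used.
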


\section{Regularised system}
\subsection{Existence of solutions}
A solution to both  Cauchy problems \eqref{eq:URhoFractional}--\eqref{eq:iniConv} and \eqref{eq:URhoG}--\eqref{eq:ini3eq} is obtained as a limit when $\varepsilon\searrow 0$ of solutions to the regularized system 
\begin{nalign}
    \label{eq:URhoGEps}
    \begin{aligned}
    \rho_t^\varepsilon +(\rho^\varepsilon u^\varepsilon)_x &= \varepsilon \rho^\varepsilon_{xx},   \\ 
    G_t^\varepsilon + (G^\varepsilon u^\varepsilon)_x  &= \varepsilon G^\varepsilon_{xx},   \\
      \partial_x^{-1}( G^\varepsilon+ \Lambda^\alpha\rho^\varepsilon) &= u^\varepsilon 
     \end{aligned}
     \qquad x\in \R, \quad t>0,
\end{nalign}
with~$\varepsilon>0$ and   with the initial conditions 
\begin{nalign}
    \label{eq:iniReg}
    \rho^\varepsilon(x,0) = \rho_0(x) \quad  \text{and}\quad  G^\varepsilon(x,0) = G_0(x).
\end{nalign}

The following  two theorems summarize properties of solutions to this problem.
\begin{theorem}
    \label{thm:GlobalExistEps}
    Let $\alpha \in (0,1]$. For each $\varepsilon>0$ and each $\rho_0, G_0\in \Li\cap \Lp{1}$ satisfying 
     \begin{nalign}
        0 \leqslant G_0(x) \leqslant a \rho_0(x) \quad  \text{for almost  all}\quad   x\in\R \quad \text{and some fixed }  a>0
    \end{nalign}
     there exist a unique  global-in-time mild solution 
     $$
     \rho^\varepsilon, G^\varepsilon \in C\big([0,\infty), L^p(\R)\big)\quad \text{for each} \quad p\in [1,\infty )
     $$ 
     and 
     $u^\varepsilon \in  L^\infty\big(\R\times [0, \infty)\big)$
     to problem~\eqref{eq:URhoGEps}-\eqref{eq:iniReg}.  
This solution has the following  parabolic regularity for each $T>0$
   \begin{equation}\label{rhoG:reg:0}
        \rho^\varepsilon, G^\varepsilon  \in  C\big((0,T],W^{2,p} (\R)\big)   \cap  C^1\big((0,T],L^p (\R)\big)
   \end{equation}
   for each $p\in (1,\infty)$.

    Moreover, it 
    has the following properties for all $t>0${\rm :}
    \begin{itemize}
        \item the mass conservation
        \begin{nalign}\label{Ms}
            M_\rho \equiv \int_\R{\rho_0} (x) \dx = \int_\R{\rho^\varepsilon(x,t)}\dx \quad \text{and} \quad M_G \equiv  \int_\R{G_0(x)}\dx = \int_\R{G^\varepsilon(x,t)}\dx;
        \end{nalign}
        \item the estimate  \label{thm:UEst}
        \begin{nalign}\label{u:infty}
            \|u^\varepsilon( t)\|_\infty \leqslant \|u_0\|_\infty;
        \end{nalign}
        \item the comparison principle 
        \begin{nalign}\label{Pz} 
             0\leqslant G^\varepsilon(x,t) \leqslant a \rho^\varepsilon(x,t) \quad   \text{for all} \quad x\in\R;
        \end{nalign}
        \item  the $L^p$-estimates
        \begin{nalign}\label{Pe}
            \frac{1}{a}\|G^\varepsilon(\cdot, t)\|_p \leqslant\|\rho^\varepsilon(\cdot, t)\|_p \leqslant \|\rho_0\|_p \quad \text{for each} \quad p\in[1,\infty]
        \end{nalign}
        and 
        \begin{nalign} \label{Pe2}
            \frac{1}{a}\|G^\varepsilon(\cdot, t)\|_p \leqslant\|\rho^\varepsilon(\cdot, t)\|_p \leqslant C(p,\alpha) M_\rho^\frac{2+p\alpha}{2p + 2\alpha}  t^{\left(-1+\frac{1}{p}\right)\frac{1}{2+\alpha}} \quad \text{for each} \quad p\in(2,\infty),   
        \end{nalign}
        with the numbers $C(p,\alpha)>0$.
    \end{itemize}
\end{theorem}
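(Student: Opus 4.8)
The plan is to obtain $(\rho^\varepsilon,G^\varepsilon,u^\varepsilon)$ by Duhamel's formula and a contraction mapping for local existence and uniqueness, and then to remove the time restriction using a priori bounds. Writing $S_\varepsilon(t)=e^{\varepsilon t\partial_x^2}$ for the heat semigroup and eliminating the velocity through the third equation, $u^\varepsilon=\partial_x^{-1}\big(G^\varepsilon+\Lambda^\alpha\rho^\varepsilon\big)$, problem \eqref{eq:URhoGEps}--\eqref{eq:iniReg} becomes the fixed-point system
\begin{align*}
\rho^\varepsilon(t)&=S_\varepsilon(t)\rho_0-\int_0^t S_\varepsilon(t-s)\,\partial_x\big(\rho^\varepsilon u^\varepsilon\big)(s)\ds,\\
G^\varepsilon(t)&=S_\varepsilon(t)G_0-\int_0^t S_\varepsilon(t-s)\,\partial_x\big(G^\varepsilon u^\varepsilon\big)(s)\ds,
\end{align*}
which I would solve for the pair $(\rho^\varepsilon,G^\varepsilon)$ in a small ball of $L^\infty\big([0,T];L^1(\R)\cap L^\infty(\R)\big)$. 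The contraction rests on two facts: the pointwise-in-time bound $\|u^\varepsilon\|_\infty\leqslant\|\partial_x^{-1}G^\varepsilon\|_\infty+\|\partial_x^{-1}\Lambda^\alpha\rho^\varepsilon\|_\infty\leqslant\|G^\varepsilon\|_1+C\big(\|\rho^\varepsilon\|_1+\|\rho^\varepsilon\|_\infty\big)$, which is inequality \eqref{eq:est2} of Lemma \ref{lem:UEstim} (here $\alpha\in(0,1)$), and the smoothing estimate $\|S_\varepsilon(t-s)\partial_x f\|_r\leqslant C(\varepsilon(t-s))^{-1/2}\|f\|_r$, whose $(t-s)^{-1/2}$-singularity is integrable. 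Together with the elementary bilinear and Lipschitz bounds $\|\rho u\|_r\leqslant\|u\|_\infty\|\rho\|_r$ (and the linear dependence of $u^\varepsilon$ on $(\rho^\varepsilon,G^\varepsilon)$), these make the right-hand side a contraction for $T=T(\varepsilon,\|\rho_0\|_{L^1\cap L^\infty},\|G_0\|_{L^1\cap L^\infty})$ small; this yields the unique local mild solution, and the claimed continuity $\rho^\varepsilon,G^\varepsilon\in C\big([0,\infty);L^p(\R)\big)$, $p<\infty$, is recovered a posteriori from the smoothing in the next step, mass conservation, and Scheff\'e's lemma.

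Since $u^\varepsilon\in L^\infty_{\mathrm{loc}}$ is now available, the first two equations of \eqref{eq:URhoGEps} are linear drift--diffusion equations with bounded drift, and a standard parabolic bootstrap — differentiating the Duhamel formula and using $\|S_\varepsilon(t)\partial_x^{\,k} f\|_p\leqslant C(\varepsilon t)^{-k/2}\|f\|_p$ together with the $L^p$-bounds already in hand — upgrades the solution to \eqref{rhoG:reg:0} for every $p\in(1,\infty)$. In particular, for $t>0$ the fractional term $\Lambda^\alpha\rho^\varepsilon$, hence $u^\varepsilon_x=G^\varepsilon+\Lambda^\alpha\rho^\varepsilon$, is continuous and bounded, so $(\rho^\varepsilon,G^\varepsilon,u^\varepsilon)$ is a classical solution for positive times; this is what licences the maximum-principle arguments below.

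The listed properties then follow. Mass conservation \eqref{Ms} comes from integrating the first two equations of \eqref{eq:URhoGEps} over $\R$ and using the decay of the classical solution. For the comparison principle \eqref{Pz}, note that $\rho^\varepsilon$, $G^\varepsilon$ and $w:=a\rho^\varepsilon-G^\varepsilon$ all satisfy the \emph{same} linear equation $w_t+(wu^\varepsilon)_x=\varepsilon w_{xx}$ with nonnegative initial data; for $t\geqslant\tau>0$ the coefficients are continuous and bounded, so the parabolic maximum principle gives $\rho^\varepsilon,G^\varepsilon\geqslant0$ and $a\rho^\varepsilon-G^\varepsilon\geqslant0$, and $\tau\to0$ yields \eqref{Pz}. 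For \eqref{u:infty}, apply $\partial_x^{-1}$ to $\partial_t(G^\varepsilon+\Lambda^\alpha\rho^\varepsilon)$ and use the first two equations of \eqref{eq:URhoGEps} to get that $u^\varepsilon$ solves $u^\varepsilon_t-\varepsilon u^\varepsilon_{xx}=-u^\varepsilon G^\varepsilon-\Lambda^\alpha(\rho^\varepsilon u^\varepsilon)$ (equivalently $u^\varepsilon_t+u^\varepsilon u^\varepsilon_x-\varepsilon u^\varepsilon_{xx}=u^\varepsilon\Lambda^\alpha\rho^\varepsilon-\Lambda^\alpha(\rho^\varepsilon u^\varepsilon)$, exactly as \eqref{eq:URhoFractional} was derived); at a spatial maximum $x_0$ of $u^\varepsilon(\cdot,t)$ one has $u^\varepsilon_x(x_0)=0$, $u^\varepsilon_{xx}(x_0)\leqslant0$, and, since $\rho^\varepsilon\geqslant0$, the pointwise inequality $-\Lambda^\alpha(\rho^\varepsilon u^\varepsilon)(x_0)\leqslant-u^\varepsilon(x_0)\Lambda^\alpha\rho^\varepsilon(x_0)$, whence $\partial_t u^\varepsilon(x_0,t)\leqslant-u^\varepsilon(x_0)\big(G^\varepsilon(x_0)+\Lambda^\alpha\rho^\varepsilon(x_0)\big)=0$; together with the symmetric computation at a minimum (and the standard device for extrema approached only as $|x|\to\infty$, since $u_0$ need not decay), this gives $\|u^\varepsilon(t)\|_\infty\leqslant\|u_0\|_\infty$. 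Finally, testing the $\rho^\varepsilon$-equation against $p(\rho^\varepsilon)^{p-1}$, integrating by parts, and using $u^\varepsilon_x=G^\varepsilon+\Lambda^\alpha\rho^\varepsilon$ together with $G^\varepsilon,\rho^\varepsilon\geqslant0$ gives
\begin{equation*}
\frac{d}{dt}\|\rho^\varepsilon(\cdot,t)\|_p^p\leqslant-(p-1)\int_\R(\rho^\varepsilon)^p\,\Lambda^\alpha\rho^\varepsilon\dx\leqslant-c_p\,\big\|\Lambda^{\alpha/2}(\rho^\varepsilon)^{(p+1)/2}\big\|_2^2\leqslant0,
\end{equation*}
the middle step being the Stroock--Varopoulos inequality (Lemma \ref{thm:FracStrooqVarop}); this yields $\|\rho^\varepsilon(t)\|_p\leqslant\|\rho_0\|_p$, hence \eqref{Pe} after using $G^\varepsilon\leqslant a\rho^\varepsilon$. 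Inserting the Gagliardo--Nirenberg-type inequality of Lemma \ref{thm:FracGagiNiren} (with $r=p+1$) into the right-hand side turns the energy inequality into a closed differential inequality $y'\leqslant-CM_\rho^{-\theta_2}y^{1+\delta}$ for $y(t)=\|\rho^\varepsilon(t)\|_p^p$ with some $\delta>0$, whose integration gives the algebraic-in-$t$ decay \eqref{Pe2} (again transferred to $G^\varepsilon$ via \eqref{Pz}). Globality is then immediate: \eqref{Pe}, \eqref{Pz} and \eqref{eq:est2} keep $\|\rho^\varepsilon(t)\|_{L^1\cap L^\infty}$, $\|G^\varepsilon(t)\|_{L^1\cap L^\infty}$ and $\|u^\varepsilon(t)\|_\infty$ bounded on every finite interval, so the local solution cannot blow up in finite time.

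I expect the most delicate points to be, first, the rigorous justification of the two maximum-principle arguments — \eqref{Pz} and especially \eqref{u:infty} — given only the regularity \eqref{rhoG:reg:0}: one must make sense of the nonlocal coefficient $\Lambda^\alpha\rho^\varepsilon$ pointwise (fine for $t>0$, since $\rho^\varepsilon\in C^{1,\gamma}\cap L^1\cap L^\infty$ there) and control extrema of $u^\varepsilon$ that may be approached only at spatial infinity; and second, checking that the Stroock--Varopoulos/Gagliardo--Nirenberg chain closes with an exponent $\delta>0$ \emph{uniform in} $\varepsilon$, since this is precisely what will allow the decay estimate \eqref{Pe2} to survive the limit $\varepsilon\searrow0$ later in the paper.
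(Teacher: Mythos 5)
Your proposal is correct and follows essentially the same route as the paper: a Duhamel/contraction argument in $L^\infty([0,T];L^1\cap L^p)$ using the key bound \eqref{eq:est2} on $u^\varepsilon$, parabolic bootstrap for \eqref{rhoG:reg:0}, the linear equation for $w=a\rho^\varepsilon-G^\varepsilon$ for \eqref{Pz}, a maximum-principle argument on the derived equation $u^\varepsilon_t+u^\varepsilon G^\varepsilon+\Lambda^\alpha(\rho^\varepsilon u^\varepsilon)=\varepsilon u^\varepsilon_{xx}$ for \eqref{u:infty} (the paper handles suprema attained only at infinity via the sequence argument of Droniou--Imbert, exactly the ``standard device'' you flag), and the $L^p$-energy identity combined with Stroock--Varopoulos and the fractional Gagliardo--Nirenberg inequality for \eqref{Pe}--\eqref{Pe2}, with globality from the resulting a priori bounds. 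The only cosmetic difference is that at a spatial maximum you close the sign argument via $G^\varepsilon(x_0)+\Lambda^\alpha\rho^\varepsilon(x_0)=u^\varepsilon_x(x_0)=0$, whereas the paper rewrites the nonlocal term as $\int\frac{u^\varepsilon(y)-u^\varepsilon(x)}{|x-y|^{1+\alpha}}\rho^\varepsilon(y)\,\text{d}y\leqslant 0$; these are equivalent.
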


\begin{theorem}
    \label{thm:ExistEstimatesEpsAB}
    Under the assumptions of Theorem \ref{thm:GlobalExistEps}, 
    suppose  moreover, that the initial conditions satisfy
        \begin{nalign} 
        0\leqslant b \rho_0(x) \leqslant G_0(x) \leqslant a \rho_0(x) \quad \text{for almost all} \quad x\in \R
    \end{nalign}
    and for given constants $a>0$ and $b>0$. 
    Then, the solution $(u^\varepsilon,\rho^\varepsilon, G^\varepsilon)$ of the regularized problem \eqref{eq:URhoGEps}-\eqref{eq:iniReg}, constructed in 
     Theorem \ref{thm:GlobalExistEps}, satisfies  
    \begin{itemize}
        \item the inequality
        \begin{nalign}\label{ab:est}
           0\leqslant b \rho^\varepsilon(x,t) \leqslant G^\varepsilon(x,t) \leqslant a \rho^\varepsilon(x,t) \quad \text{for almost  all} \quad x\in \R,
           \quad t\geq 0;
        \end{nalign}
            \item the estimates 
    \begin{nalign} \label{est:rho:G:opt}
        \left\|\rho^\varepsilon(\cdot, t)\right\|_p\leqslant C(p) M_\rho^\frac{1}{p}{t^{-1+\frac{1}{p}}} 
        \quad \text{and} \quad  
        \|G^\varepsilon(\cdot, t)\|_p\leqslant C(p)M_\rho^\frac{1}{p}{t^{-1+\frac{1}{p}}} 
    \end{nalign}  
       for all $t>0$, each $p\in [1,\infty]$, and a constant $C(p)$ independent of~$t$.
    \end{itemize}
\end{theorem}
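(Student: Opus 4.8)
The key observation is that both $\rho^\varepsilon$ and $G^\varepsilon$ solve the \emph{same} linear advection-diffusion equation $w_t+(w u^\varepsilon)_x=\varepsilon w_{xx}$, differing only in their initial data. The plan is to exploit this directly. First, I would establish the comparison inequality \eqref{ab:est}. The functions $G^\varepsilon - b\rho^\varepsilon$ and $a\rho^\varepsilon - G^\varepsilon$ both satisfy the same linear equation (by linearity of $w\mapsto w_t+(wu^\varepsilon)_x - \varepsilon w_{xx}$), with initial data $G_0 - b\rho_0 \geqslant 0$ and $a\rho_0 - G_0 \geqslant 0$ respectively, both lying in $\Li\cap\Lp{1}$. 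Since $u^\varepsilon \in L^\infty$ and $\rho^\varepsilon, G^\varepsilon$ have the parabolic regularity \eqref{rhoG:reg:0}, the coefficient $u^\varepsilon$ and its relevant derivatives are under control, so the standard parabolic maximum principle (or a Duhamel/fixed-point representation as presumably already used to prove \eqref{Pz} in Theorem \ref{thm:GlobalExistEps}) applies and yields nonnegativity of both differences for all $t\geqslant 0$. Combined with the already-known lower bound $G^\varepsilon \geqslant 0$ and $\rho^\varepsilon \geqslant 0$, this gives \eqref{ab:est}. In fact, since \eqref{Pz} and the upper bound $G^\varepsilon \leqslant a\rho^\varepsilon$ are already in Theorem \ref{thm:GlobalExistEps}, the only new content here is the lower bound $b\rho^\varepsilon \leqslant G^\varepsilon$, which follows by the same argument applied to $G^\varepsilon - b\rho^\varepsilon$.

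Next, for the decay estimates \eqref{est:rho:G:opt}, the idea is that \eqref{ab:est} lets me \emph{close} a differential inequality for $\|\rho^\varepsilon(\cdot,t)\|_p$ that was previously only closable with the weaker exponent in \eqref{Pe2}. Testing the $\rho^\varepsilon$-equation against $(\rho^\varepsilon)^{p-1}$ (valid by the regularity \eqref{rhoG:reg:0}) and using $\partial_x^{-1}(G^\varepsilon + \Lambda^\alpha\rho^\varepsilon) = u^\varepsilon$, one integrates by parts: the transport term produces $-\tfrac{1}{p}\int (\rho^\varepsilon)^p u^\varepsilon_x = -\tfrac{1}{p}\int(\rho^\varepsilon)^p(G^\varepsilon + \Lambda^\alpha\rho^\varepsilon)$, the viscosity term is $\leqslant 0$ and by the Stroock--Varopoulos inequality (Lemma \ref{thm:FracStrooqVarop}) so is $-\int(\rho^\varepsilon)^{p}\Lambda^\alpha\rho^\varepsilon$ up to a negative multiple of $\|\Lambda^{\alpha/2}(\rho^\varepsilon)^{(p+1)/2}\|_2^2$. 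The crucial new term is $-\tfrac{1}{p}\int (\rho^\varepsilon)^{p} G^\varepsilon$: using the lower bound $G^\varepsilon \geqslant b\rho^\varepsilon$ from \eqref{ab:est}, this is $\leqslant -\tfrac{b}{p}\int (\rho^\varepsilon)^{p+1} = -\tfrac{b}{p}\|\rho^\varepsilon\|_{p+1}^{p+1}$, a genuinely dissipative, \emph{local} superlinear sink that mimics the dissipation in the classical porous medium / heat equation. So one obtains, schematically,
\begin{nalign}
\frac{d}{dt}\|\rho^\varepsilon(\cdot,t)\|_p^p \leqslant -\frac{b}{p}\|\rho^\varepsilon(\cdot,t)\|_{p+1}^{p+1}
\end{nalign}
(dropping the further nonnegative dissipation from viscosity and from Stroock--Varopoulos, and the $G$-term extra contribution which has the favorable sign since $(\rho^\varepsilon)^p\Lambda^\alpha\rho^\varepsilon$-type cross terms are handled by the same inequality).

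From this point the argument is the classical Nash/Bénilan-type iteration: interpolating $\|\rho^\varepsilon\|_{p+1}$ from below using $\|\rho^\varepsilon\|_p$ and the conserved mass $\|\rho^\varepsilon\|_1 = M_\rho$ (mass conservation \eqref{Ms}) via the interpolation $\|\rho^\varepsilon\|_p \leqslant \|\rho^\varepsilon\|_{p+1}^{\theta}\|\rho^\varepsilon\|_1^{1-\theta}$, one converts the differential inequality into $\tfrac{d}{dt}y \leqslant -c\, M_\rho^{-\sigma} y^{1+\kappa}$ for $y=\|\rho^\varepsilon\|_p^p$ with explicit $\kappa, \sigma>0$, whose solutions obey $y(t) \leqslant C M_\rho^{\sigma/\kappa} t^{-1/\kappa}$ regardless of initial size — and matching exponents gives precisely $\|\rho^\varepsilon(\cdot,t)\|_p \leqslant C(p) M_\rho^{1/p} t^{-1+1/p}$. (One can first prove it for a convenient $p$, say $p=2$ or $p\to\infty$, then propagate to all $p$ via interpolation between the $t^{-1+1/p}$ rates and the $L^1$-bound; the endpoint $p=1$ is just mass conservation and $p=\infty$ follows by letting $p\to\infty$ in the constants or by a separate $L^\infty$ estimate.) The estimate for $G^\varepsilon$ is then immediate from $G^\varepsilon \leqslant a\rho^\varepsilon$: $\|G^\varepsilon\|_p \leqslant a\|\rho^\varepsilon\|_p \leqslant C(p) M_\rho^{1/p} t^{-1+1/p}$.

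\textbf{Main obstacle.} The delicate point is justifying the testing-against-$(\rho^\varepsilon)^{p-1}$ computation rigorously and, more importantly, controlling the \emph{sign and size} of all the terms involving $\Lambda^\alpha\rho^\varepsilon$ and the interaction between the transport term and $G^\varepsilon$ simultaneously — i.e.\ making sure the $-\tfrac1p\int(\rho^\varepsilon)^p\Lambda^\alpha\rho^\varepsilon$ term really is dominated by (a constant times) the Stroock--Varopoulos dissipation so that it can be absorbed, while the new $-\tfrac bp\int(\rho^\varepsilon)^{p+1}$ term survives with the right coefficient. This is essentially a bookkeeping problem but must be done carefully because the whole improvement over \eqref{Pe2} rests on it; the Gagliardo--Nirenberg-type inequality (Lemma \ref{thm:FracGagiNiren}) is the tool for comparing the nonlocal dissipation with $L^p$ norms and mass, and one needs the hypothesis $\alpha\in(0,1)$ (and the relation between $r,q$) to make those exponents work out. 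A secondary, purely technical obstacle is that these $L^p$ bounds must be \emph{uniform in $\varepsilon$}, which they are since $b$, $a$, $M_\rho$ and all interpolation constants are $\varepsilon$-independent and the viscosity only helps; this uniformity is exactly what is needed later to pass to the limit $\varepsilon\searrow 0$ in Theorem \ref{thm:DecayEstiamtes}.
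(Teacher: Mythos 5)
Your proposal is correct and follows essentially the same route as the paper: the lower bound $b\rho^\varepsilon\leqslant G^\varepsilon$ comes from the comparison principle for the linear equation satisfied by $G^\varepsilon-b\rho^\varepsilon$, and the decay estimate comes from the $L^p$ identity \eqref{z:B3}, where the term $-(p-1)\int(\rho^\varepsilon)^pG^\varepsilon\,\mathrm{d}x\leqslant -b(p-1)\|\rho^\varepsilon\|_{p+1}^{p+1}$ is closed against the conserved mass by H\"older and the resulting ODE is integrated. The only cosmetic differences are your transport-term coefficient ($-(p-1)/p$ rather than $-1/p$ with your normalization, which only changes $C(p)$) and your worry about balancing $-\int(\rho^\varepsilon)^p\Lambda^\alpha\rho^\varepsilon$ against the Stroock--Varopoulos dissipation: that term is simply nonpositive and is dropped, so Lemma \ref{thm:FracGagiNiren} is not needed here at all.
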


We begin the proof of Theorems \ref{thm:GlobalExistEps} by the following lemma, where we  construct a local-in-time solution to system \eqref{eq:URhoGEps}-\eqref{eq:iniReg} via the  Banach fixed point argument.

\begin{lemma}
    \label{prop:ExistLocalEps}
    Let $\alpha \in (0,1]$. For each $\rho_0, G_0\in \Li\cap \Lp{1}$ and each $\varepsilon>0$, there exist $T>0$ and a unique mild solution 
     of the regularized problem \eqref{eq:URhoGEps}-\eqref{eq:iniReg}{\rm :}
    \begin{equation}\label{spaces:lem}
    \rho^\varepsilon, G^\varepsilon \in C\big([0,T], \Lp1 \cap L^p(\R)\big) \quad \text{for each $p\in (1,\infty)$}
    \end{equation}
    and $u^\varepsilon \in L^\infty \big(\R\times [0,T]\big)$, satisfying weak formulation     
    \begin{nalign} \label{weak:reg}
      -  \int_0^\infty \int_\R \rho^\varepsilon \varphi_t \dx \dt -  \int_\R \rho_0\varphi(\cdot, 0)\dx - \int_0^\infty \int_\R \rho^\varepsilon u^\varepsilon \varphi_x \dx \dt &= \varepsilon\int_0^\infty \int_\R \rho^\varepsilon \varphi_{xx}\dx\dt , \\
       - \int_0^\infty \int_\R G^\varepsilon \psi_t \dx \dt  -  \int_\R G_0\psi(\cdot, 0)\dx - \int_0^\infty \int_\R G^\varepsilon u^\varepsilon \psi_x \dx \dt &= \varepsilon\int_0^\infty \int_\R G^\varepsilon \psi_{xx}\dx\dt, \\
        - \int_0^\infty \int_\R u^\varepsilon \zeta_x \dx \dt - \int_0^\infty \int_\R \rho^\varepsilon \Lambda^\alpha \zeta \dx \dt & = \int_0^\infty \int_\R G^\varepsilon \zeta \dx \dt
    \end{nalign}
    for all $\varphi,\psi,\zeta \in C_c^\infty\big(\R \times [0,\infty)\big)$.
   Moreover the solution has the following parabolic regularity 
   \begin{equation}\label{rhoG:reg}
        \rho^\varepsilon, G^\varepsilon  \in  C\big((0,T],W^{2,p} (\R)\big)   \cap  C^1\big((0,T],L^p (\R)\big)
   \end{equation}
   for each $p\in (1,\infty)$.
\end{lemma}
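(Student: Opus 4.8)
The plan is to construct the local-in-time mild solution via a Banach fixed point argument applied to the first two (parabolic) equations of \eqref{eq:URhoGEps}, treating $u^\varepsilon$ as a functional of $\rho^\varepsilon, G^\varepsilon$ through the third equation. First I would set up the iteration scheme: for given $\rho, G$ in a suitable ball of the Banach space $X_T = C\big([0,T], L^1(\R)\cap L^p(\R)\big)$ (for a fixed $p\in(1,\infty)$, say $p$ large enough that $W^{1,p}\hookrightarrow L^\infty$ is not needed but Sobolev-type control is available), define $u = \partial_x^{-1}(G+\Lambda^\alpha \rho)$ and check via Lemma \ref{lem:UEstim} (inequality \eqref{eq:est2}) that $u\in L^\infty(\R)$ with $\|u\|_\infty \leqslant C(\|G\|_1+\|G\|_\infty + \|\rho\|_1+\|\rho\|_\infty)$; here I need the initial data to lie in $L^1\cap L^\infty$, which is assumed. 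Then I would define the map $\Phi(\rho,G) = (\tilde\rho,\tilde G)$ by solving the two linear drift-diffusion equations
\begin{nalign}
\tilde\rho_t - \varepsilon\tilde\rho_{xx} = -(\rho u)_x, \qquad \tilde G_t - \varepsilon\tilde G_{xx} = -(G u)_x,
\end{nalign}
via Duhamel's formula with the heat semigroup $e^{\varepsilon t\partial_{xx}}$, i.e.~$\tilde\rho(t) = e^{\varepsilon t\partial_{xx}}\rho_0 - \int_0^t \partial_x e^{\varepsilon(t-s)\partial_{xx}}(\rho(s)u(s))\ds$.

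The second step is the contraction estimate. The key smoothing bound is $\|\partial_x e^{\varepsilon t\partial_{xx}}f\|_r \leqslant C(\varepsilon t)^{-1/2 - (1/q-1/r)/1}\|f\|_q$ for $q\leqslant r$; applied with $q=r=p$ (and separately $q=r=1$) this gives a $t^{-1/2}$ singularity in the Duhamel integral, which is integrable, so for $T$ small the nonlinear map is a contraction on a small ball. One must check that the bilinear term $\rho u$ is controlled: $\|\rho u\|_p \leqslant \|\rho\|_p\|u\|_\infty$ and $\|\rho u\|_1\leqslant \|\rho\|_1\|u\|_\infty$, and $\|u\|_\infty$ is bounded by the $L^1\cap L^\infty$ norms of $\rho,G$ as above — this is why I carry both the $L^1$ and $L^p$ norms in the working space. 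Lipschitz dependence of $u$ on $(\rho,G)$ is linear, so the difference estimates are routine. This yields existence and uniqueness of the mild solution on $[0,T]$ with $\rho^\varepsilon, G^\varepsilon \in C\big([0,T], L^1\cap L^p(\R)\big)$; since the argument works for every $p\in(1,\infty)$ and the fixed point is independent of $p$ (by uniqueness in the overlap), one gets \eqref{spaces:lem}.

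The third step is the parabolic regularity \eqref{rhoG:reg}: once we know $\rho^\varepsilon u^\varepsilon \in C\big((0,T], L^p(\R)\big)$ (using that $u^\varepsilon\in L^\infty$ and $\rho^\varepsilon\in C([0,T],L^p)$), a bootstrap using parabolic maximal regularity / analytic-semigroup smoothing for $e^{\varepsilon t\partial_{xx}}$ shows $\rho^\varepsilon, G^\varepsilon \in C\big((0,T], W^{2,p}(\R)\big)\cap C^1\big((0,T], L^p(\R)\big)$; standard: the source $(\rho u)_x$ lies in $C((0,T], W^{-1,p})$ hence the solution gains two derivatives off $t=0$, and the time-regularity follows from the equation. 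Finally, the weak formulation \eqref{weak:reg} is obtained by testing the strong equations against $\varphi,\psi\in C_c^\infty$ and integrating by parts (the third identity in \eqref{weak:reg} is just the definition $u^\varepsilon=\partial_x^{-1}(G^\varepsilon+\Lambda^\alpha\rho^\varepsilon)$ tested against $\zeta_x$ after moving $\Lambda^\alpha$ onto $\zeta$ by self-adjointness).

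The main obstacle is the control of $u^\varepsilon$: because $u^\varepsilon = \partial_x^{-1}(G^\varepsilon + \Lambda^\alpha\rho^\varepsilon)$ involves the \emph{nonlocal} antiderivative of a fractional Laplacian, one cannot simply bound $\|u^\varepsilon\|_\infty$ by a single $L^p$ norm; the estimate \eqref{eq:est2} of Lemma \ref{lem:UEstim}, which requires \emph{both} $\|\rho^\varepsilon\|_1$ and $\|\rho^\varepsilon\|_\infty$, is exactly what forces the fixed-point space to carry the $L^1\cap L^p$ norm (with $p$ eventually sent to $\infty$, or handled by interpolation), and makes the closing of the bilinear estimates slightly delicate. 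The parabolic regularity of the linear drift-diffusion part is, by contrast, entirely standard for fixed $\varepsilon>0$.
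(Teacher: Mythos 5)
Your overall architecture is the same as the paper's: a Banach fixed point for the Duhamel formulation \eqref{eq:DuhamelEps} in a space of type $L^1\cap L^p$, with $u^\varepsilon=\partial_x^{-1}(G^\varepsilon+\Lambda^\alpha\rho^\varepsilon)$ controlled through Lemma \ref{lem:UEstim}, followed by standard parabolic bootstrapping for \eqref{rhoG:reg}. However, there is a genuine gap at the one step you yourself flag as ``the main obstacle'': your bilinear estimate does not close in the space you declare. You bound $\|\rho u\|_p\leqslant\|\rho\|_p\|u\|_\infty$ and invoke \eqref{eq:est2} to control $\|u\|_\infty$, but \eqref{eq:est2} requires $\|\rho\|_1+\|\rho\|_\infty$, and the $L^\infty$ norm of $\rho$ is \emph{not} controlled in $C\big([0,T],L^1\cap L^p\big)$ for finite $p$; it also cannot be recovered by interpolation, since $L^1\cap L^p$ only controls $L^q$ for $q\in[1,p]$. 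So the phrase ``handled by interpolation'' does not rescue the argument, and ``$p$ eventually sent to $\infty$'' is not something the contraction itself can do.

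There are two honest repairs. The paper's route: split $u=\partial_x^{-1}G+\partial_x^{-1}\Lambda^\alpha\rho$, bound $\|\partial_x^{-1}G\|_\infty\leqslant\|G\|_1$ directly, and place $\partial_x^{-1}\Lambda^\alpha\rho$ in $L^q$ with $1/q=1/p-(1-\alpha)$ via the Hardy--Littlewood--Sobolev estimate \eqref{eq:est1} (which needs only $\|\rho\|_p$ for $p\in(1,1/(1-\alpha))$); H\"older then gives $\rho u\in L^s$ with $1/s=1/p+1/q$, and the gradient heat-kernel smoothing $\partial_x e^{\varepsilon t\Delta}:L^s\to L^p$ (with time singularity $(\varepsilon t)^{-\frac12-\frac12(\frac1s-\frac1p)}$, still integrable) lands the Duhamel term back in $L^p$. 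Note that this genuinely uses the off-diagonal smoothing, whereas you only quoted the diagonal case $q=r$; also your exponent should carry a factor $\frac12$ on $(1/q-1/r)$ in one dimension. The alternative repair is to run the fixed point in $L^\infty\big([0,T],L^1(\R)\cap L^\infty(\R)\big)$ (legitimate since $\rho_0,G_0\in L^1\cap L^\infty$, but one must give up strong continuity at $t=0$ in the top norm because the heat semigroup is not $C_0$ on $L^\infty$) and deduce $C\big([0,T],L^1\cap L^p\big)$ for finite $p$ a posteriori. Either fix is routine, but you need to commit to one; as written, the contraction estimate is not established. The remaining steps (uniqueness, passage from mild to weak formulation, and the parabolic regularity for fixed $\varepsilon>0$) are fine and match the paper.
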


\begin{proof}
    Here, we consider a mild solution \textit{i.e.}~a solution to the following Duhamel formulation of problem \eqref{eq:URhoGEps}
    \begin{nalign}
        \label{eq:DuhamelEps}
        \rho^\varepsilon(t) &= e^{\varepsilon t\Delta}\rho_0 + \int_0^t \partial_x e^{\varepsilon(t-s) \Delta} (\rho^\varepsilon (s) u^\varepsilon(s)) \ds, \\ 
        G^\varepsilon(t) & = e^{\varepsilon t\Delta}G_0 + \int_0^t \partial_x e^{\varepsilon(t-s) \Delta} (G^\varepsilon (s) u^\varepsilon(s) ) \ds, \\
        u^\varepsilon & = \partial_x^{-1} G^\varepsilon + \partial_x^{-1} \Lambda^\alpha \rho^\varepsilon.
    \end{nalign}
    where the heat semigroup $e^{\varepsilon t \Delta}$ is given as the convolution with the heat kernel $H(x,t) = ({4\pi t)^{-{1}/{2}}}\exp\big(-|x|^2/(4t)\big)$.

    It is  the well-known 
   that for all $\varepsilon>0$, each $1\leq p\leq q\leq \infty$ and all $v\in L^p(\R)$ the properties
\begin{equation*}
    e^{\varepsilon t \Delta} v \in C\big([0,\infty ); L^p(\R)\big), \qquad    \|e^{\varepsilon t \Delta} v\|_p \leqslant  \|v\|_p
\end{equation*}
and
\begin{equation*}
     \|\partial_x e^{\varepsilon t \Delta} v\|_q\leqslant  C (\varepsilon t)^{\frac12 \left(\frac1p -\frac1q\right) -\frac12} \|v\|_p.
\end{equation*}
hold true for all $t>0$. Moreover, by  Lemma \ref{lem:UEstim},
\begin{equation*}
    \|\rho^\varepsilon u^\varepsilon\|_s \leqslant \| \rho^\varepsilon\|_s \|\partial_x^{-1} G^\varepsilon\|_\infty + \| \rho^\varepsilon\|_p \| \partial_x^{-1} \Lambda^\alpha \rho^\varepsilon \|_q
    \leqslant \| \rho^\varepsilon\|_s \| G^\varepsilon\|_1 + C\| \rho^\varepsilon\|_p^2 
\end{equation*}
for exponents $p,q,s \in (1,\infty)$ satisfying 
$$
\frac1s = \frac1p +\frac1q \qquad \text{and} \qquad  \frac{1}{q}=\frac{1}{p}-(1-\alpha).
$$
Here, we recall also the interpolation inequality $\|v\|_q\leq \|v\|_1+\|v\|_p$ with $1\leqslant q\leqslant p\leqslant\infty$.

  Now,  we consider the space 
  $$X_T^p \equiv L^\infty\big([0,T], \Lp1 \cap L^p(\R)\big),$$ with the corresponding norm 
  \begin{equation*}
      \|v\|_{X^p_T} = \sup_{t\in [0,T]}(\No{v(t)} + \|v(t)\|_p).
  \end{equation*}
Now, by direct calculation, we obtain  the estimate
    \begin{nalign}
        \label{z:B1}
        \left\|\int_0^t \partial_x e^{(t-s)\varepsilon \Delta} \big(\rho^\varepsilon(s) u^\varepsilon(s)\big)  \ds\right\|_p
        \leqslant &C(T,\varepsilon) \|\rho^\varepsilon\|_{X_T^p}^2\|G^\varepsilon\|_{X_T^p}^2 \quad \text{for all} \quad t\in [0,T],
    \end{nalign}
    with $\lim\limits_{T\to 0} C(T,\varepsilon)=0$.
   Analogously, we compute the $L^p$-norm of
    $$
    \int_0^t \partial_x e^{(t-s)\varepsilon \Delta} \big(G^\varepsilon(s) u^\varepsilon(s)\big)\ds .
    $$
   It is routine to show, the operator
    \begin{nalign}
        \mathcal{T}\begin{pmatrix}\rho \\ G \end{pmatrix} = \begin{pmatrix} 
        e^{\varepsilon t\Delta}\rho_0 + \int_0^t e^{\varepsilon(t-s) \Delta} (\rho^\varepsilon u^\varepsilon)_x \ds \\
        e^{\varepsilon t\Delta}G_0 + \int_0^t e^{\varepsilon(t-s) \Delta} (G^\varepsilon u^\varepsilon)_x \ds
        \end{pmatrix}
    \end{nalign}
    defines a contraction on a certain ball centered at 
    $\left(e^{\varepsilon t\Delta} \rho_0, e^{\varepsilon t\Delta}G_0 \right)\in X_T^p\times X_T^p$,
     for sufficiently small $T>0$. A standard proof of the uniqueness of these solutions in the space $X^p_T$ is also based on these estimates  and we skip it.

Similarly, it is routine to show that mild solutions are in fact weak solutions.
It follows from estimate \eqref{eq:est2} that $u^\varepsilon \in L^\infty(\R\times [0,T])$. Thus 
the parabolic regularity of $\rho^\varepsilon, G^\varepsilon$ stated in \eqref{rhoG:reg} is classical because
these functions solve the parabolic equations in \eqref{eq:URhoGEps} supplemented with initial  conditions $\rho_0,G_0\in L^p(\R)$.
\end{proof}


Next, we derive the equation for $u^\varepsilon$ in the case of regularized system \eqref{eq:URhoGEps}--\eqref{eq:iniReg}.
\begin{proposition}
    \label{prop:regular} Suppose that a triple $( u^\varepsilon,\rho^\varepsilon, G^\varepsilon)$ is a solution to the regularized problem \eqref{eq:URhoGEps}--\eqref{eq:iniReg}.
    Then $u^\varepsilon=u^\varepsilon(x,t)$ satisfies, in a weak sense,  the  equation 
        \begin{equation}
        \label{eq:URhoEps3}
        \big(u^\varepsilon_t + u^\varepsilon G^\varepsilon + \Lambda^\alpha(\rho^\varepsilon u^\varepsilon) \big)_x = \varepsilon \big(u^\varepsilon_{xx}\big)_x
    \end{equation}
    supplemented with the initial condition
    \begin{equation} \label{eq:URhoEps3:ini}
    u_x(x,0)= G_0(x)+\Lambda^\alpha\rho_0(x).
    \end{equation}
\end{proposition}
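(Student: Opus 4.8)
The plan is to read \eqref{eq:URhoEps3} as a single conservation law for the quantity $v^\varepsilon\equiv u^\varepsilon_x=G^\varepsilon+\Lambda^\alpha\rho^\varepsilon$, obtained by adding $\Lambda^\alpha$ applied to the first equation of \eqref{eq:URhoGEps} to the second equation. First I would record that, by the parabolic regularity \eqref{rhoG:reg} from Lemma \ref{prop:ExistLocalEps}, for every $t>0$ the pair $\rho^\varepsilon,G^\varepsilon$ solves the first two equations of \eqref{eq:URhoGEps} classically, while $u^\varepsilon\in L^\infty(\R\times[0,T])$ by estimate \eqref{eq:est2}. Since $\Lambda^\alpha$ commutes with $\partial_x$ and with $\partial_x^2$, applying $\Lambda^\alpha$ to $\rho^\varepsilon_t+(\rho^\varepsilon u^\varepsilon)_x=\varepsilon\rho^\varepsilon_{xx}$ and adding $G^\varepsilon_t+(G^\varepsilon u^\varepsilon)_x=\varepsilon G^\varepsilon_{xx}$ yields, after regrouping the diffusive terms into $\varepsilon\partial_x^2(G^\varepsilon+\Lambda^\alpha\rho^\varepsilon)$,
$$ v^\varepsilon_t+\big(u^\varepsilon G^\varepsilon+\Lambda^\alpha(\rho^\varepsilon u^\varepsilon)\big)_x=\varepsilon v^\varepsilon_{xx}. $$
Using the last equation of \eqref{eq:URhoGEps} in the form $v^\varepsilon=\partial_x u^\varepsilon$, this is precisely $\partial_x\big(u^\varepsilon_t+u^\varepsilon G^\varepsilon+\Lambda^\alpha(\rho^\varepsilon u^\varepsilon)-\varepsilon u^\varepsilon_{xx}\big)=0$, i.e.\ equation \eqref{eq:URhoEps3}; the initial condition \eqref{eq:URhoEps3:ini} then merely records $v^\varepsilon(\cdot,0)=G^\varepsilon(\cdot,0)+\Lambda^\alpha\rho^\varepsilon(\cdot,0)=G_0+\Lambda^\alpha\rho_0$.

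The one subtlety in this $t>0$ computation is the commutation $\partial_t\Lambda^\alpha\rho^\varepsilon=\Lambda^\alpha\partial_t\rho^\varepsilon$ together with $\Lambda^\alpha(\rho^\varepsilon u^\varepsilon)_x=\partial_x\Lambda^\alpha(\rho^\varepsilon u^\varepsilon)$, since $\Lambda^\alpha$ is unbounded on $L^p(\R)$ and Lemma \ref{prop:ExistLocalEps} only gives $\rho^\varepsilon\in C^1\big((0,T],L^p\big)\cap C\big((0,T],W^{2,p}\big)$. I would bypass this either by working distributionally on $\R\times(0,\infty)$ — where $\partial_t\Lambda^\alpha\rho^\varepsilon=\Lambda^\alpha\partial_t\rho^\varepsilon$ is automatic and one simply substitutes the classical identity $\partial_t\rho^\varepsilon=-(\rho^\varepsilon u^\varepsilon)_x+\varepsilon\rho^\varepsilon_{xx}$, whose right-hand side is a locally integrable function of controlled growth, so that its fractional Laplacian is a well-defined distribution — or, more concretely, by applying $\Lambda^\alpha$ to the Duhamel representation \eqref{eq:DuhamelEps} of $\rho^\varepsilon$, using that $\Lambda^\alpha$ commutes with $e^{\varepsilon t\Delta}$ and $\partial_x$, and then differentiating in $t$ for $t>0$, where parabolic smoothing (and boundedness of $\Lambda^\alpha$ from $H^s_p$ to $H^{s-\alpha}_p$ for $\alpha<2$) makes every term a continuous $L^p$-valued function. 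Either way one obtains \eqref{eq:URhoEps3} in $\mathcal{D}'\big(\R\times(0,\infty)\big)$, and the $t>0$ algebra is rigorous.

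Finally I would pass to the global weak formulation, which is the sense in which \eqref{eq:URhoEps3}--\eqref{eq:URhoEps3:ini} are meant, because $u^\varepsilon$ is only bounded at $t=0$: testing the conservation law for $v^\varepsilon$ against $\zeta\in C_c^\infty(\R\times[0,\infty))$ and integrating by parts moves the outer $\partial_x$, the interior $\partial_x$'s of the flux and diffusion terms, and every occurrence of $\Lambda^\alpha$ onto $\zeta$. By self-adjointness of $\Lambda^\alpha$ and the fact that $\Lambda^\alpha\zeta_x$ and its derivatives are smooth and decay like $|x|^{-1-\alpha}$, hence lie in $L^1\cap L^\infty$, every remaining space–time integral pairs $u^\varepsilon\in L^\infty$, or $G^\varepsilon,\rho^\varepsilon\in C([0,T],L^1\cap L^p)$, against bounded integrable kernels over a time interval on which $\zeta$ is supported, so all integrals converge; and the $t=0$ boundary term gives exactly $\int_\R(G_0+\Lambda^\alpha\rho_0)\zeta(\cdot,0)\dx$ because $G^\varepsilon(\cdot,t)\to G_0$ and $\rho^\varepsilon(\cdot,t)\to\rho_0$ in $L^1\cap L^p$ as $t\to0^+$ by \eqref{spaces:lem}. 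The genuine difficulty here is precisely this $t=0$ bookkeeping: it is essential that in the final formulation $\Lambda^\alpha$ never acts on $\rho^\varepsilon$ or on $\rho^\varepsilon u^\varepsilon$ — objects for which no Sobolev control near $t=0$ is available — but only on the test function, where it is harmless.
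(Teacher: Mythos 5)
Your proposal is correct and follows essentially the same route as the paper: both add $\Lambda^\alpha$ applied to the $\rho^\varepsilon$-equation to the $G^\varepsilon$-equation, use $u^\varepsilon_x=G^\varepsilon+\Lambda^\alpha\rho^\varepsilon$, and land on a weak formulation in which $\Lambda^\alpha$ acts only on the test function --- the paper does this in one step by choosing $\varphi=\Lambda^\alpha\zeta$ (approximated by compactly supported functions) in the first equation of \eqref{weak:reg} and $\psi=\zeta$ in the second. Your intermediate classical computation for $t>0$ and the $t=0$ bookkeeping are consistent with, but not needed beyond, what that direct substitution already delivers.
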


\begin{proof} 
  For arbitrary $\zeta\in C^\infty_c(\R\times [0,\infty))$,
we choose $\varphi=\Lambda^\alpha \zeta$ in first
equation of the definition of
weak solutions \eqref{weak:reg}. We
are allowed to do it by
a usual procedure, approximating
$\Lambda^\alpha \zeta$ by a sequence of
smooth compactly supported
functions. Adding the obtained
equation to the second one in \eqref{weak:reg}
with $\psi=\zeta$ and using  third
equation we obtain the weak
formulation of initial value
problem \eqref{eq:URhoEps3}-\eqref{eq:URhoEps3:ini}.
\end{proof}

\begin{lemma}\label{lem:u:est}
   Let  $( u^\varepsilon,\rho^\varepsilon, G^\varepsilon)$ be a local-in-time solution to problem \eqref{eq:URhoGEps}--\eqref{eq:iniReg} constructed in Lemma \ref{prop:ExistLocalEps} with  
   $u_0\equiv \partial^{-1}_x (G_0+\Lambda^\alpha \rho_0)$.
   Then
   \begin{equation}\label{ue;est:123}
       \|u^\varepsilon(t)\|_{\infty}\leqslant\|u_0\|_{\infty} \quad\text{for all}\quad t\in [0,T].
   \end{equation}
\end{lemma}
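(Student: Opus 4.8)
The plan is to reduce the claim to a maximum principle for $u^\varepsilon$. First I would feed Proposition~\ref{prop:regular} — which states that $\big(u^\varepsilon_t+u^\varepsilon G^\varepsilon+\Lambda^\alpha(\rho^\varepsilon u^\varepsilon)-\varepsilon u^\varepsilon_{xx}\big)_x=0$ — into an integration in $x$: the bracket is therefore a function $c(t)$ independent of $x$. Because $u_0=\partial_x^{-1}(G_0+\Lambda^\alpha\rho_0)$ and, by the third equation of \eqref{eq:URhoGEps}, $u^\varepsilon(\cdot,t)=\partial_x^{-1}\big(G^\varepsilon(\cdot,t)+\Lambda^\alpha\rho^\varepsilon(\cdot,t)\big)$, the parabolic regularity \eqref{rhoG:reg} together with Lemma~\ref{lem:FracEstim} gives integrability/decay of $G^\varepsilon(\cdot,t)+\Lambda^\alpha\rho^\varepsilon(\cdot,t)$, hence $u^\varepsilon(x,t)\to0$ as $x\to-\infty$; letting $x\to-\infty$ in the bracket (all other terms vanish there as well) forces $c(t)\equiv0$, so
\begin{equation*}
 u^\varepsilon_t+u^\varepsilon G^\varepsilon+\Lambda^\alpha(\rho^\varepsilon u^\varepsilon)=\varepsilon\,u^\varepsilon_{xx}.
\end{equation*}
Substituting $G^\varepsilon=u^\varepsilon_x-\Lambda^\alpha\rho^\varepsilon$ and using the algebraic identity \eqref{rel:0} exactly as in passing from \eqref{eq:URhoConv} to \eqref{eq:URhoFractional}, this becomes the regularized Euler alignment equation
\begin{equation*}
 u^\varepsilon_t+u^\varepsilon u^\varepsilon_x=\int_\R\frac{\rho^\varepsilon(y,t)\big(u^\varepsilon(y,t)-u^\varepsilon(x,t)\big)}{|y-x|^{1+\alpha}}\,\dy+\varepsilon\,u^\varepsilon_{xx},
\end{equation*}
the integral being finite since $u^\varepsilon(\cdot,t)$ is Lipschitz (from $u^\varepsilon_x=G^\varepsilon+\Lambda^\alpha\rho^\varepsilon$ and \eqref{rhoG:reg}) and $\rho^\varepsilon(\cdot,t)$ is bounded.

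Next I would run a comparison argument instead of differentiating a supremum. Set $M:=\|u_0\|_\infty$; the constant functions $\pm M$ solve the displayed equation (the transport term and the nonlocal term both vanish on constants) and $-M\leqslant u_0\leqslant M$. Subtracting, $w:=u^\varepsilon-M$ satisfies
\begin{equation*}
 w_t+u^\varepsilon w_x-\int_\R\frac{\rho^\varepsilon(y,t)\big(w(y,t)-w(x,t)\big)}{|y-x|^{1+\alpha}}\,\dy=\varepsilon\,w_{xx},\qquad w(\cdot,0)\leqslant0,
\end{equation*}
with bounded drift $u^\varepsilon$ and a nonlocal term which, because $\rho^\varepsilon\geqslant0$ by \eqref{Pz}, is nonpositive at any point where $w$ attains a positive maximum; there $w_x=0$ and $w_{xx}\leqslant0$, so the equation forces $\partial_t w\leqslant0$. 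The standard maximum principle for bounded solutions of linear nonlocal parabolic equations on $\R$ — if desired, made rigorous by subtracting a strict supersolution $\eta\big(t+(1+x^2)^{\beta/2}\big)$ with $\beta\in(0,1)$ and letting $\eta\searrow0$, which uses only that $w$ is bounded (known from Lemma~\ref{prop:ExistLocalEps}) and that the operator applied to such a barrier is controlled — then yields $w\leqslant0$, i.e.\ $u^\varepsilon\leqslant M$ on $\R\times[0,T]$. Applying the same reasoning to $-M-u^\varepsilon$ gives $u^\varepsilon\geqslant-M$, which is \eqref{ue;est:123}.

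I expect the only genuine difficulty to be technical: justifying the pointwise manipulations above, namely enough smoothness of $u^\varepsilon$ (continuity in $(x,t)$ of $u^\varepsilon_{xx}$, of $\Lambda^\alpha(\rho^\varepsilon u^\varepsilon)$, and of the singular integral) and the evaluation of the equation at a spatial extremum on the unbounded line. The regularity comes from bootstrapping \eqref{rhoG:reg} through $u^\varepsilon_x=G^\varepsilon+\Lambda^\alpha\rho^\varepsilon$ and Sobolev embeddings, taking the exponent $p$ large and using $\alpha<1$; the non-compactness is absorbed into the comparison-with-constants formulation together with the barrier, which replaces any delicate "maximum at infinity" analysis (note, as a consistency check, that $u^\varepsilon(\cdot,t)\to0$ as $x\to-\infty$ and $u^\varepsilon(\cdot,t)\to M_G$ as $x\to+\infty$, and $M_G=\lim_{x\to+\infty}u_0(x)\in[0,M]$ since $G_0\geqslant0$ and $\int_\R\Lambda^\alpha\rho_0=0$). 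The conceptual content — that the normalization $u_0=\partial_x^{-1}(G_0+\Lambda^\alpha\rho_0)$ is exactly what kills the integration constant $c(t)$, after which the kernel's positivity and $\rho^\varepsilon\geqslant0$ do the rest — is what drives the proof.
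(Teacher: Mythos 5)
Your proposal is correct and its first half coincides with the paper's: both derive the pointwise equation $u^\varepsilon_t+u^\varepsilon G^\varepsilon+\Lambda^\alpha(\rho^\varepsilon u^\varepsilon)=\varepsilon u^\varepsilon_{xx}$ from Proposition \ref{prop:regular} (your explicit verification that the integration ``constant'' $c(t)$ vanishes via the normalization $u_0=\partial_x^{-1}(G_0+\Lambda^\alpha\rho_0)$ and decay at $-\infty$ is in fact spelled out more carefully than in the paper, which passes to problem \eqref{problem:123} without comment), and both then rewrite it via \eqref{rel:0} as the nonlocal advection--diffusion equation and exploit $\rho^\varepsilon\geqslant 0$ so that the nonlocal term has a sign at a maximum. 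Where you diverge is the implementation of the maximum principle: the paper follows \cite{DI06}, taking a maximizing sequence $\{x_n\}$, using $u^\varepsilon_x(x_n,t)\to 0$, $\liminf u^\varepsilon_{xx}(x_n,t)\leqslant 0$ and a Taylor expansion in time to show that $t\mapsto\sup_x u^\varepsilon(x,t)$ is nonincreasing, whereas you compare with the constant supersolutions $\pm\|u_0\|_\infty$ and force the maximum of $w-\eta\bigl(t+(1+x^2)^{\beta/2}\bigr)$ to be attained and to occur at $t=0$. Your route is more self-contained (no appeal to the lemmas of \cite{DI06} about extrema at infinity) at the cost of one computation you should make explicit: at the touching point the barrier contributes the error terms $\eta\,u^\varepsilon g'$, $\varepsilon\eta g''$ and $\eta\int_\R\rho^\varepsilon(y)\bigl(g(y)-g(x_0)\bigr)|y-x_0|^{-1-\alpha}\,\mathrm{d}y$ with $g(x)=(1+x^2)^{\beta/2}$; since $|g'|\leqslant\beta$ and $|g''|\leqslant C\beta$, the last integral is bounded by $C\beta\bigl(\|\rho^\varepsilon\|_1+\|\rho^\varepsilon\|_\infty\bigr)$ uniformly in $x_0$ (split at $|y-x_0|=1$ and use the global Lipschitz bound on $g$), so all error terms are $O(\beta\eta)$ and are dominated by the $\eta\,\partial_t$ contribution once $\beta$ is chosen small relative to $\|u^\varepsilon\|_{L^\infty(\R\times[0,T])}$ and $\|\rho^\varepsilon\|_{L^1\cap L^\infty}$ (or, equivalently, once the barrier is taken as $\eta(Kt+g(x))$ with $K$ large). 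Both proofs share the same regularity caveat, which the paper resolves by approximating $\rho_0,G_0$ by smooth data and using continuous dependence, exactly as you anticipate.
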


\begin{proof}
This is more-or-less known reasoning, hence, we
present a draft of the
proof, only.
Mild solutions of the
regularized problem \eqref{eq:URhoGEps}--\eqref{eq:iniReg}
 depend continuously
(in the norms of the space in \eqref{spaces:lem})
upon initial conditions.
Thus, approximating
$\rho_0$ and $G_0$ by smooth smooth functions, 
we may
assume that $\rho^\varepsilon$ and $G^\varepsilon$ and, consequently, also  
$
u^\varepsilon =\partial^{-1}\left(G^\varepsilon+\Lambda^\alpha \rho^\varepsilon\right)
$
are as regular as
required in this proof.
Thus, using equation \eqref{eq:URhoEps3}, the function $u^\varepsilon$
solves the initial value problem
  \begin{nalign}\label{problem:123}
        &u^\varepsilon_t + u^\varepsilon G^\varepsilon + \Lambda^\alpha(\rho^\varepsilon u^\varepsilon)  = \varepsilon u^\varepsilon_{xx}\\
        &u^\varepsilon (x,0)=u_0(x)=\partial_x^{-1}\big(G_0(x)+\Lambda^\alpha \rho_0(x)\big),
 \end{nalign}
 where $u_0\in L^\infty(\R)\cap C(\R)$ for $G_0\in L^1(\R)$ and $\rho_0\in L^1(\R)\cap L^\infty(\R)$, by inequality \eqref{eq:est2}.
Recall also, that
by the definition of the fractional Laplacian \eqref{eq:frac} and relation \eqref{rel:0}, first equation in \eqref{problem:123} can be written in the form 
 \begin{equation}
        \label{eq:URhoEps1}
            u_t^\varepsilon+ u^\varepsilon u^\varepsilon_x - \int_\R  \frac{u^\varepsilon(y,t) - u^\varepsilon(x,t)}{|x-y|^{1+\alpha}} \rho^\varepsilon (y,t) \dy= \varepsilon u^\varepsilon_{xx}.
    \end{equation}

We are going to show, following the reasoning
from \cite[Prop.~2]{DI06}, that
sufficiently regular solutions
of equation \eqref{eq:URhoEps1} satisfy estimate \eqref{ue;est:123}.

Fix $t>0$ and assume
$\sup\limits_{x\in\R} u^\varepsilon (x, t) > 0$ (the complementary inequality can be handled analogously).
Let $\{x_n\}_{n=1}^\infty\subset \R$ be
a sequence such that
$$
\lim_{n\to\infty} u^\varepsilon(x_n,t) = \sup_{x\in\R} u(x,t).
$$
By \cite[Thm.~2]{DI06}, we have
\begin{equation}\label{rel1}
\lim_{n\to\infty} u^\varepsilon_x(x_n,t) =0.
\end{equation}
Moreover, repeating arguments
from \cite[Proof of Thm.~2]{DI06} 
we obtain 
\begin{equation}\label{rel2}
\liminf_{n\to\infty} u^\varepsilon_{xx}(x_n,t) \leq 0
\end{equation}
and 
\begin{equation}\label{rel3}
\liminf_{n\to\infty}
 \int_\R  \frac{u^\varepsilon(y,t) - u^\varepsilon(x_n,t)}{|x-y|^{1+\alpha}} \rho^\varepsilon (y,t) \dy
\leq 0.
\end{equation}
Fix $a>0$.
For $u_{tt}$ bounded on $\R\times [a,T]$,
for all $t\in (a,T)$ and $\tau \in (a,t)$,
by the Taylor expansion,
we obtain
\begin{nalign}
u(x_n,t) &\leq u(x_n, t-\tau) +\tau u_t(x_n, t)+C\tau^2\\
&\leq \sup_{x\in\R} u(x, t-\tau) +\tau u_t(x_n, t)+C\tau^2.
\end{nalign}
Using equation \eqref{eq:URhoEps1} for $u_t(x_n,t)$, 
passing to  $\liminf$, and applying 
 relations \eqref{rel1}--\eqref{rel3}, 
 we
obtain the inequality
$$
 \sup_{x\in\R} u(x, t) \leq \sup_{x\in\R} u(x, t-\tau)+ C\tau^2.
$$
or equivalently
$$
\frac{\sup_{x\in\R} u(x, t) - \sup_{x\in\R} u(x, t-\tau)}{\tau}\leq C\tau.
$$
It is easy to prove, that the function
$h(t)=\sup\limits_{x\in \R} u(x,t)$ is Lipschitz continuous on $[a,T]$
when $u_t$ is bounded on $\R\times [a,T]$,  and herefore the last inequality implies
$$
\frac{d}{dt} \left( \sup_{x\in \R} u(x,t)\right)\leq 0 \quad \text{almost everywhere in} \quad t\in(a,T).
$$
Since $a\in (0,T)$ is  arbitrary, by 
continuity of $u^\varepsilon(x,t)$ we obtain
$$
 \sup_{x\in\R} u(x, t) \leq \sup_{x\in\R} u_0(x).
$$
The proof with ``sup'' replaced
by ``inf'' is analogous.
\end{proof}

\begin{proof}[Proof of Theorem \ref{thm:GlobalExistEps}]
    Let $(u^\varepsilon, \rho^\varepsilon, G^\varepsilon)$ be the local-in-time solution constructed in 
    Lemma \ref{prop:ExistLocalEps}. 
    First, let us show, that the properties of this solution 
    stated in Theorem \ref{thm:GlobalExistEps} hold true
    for all $t \in [0, T]$. 

    Since 
    the heat kernel $H(x,t) = (4\pi t)^{-{1}/{2}}\exp\big(-|x|^2/(4t)\big)$ satisfies
    \begin{nalign}\int_\R H(x,t) \dx = 1 \quad  \text{and} \quad \int_\R  H_x(x,t) \dx = 0, \end{nalign}  
    integrating the Duhamell equations in ~\eqref{eq:DuhamelEps}
    and applying the Fubini theorem,  proves that the masses  $M_\rho$ and  $M_G$ given by formulas \eqref{Ms}
    are independent of time. 
    
    Inequality \eqref{u:infty} is proved in Lemma \ref{lem:u:est}.

    The function  $w\equiv a \rho^\varepsilon - G^\varepsilon$ satisfies the equation
    \begin{equation}\label{eq:w}
        w_t+(u^\varepsilon w)_x=\varepsilon w_{xx}, 
    \end{equation}
    where $u^\varepsilon$ is a bounded and continuous function. Hence,  inequalities \eqref{Pz} are the immediate consequence 
    of the fact, that solutions to parabolic equation \eqref{eq:w} remain nonnegative if initial conditions are so. 

    Notice, that estimate \eqref{Pe} for $p = 1$  follows directly from the mass conservation~\eqref{Ms} because $\rho^\varepsilon \geqslant 0$. In order to proceed for $p\in(1,\infty)$, we multiply first equation in~\eqref{eq:URhoGEps} by $p \left(\rho^\varepsilon\right)^{p-1}$ and integrate:
    \begin{nalign}
        \frac{d}{dt}\int_\R \left(\rho^\varepsilon\right)^p \dx + p \int_\R (\rho^\varepsilon u^\varepsilon)_x \left(\rho^\varepsilon\right)^{p-1} \dx = {\varepsilon p} \int_\R \rho^\varepsilon_{xx} \left(\rho^\varepsilon\right)^{p-1} \dx.
    \end{nalign}
    Next, we integrate the middle term twice by parts
    \begin{nalign}
        p\int_\R (\rho^\varepsilon u^\varepsilon)_x \left(\rho^\varepsilon\right)^{p-1} \dx &= -(p-1)p \int_\R \rho^\varepsilon u^\varepsilon \left(\rho^\varepsilon\right)^{p-2}\rho^\varepsilon_x \dx \\  &= -(p-1) \int_\R  u^\varepsilon \big(\left(\rho^\varepsilon\right)^p\big)_x \dx = (p-1) \int_\R  u^\varepsilon_x \left(\rho^\varepsilon\right)^p \dx
    \end{nalign}
    and we substitute the relation $u_x^\varepsilon = G^\varepsilon + \Lambda^\alpha\rho^\varepsilon$ to obtain
    \begin{nalign} 
        \label{z:B3}
        \frac{d}{dt}\int_\R \left(\rho^\varepsilon\right)^p \dx = &- (p-1) \int_\R \left(\rho^\varepsilon\right)^p \Lambda^\alpha \rho^\varepsilon \dx \\ &- (p-1) \int_\R  \left(\rho^\varepsilon\right)^p G^\varepsilon \dx -  \varepsilon p(p-1) \int_\R (\rho^\varepsilon_x)^2 \left(\rho^\varepsilon\right)^{p-2} \dx.
    \end{nalign}
    Since $\rho^\varepsilon\geqslant 0$ and $G^\varepsilon\geqslant 0$,  the last two terms on the right hand side are nonpositive. 
    We estimate the remaining term $(p-1)\int_\R \left(\rho^\varepsilon\right)^p \Lambda^\alpha \rho^\varepsilon \dx$ by using 
    the Strook-Varopoulos inequality from   
    Lemma \ref{thm:FracStrooqVarop} to obtain 
    \begin{nalign}
        \label{eq:RhoDeriv}
        \frac{d}{dt}\int_\R \left(\rho^\varepsilon\right)^p \dx  \leqslant -4\frac{p(p-1)}{(p+1)^2}\int_\R \left( \Lambda^\frac{\alpha}{2} (\rho^\varepsilon)^\frac{p+1}{2} \right)^2 \dx, 
    \end{nalign}
which implies  the estimates  $\left\|\rho^{\varepsilon}(t)\right\|_p \leqslant \left\|\rho_0\right\|_p$ 
as well as (by \eqref{Pz})
 $\left\|G^{\varepsilon}(t)\right\|_p \leqslant a\left\|\rho_0\right\|_p$
for each $p\in (1,\infty)$ and all $t\in [0,T]$.
The remaining $L^\infty$-estimates in inequalities \eqref{Pe} are obtained by passing  to the limit with $p\to\infty$.

    Next, we combine Lemma \ref{thm:FracGagiNiren} with $r=p+1$ and $q=p$ together with the mass (hence, the $L^1$-norm) conservation \eqref{Ms}:
    \begin{nalign}
        \label{eq:RhoBelow}
        \int_\R \left( \Lambda^\frac{\alpha}{2} \rho^\varepsilon)^\frac{p+1}{2} \right)^2 \geqslant C(p,\alpha) 
        \frac{1}{M^\frac{1+p\alpha}{p-1}_\rho} \left(\int_\R  \left(\rho^\varepsilon\right)^p \dx\right)^\frac{p(p+\alpha)}{p-1}.
    \end{nalign}
    We apply inequality \eqref{eq:RhoBelow} in
      \eqref{eq:RhoDeriv}  to obtain the estimate 
    \begin{nalign}
         \frac{d}{dt}\int_\R \left(\rho^\varepsilon\right)^p \dx  \leqslant - C(p,\alpha) 
        \frac{1}{M^\frac{1+p\alpha}{p-1}_\rho} \left(\int_\R  \left(\rho^\varepsilon\right)^p \dx\right)^\frac{p(p+\alpha)}{p-1}
    \end{nalign}
   which is valid for each $p\in(1,\infty)$.  By solving this differential inequality, we prove  decay estimate \eqref{Pe2}.

To conclude the proof, we notice, that 
 the norms $\|\rho^\varepsilon(t)\|_p$ and $\|G^\varepsilon(t)\|_p$ are {\it a priori} bounded in $t>0$ for each $p\in [1,\infty]$ ({\it cf.} estimates \eqref{Pe}), hence the solution $(\rho^\varepsilon(x,t),G^\varepsilon(x,t)$ can be extended to all $t>0$. In this way,  relations 
\eqref{Ms}-\eqref{Pe2} hold true for  all $t>0$. 
\end{proof}

\begin{proof}[Proof of Theorem \ref{thm:ExistEstimatesEpsAB}]
   Inequalities \eqref{ab:est} are  immediate consequence of the comparison principle, in the same way as in the proof of estimates 
   \eqref{Pz}.
   
   It follows from  equation  \eqref{z:B3}, the Strook-Varopoulos inequality (Lemma \ref{thm:FracStrooqVarop}) and the inequality  $G(x,t)\geqslant b \rho(x,t)$ that 
    \begin{nalign}
         \frac{d}{dt}\int_\R \left(\rho^\varepsilon\right)^p \dx  \leqslant  - b(p-1) \int_\R  \left(\rho^\varepsilon\right)^{p+1} \dx.
    \end{nalign}
    By the H\"older inequality and the mass conservation \eqref{Ms}, we obtain
    \begin{nalign}
        \int_\R \left(\rho^\varepsilon\right)^{p} \dx= \int_\R  \left(\rho^\varepsilon\right)^\frac{1}{p}\left(\rho^\varepsilon\right)^{p-\frac{1}{p}}\dx \leqslant M_\rho^\frac{1}{p}\left(\int_\R \left(\rho^\varepsilon\right)^{p+1}\dx\right)^\frac{p-1}{p}
    \end{nalign}
    and therefore 
    \begin{nalign}
        \frac{d}{dt}\int_\R \left(\rho^\varepsilon\right)^p \dx  \leqslant  - b(p-1) M_\rho^\frac{1}{1-p}\left( \int_\R  \left(\rho^\varepsilon\right)^{p} \dx \right)^\frac{p}{p-1}. 
    \end{nalign} 
    Thus, solving this differential inequality we obtain  inequality \eqref{est:rho:G:opt} for $p\in[1,\, \infty)$ with $C(p) = b^{-\frac{p-1}{p}}$. 
    Since $\sup\limits_{p\in(1,\infty)} C(p)<\infty$, we also have this inequality for $p=\infty$. 
    The corresponding estimate for $G$ in \eqref{est:rho:G:opt} is a consequence of inequality~\eqref{ab:est}.
\end{proof}

\section{Compactness estimates and passage to the limit} 
By estimates \eqref{u:infty} and \eqref{Pe},
the  sequence of solutions $(\rho^\varepsilon, G^\varepsilon, u^\varepsilon)$ 
of the regularized problem \eqref{eq:URhoGEps}--\eqref{eq:iniReg}
is uniformly bounded with respect to $\varepsilon>0$ in the following spaces
\begin{nalign}\label{spaces}
    \lbrace \rho^\varepsilon \rbrace_{\varepsilon>0} &\subseteq C \big([0,\infty),\, \Lp{} \big) \quad \text{for each} \quad p\in[1,\infty], \\
    \lbrace G^\varepsilon \rbrace_{\varepsilon>0} & \subseteq  C \big([0,\infty), \, \Lp{} \big) \quad \text{for each} \quad p\in[1,\infty].\\ 
\end{nalign}
 Moreover,  the sequence $u^\varepsilon$ is uniformly bounded with respect to $\varepsilon>0$ and $R>0$ in the space 
\begin{nalign}
    \label{eq:ULiHa}
    \lbrace u^\varepsilon \rbrace_{\varepsilon>0} &\subseteq L^\infty\big( [0,\infty), \, H^{1-\alpha}_q([-R,R]) \big) \quad \text{for each} \quad q\in \big(1, \infty\big) 
\end{nalign}
which  is a straightforward consequence of Lemmas \ref{thm:uFracEst} and \ref{thm:vfracestim} combined with estimates \eqref{u:infty} and \eqref{Pe}.

\begin{lemma}
    \label{thm:UTEst}
    The sequence $\lbrace \partial_t u^\varepsilon \rbrace_{\varepsilon>0}$ is bounded in $L^\infty\big( [0,\infty), W^{2,1}(\R)^* \big)$,
    where we denote by $W^{2,1}(\R)^*$  the dual space  to the Sobolev space $W^{2,1}(\R)$. 
\end{lemma}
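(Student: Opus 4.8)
The plan is to write $\partial_t u^\varepsilon$ explicitly in terms of $\rho^\varepsilon$, $G^\varepsilon$ and $u^\varepsilon$ and then estimate the resulting expression, tested against an arbitrary $\phi\in W^{2,1}(\R)$, using only the uniform bounds \eqref{u:infty}, \eqref{Ms}, \eqref{Pz}, \eqref{Pe} already established. Differentiating the third equation in \eqref{eq:URhoGEps} in time, inserting the first two equations of \eqref{eq:URhoGEps}, and using the commutation $\partial_x^{-1}\Lambda^\alpha=\Lambda^\alpha\partial_x^{-1}$ from the proof of Lemma \ref{lem:UEstim} (see \eqref{eq:InFrac}), together with the decay of $G^\varepsilon$, $G^\varepsilon_x$ and $\rho^\varepsilon u^\varepsilon$ at $-\infty$ guaranteed by the parabolic regularity \eqref{rhoG:reg} and estimates \eqref{Pe}, one obtains the undifferentiated form of \eqref{eq:URhoEps3}, i.e.\ the identity (cf.\ \eqref{problem:123})
\[
\partial_t u^\varepsilon = \varepsilon u^\varepsilon_{xx} - u^\varepsilon G^\varepsilon - \Lambda^\alpha(u^\varepsilon\rho^\varepsilon),
\]
valid for $t>0$ in the sense of distributions in $x$. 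Since $C_c^\infty(\R)$ is dense in $W^{2,1}(\R)$ and $W^{2,1}(\R)\hookrightarrow L^\infty(\R)$, it then suffices to bound $\big|\langle\partial_t u^\varepsilon(t),\phi\rangle\big|$ for $\phi\in C_c^\infty(\R)$ with $\|\phi\|_{W^{2,1}}\le 1$, uniformly in $t\ge 0$ and in $\varepsilon\in(0,1]$ (only this range matters for the limit $\varepsilon\searrow 0$).

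The first term is harmless: by \eqref{u:infty} and by the mass conservation \eqref{Ms} together with the positivity $G^\varepsilon\ge 0$ from \eqref{Pz} one has $\|u^\varepsilon(t)G^\varepsilon(t)\|_1\le\|u_0\|_\infty\|G_0\|_1$, so $u^\varepsilon G^\varepsilon$ is uniformly bounded in $L^1(\R)\hookrightarrow W^{2,1}(\R)^*$. The viscous term is handled by moving both derivatives onto the test function: $\big|\varepsilon\int_\R u^\varepsilon_{xx}\phi\,dx\big|=\varepsilon\big|\int_\R u^\varepsilon\phi_{xx}\,dx\big|\le\varepsilon\|u_0\|_\infty\|\phi_{xx}\|_1\le\|u_0\|_\infty\|\phi\|_{W^{2,1}}$ for $\varepsilon\le 1$, again using \eqref{u:infty}. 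The nonlocal term is the one requiring care: $\Lambda^\alpha(u^\varepsilon\rho^\varepsilon)$ cannot be estimated in $L^1$ uniformly in $\varepsilon$ (that would need a uniform bound on second derivatives of $u^\varepsilon\rho^\varepsilon$), so instead one uses the self-adjointness of $\Lambda^\alpha$ to write $\int_\R\Lambda^\alpha(u^\varepsilon\rho^\varepsilon)\phi\,dx=\int_\R u^\varepsilon\rho^\varepsilon\,\Lambda^\alpha\phi\,dx$ and estimates
\[
\Big|\int_\R u^\varepsilon\rho^\varepsilon\,\Lambda^\alpha\phi\,dx\Big|\le\|u^\varepsilon\rho^\varepsilon\|_\infty\,\|\Lambda^\alpha\phi\|_1\le\|u_0\|_\infty\|\rho_0\|_\infty\cdot C\big(\|\phi\|_1+\|\phi_{xx}\|_1\big),
\]
where $\|u^\varepsilon\rho^\varepsilon\|_\infty\le\|u_0\|_\infty\|\rho_0\|_\infty$ follows from \eqref{u:infty} and the $p=\infty$ case of \eqref{Pe}, and the last inequality is Lemma \ref{lem:FracEstim}.

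Collecting the three bounds gives $\|\partial_t u^\varepsilon(t)\|_{W^{2,1}(\R)^*}\le C$ with $C$ depending only on $\alpha$, $\|u_0\|_\infty$, $\|\rho_0\|_1$, $\|\rho_0\|_\infty$, $\|G_0\|_1$, hence independent of $t\ge 0$ and of $\varepsilon\in(0,1]$, which is the assertion. The main (and essentially only nonroutine) point is the treatment of $\Lambda^\alpha(u^\varepsilon\rho^\varepsilon)$: transferring the fractional Laplacian onto the smooth test function and invoking the $L^1$-estimate of Lemma \ref{lem:FracEstim} — this is also where $\alpha\in(0,1)$ is used, the bound failing at $\alpha=1$. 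A minor technical point to verify along the way is the legitimacy of the distributional identity for $\partial_t u^\varepsilon$ and of the integrations by parts, which follows from the parabolic regularity \eqref{rhoG:reg} of $\rho^\varepsilon$, $G^\varepsilon$ and the decay at infinity of $u^\varepsilon_x=G^\varepsilon+\Lambda^\alpha\rho^\varepsilon$.
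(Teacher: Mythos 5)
Your proposal is correct and follows essentially the same route as the paper: it derives the identity $\partial_t u^\varepsilon = \varepsilon u^\varepsilon_{xx} - u^\varepsilon G^\varepsilon - \Lambda^\alpha(u^\varepsilon\rho^\varepsilon)$ (the paper gets this from Proposition \ref{prop:regular}), transfers $\Lambda^\alpha$ onto the test function, and invokes Lemma \ref{lem:FracEstim} together with the uniform $L^\infty$ bounds \eqref{u:infty} and \eqref{Pe}. The only (harmless) differences are that you bound $u^\varepsilon G^\varepsilon$ in $L^1$ via mass conservation rather than in $L^\infty\times L^\infty$ against $\|\varphi\|_1$, and that you explicitly restrict to $\varepsilon\leqslant 1$ for the viscous term.
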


\begin{proof}
    By Proposition \ref{prop:regular}, for arbitrary $T>0$ and $\varphi \in L^\infty\big( [0,T), W^{2,1}(\R) \big)$, we have
    \begin{nalign}
        \int_{\R} u_t^\varepsilon \varphi \dx 
        & = - \int_\R \rho^\varepsilon u^\varepsilon \Lambda^\alpha \varphi \dx  - \int u^\varepsilon G^\varepsilon\varphi \dx + \varepsilon \int_\R u^\varepsilon \varphi_{xx} \dx. 
    \end{nalign}
    Since, the functions $(\rho^\varepsilon, G^\varepsilon, u^\varepsilon)$ are uniformly bounded in $L^\infty$ with respect to $\varepsilon>0$ ({\it c.f.}~inequalities \eqref{u:infty} and \eqref{Pe}),  by  Lemma \ref{lem:FracEstim}, we obtain the estimate
    \begin{nalign}
        \sup_{t>0}\left|\int_{\R} u_t \varphi \dx \right| &\leqslant C \sup_{t>0} \big( \|\varphi(\cdot,t)\|_1 + \|\Lambda^\alpha\varphi(\cdot,t)\|_1 + \|\varphi_{xx}(\cdot,t)\|_1 \big) \\
        & \leqslant C \sup_{t>0} \|\varphi(\cdot,t)\|_{2,1},
    \end{nalign}
    with a constant $C = C\big(\Ni{\rho_0},\Ni{G_0},\Ni{u_0}\big) >0$ independent of $t>0$ and of $\varepsilon>0$.
\end{proof}

\begin{proof}[Proof of Theorem \ref{thm:GlobExist}] 
We pass to the limit with $\varepsilon\searrow 0$ in
 the  weak formulation \eqref{weak:reg} of  regularized problem \eqref{eq:URhoGEps}--\eqref{eq:iniReg}.
    Here, we also  apply the  weak formulation of problem \eqref{problem:123} for $u^\varepsilon$ supplemented with an initial datum $u_0\in \Li$:
 \begin{nalign}
    \label{weak:reg2}
          -  \int_0^\infty \int_\R u^\varepsilon \varphi_t \dx \dt & - \int_\R u_0 \varphi(\cdot,0) \dx 
            + \int_0^\infty \int_\R  u^\varepsilon G^\varepsilon \varphi \dx \dt   \\ 
            &+ \int_0^\infty \int_\R \rho^\varepsilon u^\varepsilon  \Lambda^\alpha\varphi \dx \dt = 
            \varepsilon \int_0^\infty \int_\R u^\varepsilon \varphi_{xx} \dx \dt.
\end{nalign}

Since the families 
$\lbrace \rho^\varepsilon\rbrace_{\varepsilon>0},  \lbrace G^\varepsilon\rbrace_{\varepsilon>0}, \lbrace u^\varepsilon \rbrace_{\varepsilon>0}$  are
bounded in the spaces  \eqref{spaces}, 
we may choose a subsequence $\{\varepsilon_k\}^\infty_{k=1}$ such that  
\begin{nalign}\label{weak-star:2}
    \begin{matrix}
        \rho^{\varepsilon_k} \to \rho, \\
        G^{\varepsilon_k}\to G,
    \end{matrix}
    & & & \text{weakly in} \; L^r \big([0,T],\, \Lp{} \big),
\end{nalign}
for all $r,p\in (1,\infty)$ and each $T>0$ (notice that if $\rho^{\varepsilon_k} \to \rho$ weakly in $L^r\big([0,T], \Lp{} \big)$ and in $L^{\tilde{r}}\big([0,T], \Lp{\tilde{p}} \big)$ then then it converges weakly in $L^z\big([0,T], \Lp{s} \big)$ for all $z \in (r,\tilde{r})$ and $s \in (p, \tilde{p})$).

To deal with  nonlinear terms, we also show
    \begin{nalign} \label{u:conv:strong}
         u^{\varepsilon_k} \to u\quad  \text{strongly in} \quad C\big([0,T], L^q([-R,R])\big),
    \end{nalign}
    for each  $R>0$, $T>0$, and  $q\in (1,\infty )$.
    Indeed, we consider three spaces 
    \begin{nalign}
       X  = H^{1-\alpha}_q([-R,R]), \qquad   Y  = L^q([-R,R]), \qquad Z  = W^{2,1}\big([-R,R]\big)^\ast,
    \end{nalign} 
    where embedding $X\subseteq Y$ is compact by Lemma \ref{thm:CompactEmbedding} and Corollary \ref{thm:Compact}. By the classical Sobolev embedding the injection 
    $W^{2,1}\big([-R,R]\big) \subseteq  L^q\big([-R,R]\big)$
    is continuous and hence the dual injection  $Y \subseteq Z$ is continuous. Thus, the Aubin-Lions-Simon Lemma~\cite{simon1986compact}, states that the embedding
    \begin{nalign} \label{imb2}
        \left\lbrace v\in L^\infty\big([0,T], X \big)\,:\, \partial_t v \in L^\infty\big([0,T],Z\big) \right\rbrace  \subseteq C\big([0,T], Y)\big)
    \end{nalign}
    is compact. In our case, by Lemma \ref{thm:UTEst} and property  \eqref{eq:ULiHa}, the sequence $\{u^{\varepsilon_k}\}_k$ is bounded in the space
    on the left-hand side of relation \eqref{imb2}.
    Consequently, by a standard diagonal argument,  convergence \eqref{u:conv:strong} holds true for each $T>0$, $R>0$, and $q\in (1,\infty)$.
    
    Now, by the weak convergence \eqref{weak-star:2} and the strong convergence \eqref{u:conv:strong}, we may pass immediately to the limit in the  linear terms   
    in equations \eqref{weak:reg} and \eqref{weak:reg2}, 
    namely, these containing only one  function either $\rho^\varepsilon$ or $G^\varepsilon$ or $u^\varepsilon$ multiplied by either $\varphi$ or $\varphi_t$ or $\varphi_x$ or $\varphi_{xx}$ with arbitrary $\varphi\in C_c^\infty\big(\R \times [0,\infty)\big)$.
    Similarly, 
    $$
    \int_0^\infty \int_\R \rho^{\varepsilon_k} \Lambda^\alpha \varphi \dx \dt \to  \int_0^\infty \int_\R \rho \Lambda^\alpha \varphi \dx \dt,
    $$
    because $ \Lambda^\alpha \varphi \in C \big([0,\infty); L^1(\R)\cap L^\infty (\R) \big)$ (see {\it e.g.} Lemma \ref{lem:FracEstim})
    and is compactly supported in time.
    Thus, for example, by third equation in system \eqref{weak:reg},
    we obtain the limit functions in \eqref{weak-star:2} and \eqref{u:conv:strong} satisfy the equation $u_x-\Lambda^\alpha \rho=G$ in the sense of distributions. 

Next, we combine the weak convergence of $\rho^{\varepsilon_k}$ and $G^{\varepsilon_k}$ from \eqref{weak-star:2}
with the strong convergence of  $u^{\varepsilon_k}$ in \eqref{u:conv:strong} and we take into account the compact support of $\varphi$ 
to pass to the following limits
\begin{nalign}
     \int_0^\infty \int_\R \rho^{\varepsilon_k} u^{\varepsilon_k} \varphi_x \dx \dt &\to   \int_0^\infty \int_\R \rho u\varphi_x \dx \dt,\\
        \int_0^\infty \int_\R G^{\varepsilon_k} u^{\varepsilon_k} \varphi_x \dx \dt &\to  \int_0^\infty \int_\R G u\varphi_x \dx \dt,\\
           \int_0^\infty \int_\R G^{\varepsilon_k} u^{\varepsilon_k} \varphi\dx \dt &\to   \int_0^\infty \int_\R G u\varphi \dx \dt.
\end{nalign}
To handle the remaining integral in equation \eqref{weak:reg2} with the integrand $u^\varepsilon\rho  \Lambda^\alpha\varphi$, we proceed analogously 
using the mentioned above properties of $\Lambda^\alpha\varphi$ with $\varphi \in C_c^\infty\big(\R \times [0,\infty)\big)$.

Next, we show that  properties \eqref{u:infty}-\eqref{Pe}
of solutions to the regularized system remains valid after passing to the limit $\varepsilon \to 0$.
    
By convergence \eqref{u:conv:strong} (after choosing a subsequence), we have $u(x,t)^\varepsilon \to u(x,t)$ almost everywhere in $(x,t)\in \R\times (0,\infty)$.
Hence, we may  pass to the limit in the inequality  ({\it cf.} \eqref{u:infty})
\begin{nalign}
|u^\varepsilon(x,t)|\leqslant\|u_0\|_\infty \quad\text{a.e. in} \quad x\in\R, t>0
\end{nalign}
to obtain estimate \eqref{u:inf:0}.

Inequalities \eqref{Garho:0} are deduced immediately after passing to the limit by using the weak convergence \eqref{weak-star:2}
in the following inequalities (resulting from estimates \eqref{Pz})
$$
\int_0^\infty \int_\R  G^\varepsilon\varphi  \dx\dt\geqslant 0 \quad \text{and}\quad 
\int_0^\infty \int_\R \big(a \rho^\varepsilon - G^\varepsilon \big)\varphi  \dx\dt\geqslant 0,
$$
 which are valid for each {\it nonnegative} $\varphi \in C_c^\infty\big(\R \times [0,\infty)\big)$.

By second inequality in \eqref{Pe}, for each $\varphi \in C_c^\infty\big( \R \times [0,\infty)\big)$ and each $p\in (1,\infty)$, with $p' = p/(p-1)$ we have
\begin{nalign}
    \left| \int_0^\infty \int_\mathbb{R} \rho^\varepsilon \varphi \dx \dt \right| \leqslant \|\rho_0\|_p \int_0^\infty \|\varphi(\cdot, t)\|_{p'} \dt.
\end{nalign}
Passing to the weak limit \eqref{weak-star:2} we obtain analogous inequality with $\rho^\varepsilon$ replaced by $\rho$ which means, that $\rho$ defines a bounded linear functional on $L^1\big( (0,\infty), L^{p'}(\R)\big)$. Thus, 
\begin{nalign}
    \|\rho\|_{\left(L^1( (0,\infty), L^{p'}(\R))\right)^*} =     \|\rho\|_{L^\infty\big( (0,\infty), L^{p}(\R)\big)} \leqslant \|\rho_0\|_p, 
\end{nalign}
which gives inequalities \eqref{eq:Lp:est} (the first one, follows from \eqref{Garho:0}).
We prove second inequality in \eqref{eq:DecEstim} analogously
by considering 
$$ \widetilde{\rho^\varepsilon}(x,t)=t^{-\left(-1+\frac{1}{p}\right)\frac{1}{2+\alpha}} \rho^\varepsilon(x,t)
$$
and using  estimate \eqref{Pe2}.

\end{proof}

\begin{proof}[Proof of Theorem \ref{thm:DecayEstiamtes}]       
  Here, the reasoning is completely analogous to the proof of  Theorem \ref{thm:GlobExist} 
  and is based on properties of solutions to the regularized problem proved in Theorem \ref{thm:ExistEstimatesEpsAB}.
\end{proof}

\section{Asymptotic behavior of solutions}
In the following, we denote by  $(u,\rho, G)$  a solution to both  problems  \eqref{eq:URhoFractional}--\eqref{eq:iniConv} and \eqref{eq:URhoG}--\eqref{eq:ini3eq} constructed in Theorem \ref{thm:GlobExist} and satisfying decay estimates \eqref{eq:RhoGDecayEstim} from Theorem \ref{thm:DecayEstiamtes}.
For each $\lambda>0$, we rescale this solution in the following way
\begin{align}\label{recal:sol}
    \rho^\lambda(x,t) \equiv \lambda \rho(\lambda x,\lambda t), \quad G^\lambda(x,t) \equiv  \lambda G(\lambda x,\lambda t), \quad u^\lambda(x,t) \equiv  u(\lambda x,\lambda t).
\end{align}

\begin{lemma}
 \label{thm:LambdaEstim1}
   For each $p\in (1,\infty)$, there exist a number $C= C(p,M_\rho)>0$  such that for all $\lambda>0$ and all $t>0$ the following inequalities hold true
    \begin{nalign}
        \|\rho^\lambda(\cdot,t)\|_p \leqslant C t^{-1+1/p}, \quad \|G^\lambda(\cdot,t)\|_p \leqslant C t^{-1+1/p}, \quad
        \|u^\lambda(\cdot,t)\|_\infty \leqslant \|u_0\|_\infty.
    \end{nalign}
\end{lemma}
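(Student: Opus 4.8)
The plan is to exploit the fact that the rescaling in \eqref{recal:sol} is precisely the scaling under which the decay estimates \eqref{eq:RhoGDecayEstim} are invariant; consequently the lemma reduces, via a change of variables in the spatial integral, to Theorem \ref{thm:DecayEstiamtes} together with estimate \eqref{u:inf:0}. There is no genuine difficulty here — it is a one-line scaling argument — and the only point worth emphasizing is that the resulting constant is independent of $\lambda$, which is automatic because the decay rate $t^{-1+1/p}$ is scale-critical for this rescaling.

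First I would record the elementary identity obtained by substituting $y=\lambda x$: for any $v\in\Lp{p}$ and $\lambda>0$,
\[
\|\lambda v(\lambda\,\cdot)\|_p^p=\lambda^p\int_\R |v(\lambda x)|^p\dx=\lambda^{p-1}\|v\|_p^p,
\]
so that $\|\rho^\lambda(\cdot,t)\|_p=\lambda^{1-1/p}\|\rho(\cdot,\lambda t)\|_p$ and, in the same way, $\|G^\lambda(\cdot,t)\|_p=\lambda^{1-1/p}\|G(\cdot,\lambda t)\|_p$. Applying the first bound in \eqref{eq:RhoGDecayEstim} at time $\lambda t$ gives
\[
\|\rho^\lambda(\cdot,t)\|_p\leqslant \lambda^{1-1/p}\,C(p)M_\rho^{1/p}(\lambda t)^{-1+1/p}=C(p)M_\rho^{1/p}\,t^{-1+1/p},
\]
the powers of $\lambda$ cancelling exactly. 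The identical computation with the second bound in \eqref{eq:RhoGDecayEstim} yields $\|G^\lambda(\cdot,t)\|_p\leqslant C(p)M_G^{1/p}t^{-1+1/p}$; integrating the inequality $0\leqslant G_0(x)\leqslant a\rho_0(x)$ from \eqref{eq:InitConComp} gives $M_G\leqslant aM_\rho$, so the right-hand side is bounded by $C(p)(aM_\rho)^{1/p}t^{-1+1/p}$, which I absorb into a constant depending only on $p$ and $M_\rho$.

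For the velocity, the rescaling $u^\lambda(x,t)=u(\lambda x,\lambda t)$ carries no prefactor, hence $\|u^\lambda(\cdot,t)\|_\infty=\|u(\cdot,\lambda t)\|_\infty$, and estimate \eqref{u:inf:0} from Theorem \ref{thm:GlobExist} gives $\|u(\cdot,\lambda t)\|_\infty\leqslant\|u_0\|_\infty$ for every $\lambda>0$ and $t>0$. (The same argument covers the endpoints $p=1$ and $p=\infty$, though only $p\in(1,\infty)$ is needed below.) This completes the proof; as noted above, no step is delicate — the content is entirely in the scale invariance of the estimates of Theorem \ref{thm:DecayEstiamtes}, which is why those sharper estimates, rather than the weaker ones of Theorem \ref{thm:GlobExist}, are the prerequisite for passing to the scaling limit in the sequel.
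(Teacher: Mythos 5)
Your proposal is correct and follows essentially the same route as the paper: a change of variables showing $\|\lambda v(\lambda\cdot)\|_p=\lambda^{1-1/p}\|v\|_p$, followed by the decay estimates \eqref{eq:RhoGDecayEstim} at time $\lambda t$ (whose scale-critical exponent makes the powers of $\lambda$ cancel) and estimate \eqref{u:inf:0} for the velocity. The only addition beyond the paper's one-line argument is your explicit remark that $M_G\leqslant aM_\rho$, which justifies writing the constant as $C(p,M_\rho)$; this is a harmless refinement.
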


\begin{proof}
    By decay estimate  \eqref{eq:RhoGDecayEstim}, changing variables, we  obtain for all $t>0$
    $$
        \|\rho^\lambda(\cdot,t)\|_p \leqslant \|\lambda \rho(\lambda \cdot,\lambda t)\|_p 
        \leqslant C (\lambda t)^{-1 + 1/p}\lambda^{-1 + 1/p} = C t^{-1 + 1/p}.
   $$
    The proofs of the reaming  inequalities are analogous.
\end{proof}

\begin{lemma}
    \label{thm:LambdaEstim2}
    Let $\alpha \in(0,1)$ and $q\in \big( 1/\alpha, \, \infty\big)$. For all $t_1, t_2\in (0,\infty)$ and each $R>0$ the sequence $\lbrace u^\lambda \rbrace_{\lambda \geqslant 1}$ is bounded in $L^\infty\big( [t_1, t_2], H^{1-\alpha}_q[-R,R] \big)$.
\end{lemma}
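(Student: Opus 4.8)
The plan is to imitate the argument that produced the uniform bound \eqref{eq:ULiHa} for $\{u^\varepsilon\}$: first recover the constraint $u_x=G+\Lambda^\alpha\rho$ in rescaled variables, then feed it together with the decay estimates of Lemma~\ref{thm:LambdaEstim1} into Lemma~\ref{thm:uFracEst}, and finally pass from a global $L^q$-bound on $\Lambda^{1-\alpha}u^\lambda$ to a bound in $H^{1-\alpha}_q([-R,R])$ via a cut-off, exactly as in Lemma~\ref{thm:vfracestim} and Corollary~\ref{thm:Compact}.

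For the first step, I recall that by Theorem~\ref{thm:GlobExist} the identity $u_x=G+\Lambda^\alpha\rho$ holds in $\mathcal S'(\R)$ for a.e.\ $t>0$. Since $\Lambda^\alpha$ is homogeneous of degree $\alpha$, a direct computation with the rescaling \eqref{recal:sol} gives, for every $\lambda\geqslant 1$ and a.e.\ $t>0$,
\[
\partial_x u^\lambda(\cdot,t)=G^\lambda(\cdot,t)+\Lambda^\alpha\big(\lambda^{-\alpha}\rho^\lambda(\cdot,t)\big)\qquad\text{in }\mathcal S'(\R).
\]
Given $q\in(1/\alpha,\infty)$, I then choose $p\in(1,1/\alpha)$ with $1/q=1/p-\alpha$ and apply Lemma~\ref{thm:uFracEst} to the pair $\big(G^\lambda(\cdot,t),\ \lambda^{-\alpha}\rho^\lambda(\cdot,t)\big)$, obtaining $\Lambda^{1-\alpha}u^\lambda(\cdot,t)\in L^q(\R)$ together with
\[
\|\Lambda^{1-\alpha}u^\lambda(\cdot,t)\|_q\leqslant C\big(\|G^\lambda(\cdot,t)\|_p+\lambda^{-\alpha}\|\rho^\lambda(\cdot,t)\|_q\big)\leqslant C\big(t^{-1+1/p}+t^{-1+1/q}\big),
\]
where in the last step I used $\lambda^{-\alpha}\leqslant 1$ for $\lambda\geqslant 1$ together with the decay bounds of Lemma~\ref{thm:LambdaEstim1}. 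For $t\in[t_1,t_2]$ this right-hand side is bounded uniformly in $\lambda\geqslant 1$, and Lemma~\ref{thm:LambdaEstim1} also gives $\|u^\lambda(\cdot,t)\|_\infty\leqslant\|u_0\|_\infty$.

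For the second step, I fix $R>0$ and $\chi\in C_c^\infty(\R)$ with $\chi\equiv 1$ on $[-R,R]$; then $\chi u^\lambda(\cdot,t)\in L^q(\R)$ with norm $\leqslant C_R\|u_0\|_\infty$, and from $\Lambda^{1-\alpha}(\chi u^\lambda)=\chi\,\Lambda^{1-\alpha}u^\lambda+[\Lambda^{1-\alpha},\chi]u^\lambda$ one sees, using the singular-integral formula for $\Lambda^{1-\alpha}$ and $\|u^\lambda\|_\infty\leqslant\|u_0\|_\infty$, that the commutator is bounded in $L^q(\R)$ uniformly in $\lambda$: its kernel $|x-y|^{-(2-\alpha)}$ is locally integrable (here $1-\alpha<1$ is used) and outside $\operatorname{supp}\chi$ only its tail survives, of size $|x|^{-(2-\alpha)}$, which lies in $L^q(\R)$ since $(2-\alpha)q>1$. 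Applying Lemma~\ref{thm:vfracestim} to $\chi u^\lambda(\cdot,t)$ and using $\|u^\lambda(\cdot,t)\|_{H^{1-\alpha}_q([-R,R])}\leqslant\|\chi u^\lambda(\cdot,t)\|_{H^{1-\alpha}_q(\R)}$ then yields the claimed bound, uniform in $\lambda\geqslant 1$ and in $t\in[t_1,t_2]$; this is precisely the cut-off already carried out in the proof of Corollary~\ref{thm:Compact}. The only point that is not pure bookkeeping is this uniform commutator estimate, and I do not expect it to be a real obstacle. (In the borderline range $q\in(1/\alpha,1/(1-\alpha)]$, possible only for $\alpha>1/2$, where no admissible $p$ exists in Lemma~\ref{thm:uFracEst}, I would first run the argument for some $q'>1/(1-\alpha)$ and then invoke the embedding $H^{1-\alpha}_{q'}([-R,R])\hookrightarrow H^{1-\alpha}_q([-R,R])$, valid on the bounded interval.)
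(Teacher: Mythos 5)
Your proof is correct and follows the same route as the paper's: rescale the constraint $u_x=G+\Lambda^\alpha\rho$, feed the decay bounds of Lemma~\ref{thm:LambdaEstim1} into Lemma~\ref{thm:uFracEst} to control $\Lambda^{1-\alpha}u^\lambda$ in $L^q$, and then localize via Lemma~\ref{thm:vfracestim}. You are in fact more careful than the paper's two-line argument on exactly the points where it is loose: the paper takes $p=q/(q-1)$, which does not satisfy the relation $1/q=1/p-\alpha$ required by Lemma~\ref{thm:uFracEst} (your choice of $p$, and your treatment of the borderline range $q\in(1/\alpha,1/(1-\alpha)]$ for $\alpha>1/2$, is what is actually needed), and it writes $\|u^\lambda(t)\|_q$ as though $u^\lambda(t)\in L^q(\R)$ when only the $L^\infty$ bound is available, so your explicit cut-off with the commutator estimate is the honest way to land in $H^{1-\alpha}_q([-R,R])$.
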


\begin{proof}
    Let $p = q/(q-1)$. Applying Lemmas \ref{thm:uFracEst} and \ref{thm:vfracestim}   we have
    \begin{nalign}\nonumber
        \|u^\lambda(t)\|_{H^{1-\alpha}_q([-R,R])} &\leqslant C \left(\| u^\lambda (t)\|_q+\|\Lambda^{1-\alpha} u^\lambda (t) \|_q\right)  \\
        &\leqslant C\big(\| u^\lambda (t)\|_q+ \|G^\lambda(t)\|_p + \|\rho^\lambda(t)\|_q\big).
    \end{nalign}
    By Lemma \ref{thm:LambdaEstim1} the right-hand side is uniformly bounded with respect to $\lambda>0$ and $t \in [t_1,t_2]$ for all $t_1,t_2\in(0,\infty)$.  
\end{proof}

\begin{lemma}\label{lem:uT:lambda}
    The sequence $\{\partial_t u^\lambda\}_{\lambda\geq 1}$ is bounded in the space $L^\infty \big([t_1,t_2] , W^{1,2}(\R)^*   \big)$ for all $t_1,t_2 \in (0,\infty)$.
\end{lemma}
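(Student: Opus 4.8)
The plan is to mimic the proof of Lemma~\ref{thm:UTEst}, now working with the rescaled functions and replacing the $L^1$-based bookkeeping used there by an $L^2$-based one, which is possible thanks to the decay estimates gathered in Lemma~\ref{thm:LambdaEstim1}.

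First I would write down the equation satisfied by $u^\lambda$. Performing the change of variables $(x,t)\mapsto(\lambda x,\lambda t)$ in the weak form of the equation $u_t+u G+\Lambda^\alpha(\rho u)=0$ (which is the $\varepsilon\searrow 0$ limit of the weak formulation \eqref{weak:reg2} obtained in the proof of Theorem~\ref{thm:GlobExist}) and using the homogeneity $\Lambda^\alpha\big[f(\lambda\,\cdot)\big](x)=\lambda^\alpha\big(\Lambda^\alpha f\big)(\lambda x)$, one checks that
\begin{equation*}
 \partial_t u^\lambda = -\,u^\lambda G^\lambda-\lambda^{-\alpha}\,\Lambda^\alpha\big(\rho^\lambda u^\lambda\big)
\end{equation*}
holds in the sense of distributions on $\R\times(0,\infty)$: the continuity equations for $\rho^\lambda$ and $G^\lambda$ are scale invariant, while the nonlocal term picks up the factor $\lambda^{-\alpha}$, which is bounded by $1$ for $\lambda\geqslant 1$.

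Next I would test this identity against an arbitrary $\varphi\in C_c^\infty(\R)$ and move $\Lambda^\alpha$ onto $\varphi$ by self-adjointness, obtaining
\begin{equation*}
 \int_\R \partial_t u^\lambda(\cdot,t)\,\varphi\,\dx = -\int_\R u^\lambda G^\lambda\,\varphi\,\dx-\lambda^{-\alpha}\int_\R \rho^\lambda u^\lambda\,\Lambda^\alpha\varphi\,\dx .
\end{equation*}
Since $\alpha\in(0,1)$ gives $|\xi|^{2\alpha}\leqslant 1+|\xi|^2$, Plancherel's theorem yields $\|\Lambda^\alpha\varphi\|_2\leqslant\|\varphi\|_{W^{1,2}(\R)}$. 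Combining this with the uniform bounds $\|u^\lambda(t)\|_\infty\leqslant\|u_0\|_\infty$ and $\|\rho^\lambda(t)\|_2+\|G^\lambda(t)\|_2\leqslant C t^{-1/2}$ from Lemma~\ref{thm:LambdaEstim1} and applying the H\"older inequality, one gets, for all $t\in[t_1,t_2]$ and all $\lambda\geqslant 1$,
\begin{equation*}
 \left|\int_\R \partial_t u^\lambda(\cdot,t)\,\varphi\,\dx\right|\leqslant C\,\|u_0\|_\infty\big(\|G^\lambda(t)\|_2+\|\rho^\lambda(t)\|_2\big)\|\varphi\|_{W^{1,2}(\R)}\leqslant C\big(t_1,\|u_0\|_\infty,M_\rho\big)\,\|\varphi\|_{W^{1,2}(\R)} .
\end{equation*}
By density of $C_c^\infty(\R)$ in $W^{1,2}(\R)$, the functional $\partial_t u^\lambda(\cdot,t)$ extends to an element of $W^{1,2}(\R)^*$ with norm bounded by $C(t_1,\|u_0\|_\infty,M_\rho)$, uniformly in $t\in[t_1,t_2]$ and $\lambda\geqslant 1$, which is the claim.

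I do not expect a genuine obstruction: the computation is the scaled analogue of Lemma~\ref{thm:UTEst}, and the only points deserving care are (i) tracking the $\lambda^{-\alpha}$ prefactor arising from the homogeneity of $\Lambda^\alpha$, which is precisely what makes the bound uniform for $\lambda\geqslant 1$, and (ii) noting that restricting to $t\geqslant t_1>0$ absorbs the $t^{-1/2}$ singularity of the decay estimates. The reason for using the dual space $W^{1,2}(\R)^*$ here, rather than $W^{2,1}(\R)^*$ as in Lemma~\ref{thm:UTEst}, is that it is the natural space to combine, via the Aubin--Lions--Simon lemma, with the bound of Lemma~\ref{thm:LambdaEstim2} and the compact embedding $H^{1-\alpha}_q([-R,R])\subseteq L^q([-R,R])$ in the subsequent proof of strong compactness of $\{u^\lambda\}_{\lambda\geqslant 1}$.
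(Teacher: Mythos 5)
Your proof is correct and follows essentially the same route as the paper, which at this point simply says ``repeat the reasoning from the proof of Lemma \ref{thm:UTEst}'': test $\partial_t u^\lambda$ against a test function, move $\Lambda^\alpha$ onto it, and invoke the uniform bounds from Lemma \ref{thm:LambdaEstim1} on $[t_1,t_2]$ with $t_1>0$. Your only (welcome) addition is to spell out the $L^2$-based bookkeeping ($\|\Lambda^\alpha\varphi\|_2\leqslant\|\varphi\|_{W^{1,2}(\R)}$ via Plancherel) needed to land in $W^{1,2}(\R)^*$ rather than the $W^{2,1}(\R)^*$ of Lemma \ref{thm:UTEst}, together with the correct tracking of the $\lambda^{-\alpha}$ prefactor, both of which the paper leaves implicit.
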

\begin{proof}
    Here, it suffices to repeat the reasoning from the proof of Lemma \ref{thm:UTEst}.
\end{proof}

We are in a position to prove the main result of this work on the asymptotic behavior of the rescaled solutions  \eqref{recal:sol}.

\begin{proof}[Proof of Theorem \ref{thm:asymp}]
Introducing the  new variables $y = x/\lambda$ and $s = t/\lambda$,
the rescaled functions \eqref{recal:sol} satisfy  the following counterparts of equations  \eqref{eq:URhoFractional} and \eqref{eq:URhoG}
\begin{nalign}
    \label{eq:URho2eqLambda}
    \rho^\lambda_s + (u^\lambda \rho^\lambda)_y &= 0, \\
    u^\lambda_s +u^\lambda (u_y^\lambda - \lambda^{-\alpha} \Lambda^\alpha \rho^\lambda) +\lambda^{-\alpha} \Lambda^\alpha (u^\lambda \rho^\lambda)&=0,
\end{nalign}
as well as
\begin{nalign}
    \label{eq:URhoGLambda}
    \rho^\lambda_s +  (u^\lambda \rho^\lambda)_y &= 0, \\
    G^\lambda_s + (u^\lambda G^\lambda)_y &= 0, \\
   u^\lambda_y - \lambda^{-\alpha} \Lambda^\alpha \rho^\lambda &=  G^\lambda.
\end{nalign}
Here, we impose   the initial conditions 
\begin{align}
    \rho_0^\lambda(x)= \lambda \rho_0(\lambda x), \quad 
    G_0^\lambda(x) =  \lambda G_0(\lambda x), 
    \quad u_0^\lambda(x) = u_0(\lambda x)
\end{align}
and we understand the corresponding initial value problems 
 in the weak sense:
\begin{nalign}
    \label{eq:URho2eqLambdaWeak}
   - \int_0^\infty \int_\R \rho^\lambda \varphi_s \dy \ds -\int_\R \rho^\lambda_0 \varphi(\cdot, 0) \dy &-  \int_0^\infty \int_\R \rho^\lambda u^\lambda \varphi_y \dy \ds = 0, \\
   - \int_0^\infty \int_\R u^\lambda \psi_s \dy \ds  -\int_\R u^\lambda_0 \psi (\cdot, 0) \dy &+\lambda^{-\alpha} \int_0^\infty \int_\R  u^\lambda\rho^\lambda  \Lambda^\alpha\psi \dy \ds \\ &+ \int_0^\infty \int_\R  u^\lambda(u_y^\lambda - \lambda^{-\alpha}\Lambda^\alpha \rho^\lambda)\psi \dy \ds= 0
\end{nalign}        
as well as
\begin{nalign}
    \label{eq:UrhoGLambdaWeak}
   - \int_0^\infty \int_\R \rho^\lambda \varphi_s \dy \ds -\int_\R \rho^\lambda_0 \varphi(\cdot, 0) \dy  - \int_0^\infty \int_\R \rho^\lambda u^\lambda \varphi_y \dy \ds &= 0, \\
   - \int_0^\infty \int_\R G^\lambda \psi_s \dy \ds -\int_\R G^\lambda_0 \psi(\cdot, 0) \dy - \int_0^\infty \int_\R G^\lambda u^\lambda \psi_y \dy \ds &= 0, \\
- \int_0^\infty \int_\R u^\lambda \zeta_y \dy \ds   -\lambda^{-\alpha}\int_0^\infty \int_\R \rho^\lambda \Lambda^\alpha \zeta \dy \ds  - \int_0^\infty \int_\R G^\lambda \zeta \dy \ds & = 0,
\end{nalign}
for all $\varphi, \psi,\zeta \in C_c^\infty\big(\R \times [0,\infty)\big)$.

    By Lemmas \ref{thm:LambdaEstim1}-\ref{lem:uT:lambda} 
    combined with Lemma \ref{thm:CompactEmbedding} and Corollary \ref{thm:Compact} and by the Aubin-Lions-Simon argument applied 
    in the same way as in the proof of the convergence in \eqref{weak-star:2}-\eqref{u:conv:strong}, we extract 
   a subsequence $\lambda_k \to \infty$ and we find the tuple $(\overline{u}, \overline{\rho}, \overline{G})$ such that 
    \begin{nalign}
        \label{eq:WeakLambdaConvergence2}
        \begin{matrix}
            \rho^{\lambda_k} \to \overline{\rho} \\
             G^{\lambda_k} \to \overline{G}
        \end{matrix}& & & \text{weakly in} \; \; L^r\big( [t_1, t_2], \Lp{} \big),  \\
        u^{\lambda_k} \to \overline{u}& & & \text{strongly in} \; \; C\big( [t_1, t_2], L^q([-R,R]) \big),
    \end{nalign} 
for each 
 $t_1,t_2\in (0,\infty)$, every $R>0$ and $r,p\in(1,\infty)$ and $q\in\big( 1/\alpha, \infty\big)$.

    Now, we pass to the limit in equations \eqref{eq:URho2eqLambdaWeak} and \eqref{eq:UrhoGLambdaWeak} and we begin with the terms containing initial conditions.
     Since $\rho_0, G_0\in \Lp{1}$ and $\varphi(\cdot,0)\in C^\infty_c(\R)$ we may change variables  and apply the Lebesgue  dominated convergence theorem to conclude
    \begin{nalign}
        \int_\R \rho_0^{\lambda_k}(x) \varphi(x,0) \dx = \int_\R \rho_0(y) \varphi \left(\frac{y}{\lambda_k},0\right) \dy \to \varphi(0,0) M_\rho
    \end{nalign}
    and similarly
    \begin{nalign}
        \int_\R G_0^{\lambda_k}(x) \psi(x,0) \dx \to \psi(0,0) M_G,
    \end{nalign}
   with $M_\rho = \int_\mathbb{R} \rho_0(x) \dx$ and $M_G = \int_\mathbb{R} G_0 (x) \dx$. 
Moreover, we recall 
\begin{nalign}u_0=\partial^{-1}_x G_0+\partial^{-1}_x \Lambda^\alpha\rho_0,\end{nalign}
where 
$$
\left(\partial^{-1}_x G_0\right)(\lambda_k x) = \int_{-\infty}^{\lambda_k x} G_0(y)\dy \to 
\begin{cases}
    M_G& \text{for} \;\; x>0,\\
    0 & \text{for} \;\; x<0
\end{cases}
$$
and where $\partial^{-1}_x \Lambda^\alpha\rho_0 = K_\alpha*\rho_0\in \Li$ 
(with the function $K_\alpha$ defined at the end of the proof of Lemma \eqref{lem:UEstim})
satisfies
$$
\left(\partial^{-1}_x \Lambda^\alpha\rho_0\right)(\lambda_k x) \to 0 \quad \text{as} \quad\lambda_k\to \infty.
$$
Thus, by the Lebesgue dominated convergence theorem,
\begin{nalign}
        \int_\R u_0^{\lambda_k}(x) \varphi(x,0) \dx 
    =     \int_\R u_0(\lambda_k x) \varphi(x,0) \dx
        \to M_G \int_0^\infty\varphi(u,0) \dy.
    \end{nalign}
   
    Next, choosing $T>0$ such that $\varphi(x,t)=0$ for $(x,t)\in \R\times (T,\infty)$ and using Lemma~\ref{thm:LambdaEstim1} with $p\in(1,\infty)$ and $p' = p/(p-1)$, we have 
    \begin{nalign}
        \left|  \lambda^{-\alpha}_k \int_0^\infty \int_\R  u^{\lambda_k} \rho^{\lambda_k} \Lambda^\alpha \varphi \dy\ds\right| & \\ 
        \leqslant |\lambda_k|^{-\alpha} \|u_0\|_\infty \sup_{s\in[0,T]} \| \Lambda^\alpha \varphi(&\cdot, s) \|_{p/(p-1)} C \int_0^T t^{-1 + \frac{1}{p}} \dt \to 0 \quad \text{as} \quad \lambda_k \to 0. 
    \end{nalign}
    We show analogously 
    \begin{nalign}
         \lambda^{-\alpha}_k \int_0^\infty \int_\R  \rho^{\lambda_k} \Lambda^\alpha \varphi \dy\ds
        \to 0 \quad \text{as} \quad \lambda_k \to 0. 
    \end{nalign}
    Next, for arbitrary $\delta >0$, we decompose (where $\varphi(x,t) = 0$ for $x\in \mathbb{R}$ and $t\geqslant T$)
    \begin{nalign}
        \int_0^\infty \int_\mathbb{R} \rho^\lambda \varphi_s \dy \ds = \int_0^\delta \int_\mathbb{R} \rho^\lambda \varphi_s \dy \ds + \int_\delta^T \int_\mathbb{R} \rho^\lambda \varphi_s \dy \ds. 
    \end{nalign}
    By Lemma \ref{thm:LambdaEstim1} with some $p\in (1,\infty)$ and $p' = p/(p-1)$, we have
    \begin{nalign}
        \int_0^\delta \int_\mathbb{R} \rho^\lambda \varphi_s \dy \ds &\leqslant C(p, M_\rho) \sup_{s\in[0,\delta]} \| \varphi_s(\cdot,s)\|_{p'} \int_0^\delta s^{-1+\frac{1}{p}} \ds \\ &\leqslant C(p, M_\rho) \sup_{s\in[0,\delta]} \| \varphi_s(\cdot,s)\|_{p'} \delta^\frac{1}{p} \to 0 \quad \text{as} \quad \delta \to 0. 
    \end{nalign}
    On the other hand, by the weak convergence in \eqref{eq:WeakLambdaConvergence2}, we obtain
    \begin{nalign}
        \int_\delta^T \int_\mathbb{R}  \rho^{\lambda_k}  \varphi_s \dy \ds \to \int_\delta^T \int_\mathbb{R}  \overline{\rho}  \varphi_s \dy \ds \quad \text{as} \quad \lambda_k \to \infty.
    \end{nalign}
    Since $\delta>0$ is arbitrary small, these two relations imply
    \begin{nalign}
        \int_0^\infty \int_\mathbb{R}  \rho^{\lambda_k}  \varphi_s \dy \ds \to \int_0^\infty \int_\mathbb{R}  \overline{\rho}  \varphi_s \dy \ds \quad \text{as} \quad \lambda_k \to \infty.        
    \end{nalign}
    Similarly, we may pass to the limit as $\lambda_k \to \infty$ in the following terms of system \eqref{eq:URho2eqLambdaWeak}-\eqref{eq:UrhoGLambdaWeak} 
    \begin{nalign}
        \int_0^\infty \int_\mathbb{R}  G^{\lambda_k}  \varphi \dy \ds &\to \int_0^\infty \int_\mathbb{R}  \overline{G}  \varphi \dy \ds, \\
        \int_0^\infty \int_\mathbb{R}  G^{\lambda_k}  \varphi_s \dy \ds &\to \int_0^\infty \int_\mathbb{R}  \overline{G}  \varphi_s \dy \ds, \\
        \int_0^\infty \int_\mathbb{R}  u^{\lambda_k}  \varphi_s \dy \ds &\to \int_0^\infty \int_\mathbb{R}  \overline{u}  \varphi_s \dy \ds, \\
        \int_0^\infty \int_\mathbb{R}  u^{\lambda_k}  \varphi_s \dy \ds &\to \int_0^\infty \int_\mathbb{R}  \overline{u}  \varphi_y \dy \ds. \\           
    \end{nalign}
    Next, we deal with the integrals 
    \begin{nalign}
        \label{eq:IntLam}
        \int_0^\infty \int_\mathbb{R} \rho^{\lambda_k} u^{\lambda_k} \varphi_y \dy \ds \quad \text{and} \quad \int_0^\infty \int_\mathbb{R} G^{\lambda_k} u^{\lambda_k} \varphi_y \dy \ds
    \end{nalign}
    in the analogous way, decomposing the interval with respect to $t\in [0,\delta]$ and $t\in(\delta, T]$ and using the weak convergence of $\varphi^{\lambda_k}$ and $G^{\lambda_k}$ together with the strong convergence of $u^{\lambda_k}$ from \eqref{eq:WeakLambdaConvergence2}. 
   
    Finally, notice that
    $
       G^{\lambda_k}=   u_y^{\lambda_k} - {\lambda_k}^{-\alpha} \Lambda^\alpha \rho^{\lambda_k}.
    $
    Hence, the passage to the limit $\lambda_k \to \infty$ in the third equation of \eqref{eq:UrhoGLambdaWeak} implies $G^{\lambda_k} \to \overline{G} = \overline{u}_y$ weakly in $L^r\big([t_1, t_2], \Lp{}\big)$. 
    Thus, analogously as in the case of the integrals in \eqref{eq:IntLam}, we obtain that
    \begin{nalign}
        \int_0^\infty \int_\mathbb{R} &u^{\lambda_k} \left( u^{\lambda_k}_y - \lambda^{-\alpha} \Lambda ^\alpha\rho^{\lambda_k }\right) \varphi \dy \ds \\ 
        &= \int_0^\infty \int_\mathbb{R} u^{\lambda_k} G^{\lambda_k} \varphi \dy \ds \to \int_0^\infty \int_\mathbb{R} \overline{u}\overline{u}_y \varphi \dy \ds \quad \text{as} \quad \lambda_k \to \infty.
    \end{nalign}
  
    Summarizing, passing to the limit with $\lambda_k \to \infty$ in formulas \eqref{eq:URho2eqLambdaWeak} and \eqref{eq:UrhoGLambdaWeak}, we obtained
    equations 
    \begin{nalign} \label{u:as:3}
   - \int_0^\infty \int_\R \overline\rho\varphi_s \dy \ds - M_\rho\varphi(0, 0) &-  \int_0^\infty \int_\R \overline\rho \overline u \varphi_y \dy \ds = 0, \\
   - \int_0^\infty \int_\R \overline u \varphi_s \dy \ds  -M_G\int_{0}^\infty  \varphi(y,0) \dy &- \int_0^\infty \int_\R  \overline u \overline u_y \varphi \dy \ds = 0,
    \end{nalign}
    and 
    \begin{nalign}
       - \int_0^\infty \int_\R \overline \rho \varphi_s \dy \ds -M_\rho\varphi(0, 0)   - \int_0^\infty \int_\R \overline\rho \overline u \varphi_y \dy \ds &= 0, \\
       - \int_0^\infty \int_\R \overline G \varphi_s \dy \ds -M_G\varphi(0, 0)  - \int_0^\infty \int_\R \overline G \overline u \varphi_y \dy \ds &= 0, \\
       - \int_0^\infty \int_\R \overline u \varphi_y \dy \ds   - \int_0^\infty \int_\R \overline G \varphi \dy \ds & = 0
    \end{nalign}
    for each $\varphi \in C_c^\infty \big(\mathbb{R} \times [0,\infty)\big)$.
    Notice that these are  the weak formulations of the systems
    \begin{nalign}
        \overline{\rho}_s + (\overline{u}\overline{\rho})_y &= 0, \\
        \overline{u}_s + \overline{u}\overline{u}_y&= 0,
    \end{nalign}
    and 
    \begin{nalign} \label{eq:Limit2eqU}
        \overline{\rho}_s + \left(\overline{u}\overline{\rho} \right)_y &= 0, \\
        \overline{G}_s + \left(\overline{u}\overline{G}\right)_y& = 0, \\
        \overline{u}_y &= \overline{G}.
    \end{nalign}
    with the initial conditions 
    \begin{nalign}\label{eq:Limit2eqU:ini}
        \overline\rho(y,0)  = M_\rho \delta_0,  \qquad \overline{G}(y,0)  = M_G \delta_0, \qquad \overline{u}(y,0) = \begin{cases}
            0, & y\leqslant 0, \\
            M_G, & y>  0,
        \end{cases}
    \end{nalign}
    where $\delta_0$ denoted the usual Dirac measure. Below, we will find an explicit unique solutions of these initial value problems. 
  
    Indeed,  note  that $\overline{u} \in L^\infty\big(\mathbb{R} \times (0,\infty)\big)$ and $\overline{u}_y =\overline{G}\in L^r\big([t_1, t_2], \Lp{}\big)$.
    Hence, by \cite[Cor.~8.10]{Brezis}, function $\overline{u}^2\in L^r\big([t_1,t_2], W^{1,p}(\Omega)\big)$ and, moreover,  for each $\varphi \in C_c^\infty$ 
    \begin{align}
        \int_0^\infty \int_\R \overline{u}\overline{u}_y \varphi  \dy \ds = -\int_0^\infty\int_\R \frac{\overline{u}^2}{2} \varphi_y \dy \ds.
    \end{align} 
    Thus, the second equation in \eqref{u:as:3} means $\overline{u}\in L^\infty (\R\times [0, \infty)) $ is a weak solution of the initial value problem for the iniviscid Burgers equation 
    \begin{nalign}
        \label{eq:ULimForm}
        \overline{u}_s + \frac{(\overline{u}^2)_y}{2}&= 0, \qquad \overline{u}_0(y) = 
        \begin{cases}
            0, &  y \leqslant 0, \\ 
            M_G, & y > 0
        \end{cases}
    \end{nalign}
    with $M_G = \int_\mathbb{R} G_0 (x) \dx > 0$. 
    By the estimate of $G$ in \eqref{thm:DecayEstiamtes} with $p=\infty$, we obtain $\Ni{{G}(\cdot,t)} \leqslant C t^{-1}$ for all $t>0$ and this estimate is preserved by the rescaling (Lemma~\ref{thm:LambdaEstim1}) as well as it is kept after passing to the week limit in \eqref{eq:WeakLambdaConvergence}.
    Since, $\overline{G}=\overline{u}_y$, we obtain  the Oleinik condition
    \begin{nalign}
        \Ni{\overline{u}_y(\cdot,s)} = \Ni{\overline{G}(\cdot,s)} \leqslant C s^{-1}\quad \text{for all}\quad s>0,
    \end{nalign}
    which implies (see {\it e.g.} \cite[Ch.~3.4.4]{MR2597943}) that  the function $\overline{u}$ is a unique entropy solution of problem~\eqref{eq:ULimForm} given by the explicit formula
    \begin{nalign}
        \label{eq:URarefaction}
        \overline{u}(y,s) = \begin{cases}
            0, & y\leqslant 0, \\
            \frac{y}{s}, & 0<y \leqslant M_G s, \\
            M_G, & y > M_G s,
        \end{cases}
    \end{nalign}
    which is called as a  {\it rarefaction wave}. 

 Using the relation $\overline{G}=\overline{u}_y$, we calculate
    \begin{nalign}
        \overline{G}(y,s) = 
        \begin{cases}
            0, & y\leqslant 0, \\
            \frac{1}{s}, & 0<y\leqslant M_G s, \\
            0, & y> M_G s.
        \end{cases}
    \end{nalign}

    Note that, for $M_\rho=M_G$, we obtain $\rho(x,t)=G(x,t)$ for almost all $(x,t)\in \R\times [0,\infty)$ because
    both functions satisfy the same linear initial value problems:  equations \eqref{eq:Limit2eqU}   supplemented with  initial conditions \eqref{eq:Limit2eqU:ini} and with 
         $\overline{u}$ given by  formula \eqref{eq:URarefaction}.
        For arbitrary $M_\rho>0$ and $M_G>0$, since the equations are linear, we put
        \begin{nalign}
        \overline{\rho}(y,s) = \frac{M_\rho}{M_G} \overline G(y,s)= \frac{M_\rho}{M_G}
        \begin{cases}
            0, & y\leqslant 0, \\
            \frac{1}{s}, & 0<y\leqslant M_G s, \\
            0, & y> M_G s.
        \end{cases}
    \end{nalign}
\end{proof}


\bibliographystyle{siam}
\bibliography{Library}

\printindex


\end{document}